%
%
%
%
%

\documentclass[reqno]{amsart}

\pdfoutput=1

\usepackage[alphabetic]{amsrefs}
\usepackage{amsmath}
\usepackage{amsfonts}
\usepackage{amssymb}
\usepackage{enumerate}
\usepackage{amstext}
\usepackage{amsbsy}
\usepackage{amsopn}
\usepackage{bbm,amsthm}
\usepackage{amscd}
\usepackage[pdftex]{color}
\usepackage{amsxtra}
\usepackage{upref}
\usepackage{epstopdf}
\usepackage{graphicx,color}
\usepackage{hyperref}
\usepackage{longtable}
\usepackage{graphicx}
\usepackage[francais, english]{babel}

\DeclareFontFamily{OML}{rsfs}{\skewchar\font'177}
\DeclareFontShape{OML}{rsfs}{m}{n}{ <5> <6> rsfs5 <7> <8> <9> rsfs7
  <10> <10.95> <12> <14.4> <17.28> <20.74> <24.88> rsfs10 }{}
\DeclareMathAlphabet{\mathfs}{OML}{rsfs}{m}{n}

\newtheorem{theorem}{Theorem}
\newtheorem{lemma}[theorem]{Lemma}
\newtheorem{proposition}[theorem]{Proposition}

\theoremstyle{definition}

\theoremstyle{remark}
\newtheorem{remark}[theorem]{\bf Remark}

\numberwithin{equation}{section}
\numberwithin{theorem}{section}

\newcommand{\intav}[1]{\mathchoice {\mathop{\vrule width 6pt height 3 pt depth  -2.5pt
\kern -8pt \intop}\nolimits_{\kern -6pt#1}} {\mathop{\vrule width
5pt height 3  pt depth -2.6pt \kern -6pt \intop}\nolimits_{#1}}
{\mathop{\vrule width 5pt height 3 pt depth -2.6pt \kern -6pt
\intop}\nolimits_{#1}} {\mathop{\vrule width 5pt height 3 pt depth
-2.6pt \kern -6pt \intop}\nolimits_{#1}}}

\newcommand{\intavl}[1]{\mathchoice {\mathop{\vrule width 6pt height 3 pt depth  -2.5pt
\kern -8pt \intop}\limits_{\kern -6pt#1}} {\mathop{\vrule width 5pt
height 3  pt depth -2.6pt \kern -6pt \intop}\nolimits_{#1}}
{\mathop{\vrule width 5pt height 3 pt depth -2.6pt \kern -6pt
\intop}\nolimits_{#1}} {\mathop{\vrule width 5pt height 3 pt depth
-2.6pt \kern -6pt \intop}\nolimits_{#1}}}



\newcommand{\un}{\underline}

\newcommand{\ve}{\varepsilon}

\newcommand{\wh}{\widehat}
\newcommand{\vt}{\vartheta}
\newcommand{\vf}{\varphi}

\newcommand{\R}{\mathbb{R}}
\newcommand{\N}{\mathbb{N}}

\newcommand{\Z}{\mathbb{Z}}

\renewcommand{\exp}[1]{{\rm exp}_{#1}}

\newcommand{\Hol}[1]{{\rm Hol}_{#1}}

\begin{document}


\title[Symbolic dynamics for 1--dim maps with nonuniform expansion]{Symbolic dynamics for one dimensional \\
maps with nonuniform expansion}

\author{Yuri Lima}
\thanks{The author is supported by Instituto Serrapilheira, grant Serra-1709-20498.}
\date{\today}
\keywords{Interval map, Markov partition, Pesin theory, symbolic dynamics}
\subjclass[2010]{37B10, 37D25, 37E05}

\address{Yuri Lima, Departamento de Matem\'atica, Universidade Federal do Cear\'a (UFC), Campus do Pici,
Bloco 914, CEP 60440-900. Fortaleza -- CE, Brasil}
\email{yurilima@gmail.com}

\begin{abstract}
Given a piecewise $C^{1+\beta}$ map of the interval, possibly with \mbox{critical} points and
discontinuities, we construct a symbolic model for invariant probability measures with nonuniform expansion
that do not approach the critical points and discontinuities exponentially fast almost surely.
More specifically, for each $\chi>0$ we construct a finite-to-one H\"older continuous map from
a countable topological Markov shift to the natural extension of the interval map, that codes the
lifts of all invariant probability measures as above with Lyapunov exponent greater than $\chi$ almost everywhere.
\end{abstract}

\maketitle


\section{Introduction}\label{Section-introduction}

The quadratic family $\{f_a\}_{0\leq a\leq 4}$ is the family of one dimensional interval
maps $f_a:[0,1]\to[0,1]$, $f_a(x)=ax(1-x)$. Although simple to describe, it exhibits
complicated dynamical behavior: for a set of parameters of positive Lebesgue measure, $f_a$
has an absolutely continuous invariant measure with positive Lyapunov exponents
\cite{Jakobson,BenedicksCarleson85,BenedicksCarleson91},
see also \cite{Yoccoz}.
The idea to prove this is to construct a partition of the interval with good symbolic properties that
allows to understand the orbit of the critical point $x=0.5$, so that for many parameters the
critical value $f_a(0.5)$ has positive Lyapunov exponent (this latter property is known as the
{\em Collet-Eckmann condition}).
This idea has far reaching applications, see e.g. \cite{Mane-CMP,Graczyk-Swiatek,Lyubich-denseness}.

\medskip
The present works goes in the reverse direction of the above idea: it considers piecewise $C^{1+\beta}$ maps $f:[0,1]\to[0,1]$
of the interval with positive Lyapunov exponent and constructs {\em finite-to-one H\"older continuous} symbolic
extensions of the maps. We require $f$ to satisfy the regularity conditions (A1)--(A3), that will be shortly described.
These conditions allow $f$ to have both critical points (where the first derivate vanishes) and
discontinuities (where the first derivative can explode), and include the quadratic family,
multimodal maps with non-flat critical points, piecewise continuous maps with discontinuities of polynomial type,
and combinations of these two classes, see figure \ref{figure-1dim} for examples.  

\begin{figure}[hbt!]\label{figure-1dim}
\centering
\def\svgwidth{9cm}
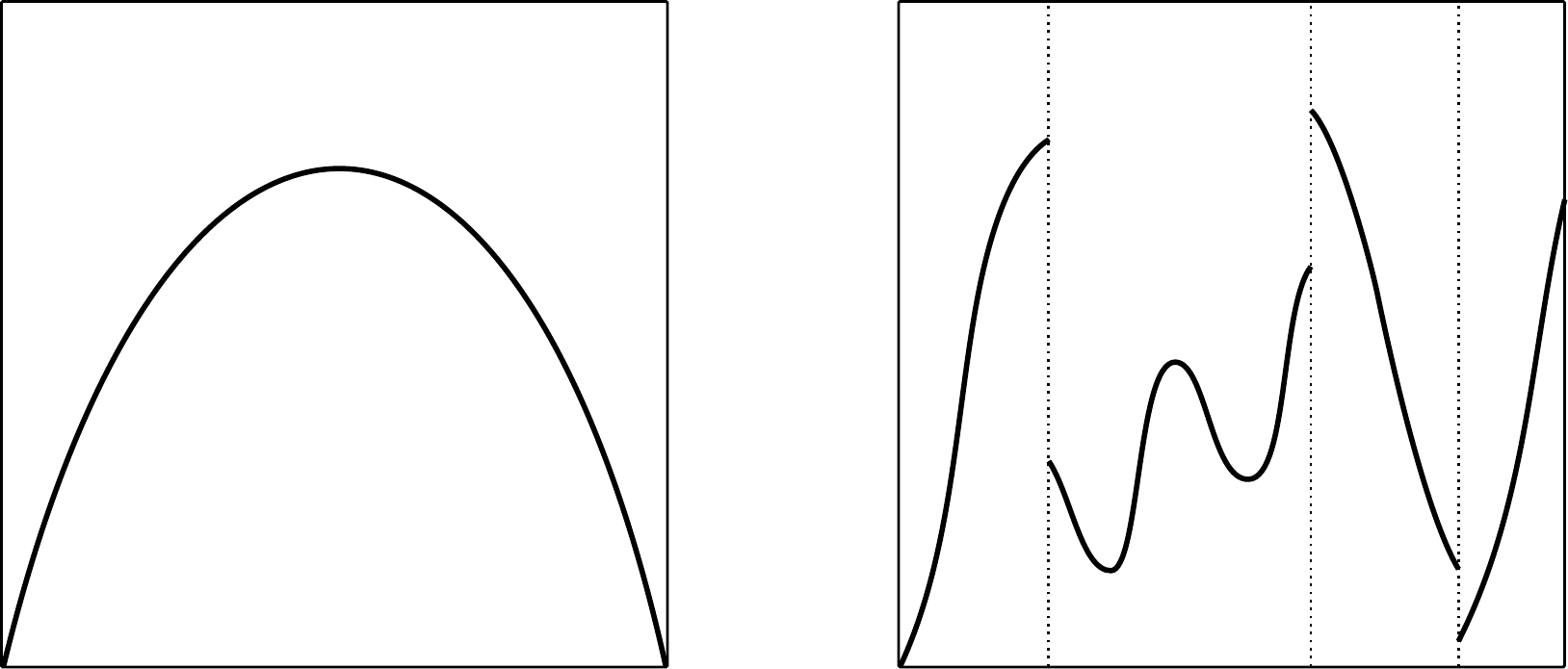
\caption{Examples of maps covered by our results.}
\end{figure}

\medskip
Our main result is the construction of a symbolic model for the natural extension
$\wh f:\wh{[0,1]}\to\wh{[0,1]}$ of $f$.
See Section \ref{Section-natural-extension} for the definition of the natural extension and its main properties.
Let us describe which maps we consider and which measures we are able to code.
For the sake of simplicity, we consider maps defined on the interval $M=[0,0.5]$, since this interval
has diameter less than one (and so we do not need to introduce multiplicative constants in the assumptions
(A1)--(A3) below).
Let $f:M\to M$ be a map with {\em discontinuity set} $\mathfs D$. We assume that $f$ is $C^{1+\beta}$
in the set $M\backslash\mathfs D$, for some $\beta\in(0,1)$. Let $\mathfs C:=\{x\in M\backslash\mathfs D:df_x=0\}$
denote the {\em critical set} of $f$.

\medskip
\noindent
{\sc Singular set}: The {\em singular set} of $f$ is $\mathfs S:=\mathfs C\cup\mathfs D$.

\medskip
We allow $\mathfs S$ to be infinite, e.g. when $f$ is the Gauss map.
Let $B(x,r)\subset M$ denote the ball with center $x$ and radius $r$.
We assume that $f$ satisfies the following properties.

\medskip
\noindent
{\sc Regularity of $f$:} There exist constants $a,\mathfrak K>1$ s.t. for all $x\in M$ with 
$x,f(x)\notin\mathfs S$ there is $\min\{d(x,\mathfs S)^a,d(f(x),\mathfs S)^a\}<\mathfrak r(x)<1$
s.t. for $D_x:=B(x,2\mathfrak r(x))$ and $E_x:=B(f(x),2\mathfrak r(x))$ the following holds:
\begin{enumerate}[.......]
\item[(A1)] The restriction of $f$ to $D_x$ is a diffeomorphism onto its image;
the inverse branch of $f$ taking $f(x)$ to $x$ is a well-defined diffeomorphism from 
$E_x$ onto its image.
%
\item[(A2)] For all $y\in D_x$ it holds $d(x,\mathfs S)^a\leq |df_y|\leq d(x,\mathfs S)^{-a}$; for
all $z\in E_x$ it holds $d(x,\mathfs S)^a\leq |dg_z|\leq d(x,\mathfs S)^{-a}$,
where $g$ is the inverse branch of $f$ taking $f(x)$ to $x$.
\item[(A3)] For all $y,z\in D_x$ it holds $|df_y-df_z|\leq\mathfrak K|y-z|^\beta$;
for all $y,z\in E_x$ it holds $|dg_y-dg_z|\leq\mathfrak K|y-z|^\beta$.
\end{enumerate}

\medskip
Now we describe the measures that we code. We borrow
the notation from \cite{Lima-Matheus}. Let $\mu$ be an $f$--invariant probability measure.

\medskip
\noindent
{\sc $f$--adapted measure:} The measure $\mu$ is called {\em $f$--adapted} if $\log d(x,\mathfs S)\in L^1(\mu)$.
A fortiori, $\mu(\mathfs S)=0$.

\medskip
\noindent
{\sc $\chi$--expanding measure:} Given $\chi>0$, the measure $\mu$
is called {\em $\chi$--expanding} if $\lim_{n\to\infty}\tfrac{1}{n}\log |df_x^n|>\chi$ for 
$\mu$--a.e. $x\in M$.

\medskip
The next theorem is the main result of this paper. Below, $\widehat{f}:\widehat{M}\to\widehat{M}$
is the natural extension of $f$, and $\widehat\mu$ is the lift of $\mu$, see Subsection
\ref{Section-natural-extension} for the definition. 

\begin{theorem}\label{Thm-Main}
Assume that $f$ satisfies {\rm (A1)--(A3)}. For all $\chi>0$, there exists a countable topological
Markov shift $(\Sigma,\sigma)$ and $\pi:\Sigma\to\wh M$ H\"older continuous s.t.:
\begin{enumerate}[{\rm (1)}]
\item $\pi\circ\sigma=\wh f\circ\pi$.
\item $\pi[\Sigma^\#]$ has full $\wh\mu$--measure for every $f$--adapted $\chi$--expanding measure $\mu$.
\item For all $\wh{x}\in \pi[\Sigma^\#]$, the set $\{\un v\in\Sigma^\#:\pi(\un v)=\wh x\}$ is finite.
\end{enumerate}
\end{theorem}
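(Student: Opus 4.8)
The plan is to follow the now-standard strategy, pioneered by Sarig for surface diffeomorphisms and adapted by Lima--Matheus, Lima--Sarig and others, of constructing a coarse-grained ``Pesin chart'' structure along orbits and then extracting a countable Markov shift via a locally finite cover. First I would set up the local charts: for a point $\wh x=(x_n)_{n\le 0}$ in the natural extension whose orbit is $\chi$-expanding and $f$-adapted, the hypotheses (A1)--(A3) give, on the scale $\mathfrak r(x_n)$ controlled by $d(x_n,\mathfs S)$, a uniformly $C^{1+\beta}$ inverse branch with derivative bounded in terms of $d(x_n,\mathfs S)^{\pm a}$. The $f$-adaptedness ($\log d(x,\mathfs S)\in L^1(\mu)$) together with the Birkhoff and Oseledets theorems ensures that along $\mu$-a.e.\ orbit these scales decay subexponentially, so one can define a ``first regularity'' parameter $q(\wh x)$ measuring the largest radius on which an $\ve$-Pesin block estimate holds, and $q$ is tempered (varies subexponentially along the orbit). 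Because the dynamics on $M$ is one-dimensional and expanding, there is no stable direction to track; the ``charts'' are simply intervals $\Psi_{\wh x}:(-q(\wh x),q(\wh x))\to M$ around $x_0$ on which $f$ and its relevant inverse branch are uniformly hyperbolic and uniformly $C^{1+\beta}$ in the rescaled coordinate.

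Next I would discretize. Replace $q(\wh x)$ by the largest number of the form $e^{-k/3}$ (or some fixed geometric scale) below it, and similarly discretize the finitely many analytic data attached to a chart (the derivative of the inverse branch at $x_0$, the distortion constant, the distance to $\mathfs S$) up to a multiplicative error $e^{\pm\ve}$. This yields a countable alphabet $\A$ of ``$\ve$-double charts'' $v=\Psi_{\wh x}^{p,q}$. One then shows the \emph{discreteness/local finiteness} property: for each fixed $q$ only finitely many values of the other parameters occur with that $q$, which is where the polynomial control in (A1)--(A3) (exponent $a$) and the $f$-adaptedness are used crucially. The edge relation $v\to w$ on $\A$ is the ``$\ve$-overlap + Pesin-compatibility'' condition of Sarig: $\Psi_w$ should be the image of $\Psi_v$ under the rescaled $f$, with the discretized parameters shrinking at most by the allowed factor. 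This defines $(\Sigma,\sigma)$ as the topological Markov shift on $\A$ with these transitions, and $\Sigma^\#$ is the recurrent part (sequences with a symbol repeating infinitely often in both time directions).

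Then I would build $\pi$. Given an admissible bi-infinite sequence $\un v\in\Sigma$, the chain of charts $\{\Psi_{v_n}\}$ defines a nested sequence of intervals (the ``admissible manifolds''); by the uniform expansion on the charts this intersection shrinks to a single point $\wh f$-history, giving $\pi(\un v)\in\wh M$, and uniform $C^{1+\beta}$ distortion estimates make $\pi$ Hölder continuous and satisfy $\pi\circ\sigma=\wh f\circ\pi$. For item (2), the temperedness and Borel--Cantelli arguments show that a.e.\ orbit of an $f$-adapted $\chi$-expanding $\mu$ is \emph{shadowed} by some chart chain with infinitely many repetitions of a symbol in both directions, so its lift lies in $\pi[\Sigma^\#]$ and this set has full $\wh\mu$-measure. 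For item (3), the finiteness of fibers over $\pi[\Sigma^\#]$, one proves an ``inverse theorem'': if two chart chains shadow the same orbit then at recurrent times the charts are forced to be within a bounded combinatorial distance, so the number of admissible chains shadowing a given $\wh x$ and lying in $\Sigma^\#$ is finite — the key technical input is again the one-dimensional uniform-expansion geometry, which pins down the center point once the scale is fixed.

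The main obstacle I anticipate is handling the \emph{singular set} $\mathfs S$ (critical points \emph{and} discontinuities where $|df|$ may blow up) uniformly. Unlike the diffeomorphism setting, here the ``size'' of a good chart at $x$ genuinely degenerates as $x\to\mathfs S$, and the distortion constant $\mathfrak K$ interacts with the polynomial rate $a$; making the discreteness/local-finiteness property survive this requires a careful choice of how $\mathfrak r(x)$, $d(x,\mathfs S)$, and the Pesin scale are intertwined, and it is precisely here that the $f$-adapted hypothesis (rather than mere $\mu(\mathfs S)=0$) is indispensable, so that $\log d(x_n,\mathfs S)$ is summable-on-average and the charts do not shrink too fast to be shadowed. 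Establishing that all three families of data can be simultaneously discretized into a \emph{countable} alphabet with the local finiteness needed for the Markov-shift construction is the crux; everything else is an adaptation of the expanding-case machinery to the natural extension.
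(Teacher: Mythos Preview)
Your overall strategy is essentially the one carried out in the paper: Pesin charts on the natural extension controlled by $d(x_n,\mathfs S)$ via (A1)--(A3), a tempered size parameter, discretization to a countable alphabet $\mathfs A$ with local finiteness, an edge relation, shadowing for full measure, and an inverse theorem. Two minor points: since the stable direction is trivial you only need single charts $\Psi_{\wh x}^p$, not double charts $\Psi_{\wh x}^{p,q}$; and the edges are naturally oriented by the \emph{inverse branches} (the contractions), not by $f$.

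There is, however, a genuine gap in your argument for item~(3). The inverse theorem does show that if $\un v,\un w\in\Sigma^\#$ shadow the same $\wh x$ then the parameters of $v_n$ and $w_n$ agree up to bounded ratio for every $n$. Combined with discreteness, this tells you that for each $n$ there are only finitely many candidates for $w_n$ once $v_n$ is fixed. But this does \emph{not} bound the number of sequences $\un w$: the chart-based TMS $(\Sigma,\sigma,\pi)$ is in general infinite-to-one, as the paper states explicitly. What the inverse theorem actually buys you is that the induced cover $\mathfs Z=\{Z(v):v\in\mathfs A\}$ of $\wh M$, where $Z(v)=\{\pi(\un v):\un v\in\Sigma^\#,\ v_0=v\}$, is \emph{locally finite}: each $Z(v)$ meets only finitely many other $Z(w)$.

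The missing step is the Bowen--Sina{\u\i} refinement. From the locally finite cover $\mathfs Z$ one builds a countable Markov partition $\mathfs R$ (using the symbolic Markov property of $s/u$--fibres inherited from Proposition~\ref{Prop-stable-manifolds}), and then defines a \emph{second} TMS $(\widehat\Sigma,\widehat\sigma)$ on the graph with vertex set $\mathfs R$ and edges $R\to S$ when $\wh f(R)\cap S\neq\emptyset$. It is $\widehat\pi:\widehat\Sigma\to\wh M$, not the chart map $\pi$, that is finite-to-one on $\widehat\Sigma^\#$; the bound comes from the local finiteness of $\mathfs R$ relative to $\mathfs Z$ together with the product structure of the rectangles. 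Without this refinement your proposal does not deliver item~(3).
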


Above, $\Sigma^\#$ is the {\em recurrent set} of $\Sigma$; it carries all $\sigma$--invariant
probability measures, see Section \ref{Section-preliminaries}.
Therefore we are able to code simultaneously all the measures with nonuniform expansion
greater than $\chi$ almost everywhere that do not
approach the singular set exponentially fast. 

\medskip
It is important to make some comments on the assumption of $f$--adaptedness.
By the Birkhoff ergodic theorem, if $\mu$ is $f$--adapted then $\lim_{n\to\infty}\tfrac{1}{n}\log d(f^n(x),\mathfs S)=0$
$\mu$--a.e. Ledrappier considered this latter condition for interval maps with critical points \cite{Ledrappier-acip},
where he used the terminology {\em non-degenerate measure}. Katok and Strelcyn implicitly used
that the Lebesgue measure is adapted to billiard maps and then used that 
$\lim_{n\to\infty}\tfrac{1}{n}\log d(f^n(x),\mathfs S)=0$ a.e. \cite{Katok-Strelcyn}.
For an invariant measure of a three dimensional flow with positive speed,
Lima and Sarig constructed a Poincar\'e section for which the respective Poincar\'e return map $f$
satisfies $\lim_{n\to\infty}\tfrac{1}{n}\log d(f^n(x),\mathfs S)=0$
almost surely with respect to the induced measure \cite{Lima-Sarig}, and Lima and Matheus
used the assumption of adaptability in their coding of billiard maps \cite{Lima-Matheus}. 
For one dimensional maps satisfying (A1)--(A3), if $\mathfs D=\emptyset$
and $\mathfs C$ is finite then every ergodic invariant measure that is not 
supported in an attracting periodic point satisfies $\lim_{n\to\infty}\tfrac{1}{n}\log d(f^n(x),\mathfs S)=0$
a.e. \cite{Przytycki-Lyapunov},
see also \cite[Appendix A]{Rivera-Letelier} for a proof that works under weaker assumptions.
It would be interesting to obtain the
same conclusion when $\mathfs S$ is finite.

\medskip
Now we make a comparison of our method with the one developed
by Hofbauer \cite{Hofbauer-intrinsic-I,Hofbauer-intrinsic-II}, known as {\em Hofbauer towers}.
These towers were first constructed to analyze measures of maximal entropy,
and provide a precise combinatorial description of one dimensional maps.
In comparison to Hofbauer's method, our method explores the nonuniform expansion of $\chi$--expanding
measures.
It constitutes the first implementation, for non-invertible systems,
of the recent constructions of Markovian symbolic dynamics for nonuniformly
hyperbolic systems \cite{Sarig-JAMS,Lima-Sarig,Lima-Matheus,Ben-Ovadia-2019}.
The novelty of the present paper is that,
contrary to Hofbauer's method, our construction has the following advantages:
\begin{enumerate}[$\circ$]
\item The extension map $\pi$ constructed in Theorem \ref{Thm-Main}
is {\em H\"older continuous}.
\item While Hofbauer's method only works in very specific higher dimensional cases
(see Section \ref{Section-related-literature} below), our method is more robust
and will be extended, in a forthcoming work, to higher dimensional maps such as
complex-valued functions, Viana maps, and general nonuniformly hyperbolic maps.
\end{enumerate}
In addition to these, we emphasize that $\mathfs S$ can be infinite.

\subsection{Applications}

Assume that $f$ satisfies (A1)--(A3) and has finite and positive topological entropy $h\in (0,+\infty)$. 
In this section, we discuss two applications of Theorem \ref{Thm-Main}:
\begin{enumerate}[$\circ$]
\item Estimates on the number of periodic points.
\item Understanding of equilibrium measures.
\end{enumerate}
The first can be obtained in great generality, but the second is subtler
due to the presence of discontinuities and eventual non-finiteness of $\mathfs S$.
As a matter of fact, the measure of maximal
entropy may not exist, even for $C^r$ maps \cite{Buzzi-smooth-interval-maps,Ruette-mme-interval}.
When $\mathfs D=\emptyset$ and $\mathfs C$ is finite, $f$ is called
a multimodal map. There are many results on ergodic and statistical properties
of equilibrium measures for one dimensional maps in various contexts, such as uniformly expanding maps
\cite{Lasota-Yorke,Smorodisnky-Bernoulli,Takahashi-isomorphisms,Bowen-interval},
piecewise continuous maps \cite{Hofbauer-intrinsic-I,Hofbauer-intrinsic-II},
piecewise monotone maps \cite{Hofbauer-Keller}, and multimodal maps
\cite{Ledrappier-acip,Denker-Keller-Urbanski,Bruin-Keller,Pesin-Senti,Bruin-Todd-1,Bruin-Todd-2,Iommi-Todd}.

\medskip
As mentioned above, Theorem \ref{Thm-Main} allows $\mathfs S$ to be infinite.
When there is an $f$--adapted measure of maximal entropy, we obtain
exponential estimates on the number of periodic points.

\begin{theorem}\label{Thm-Periodic-Points}
Assume $f$ satisfies {\rm (A1)--(A3)} with topological entropy $h\in(0,\infty)$.
If there is an $f$--adapted measure of maximal entropy, then 
$\exists C>0,p\geq 1$ s.t. $\#\{x\in[0,1]:f^{pn}(x)=x\}\geq C e^{hpn}$
for all $n\geq 1$.
\end{theorem}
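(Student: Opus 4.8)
The plan is to derive Theorem~\ref{Thm-Periodic-Points} from Theorem~\ref{Thm-Main} by pulling back a measure of maximal entropy to the symbolic side, passing to an irreducible component, and then counting periodic orbits there. First I would take an ergodic $f$--adapted measure of maximal entropy $\mu$; since $h>0$, $\mu$ is non-atomic and its Lyapunov exponent is positive (by Ruelle's inequality, $h=h_\mu(f)\le \int \log|df|\,d\mu$, so $\mu$ is $\chi$--expanding for some $\chi>0$ — one picks $\chi$ below the exponent). Applying Theorem~\ref{Thm-Main} with this $\chi$ produces $(\Sigma,\sigma)$ and the H\"older factor map $\pi:\Sigma\to\wh M$ with $\pi\circ\sigma=\wh f\circ\pi$, such that $\pi[\Sigma^\#]$ has full $\wh\mu$--measure. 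By the Jewett--Krieger-type lifting for countable Markov shifts (as in \cite{Sarig-JAMS,Lima-Sarig}), $\wh\mu$ lifts to a $\sigma$--invariant ergodic measure $\nu$ on $\Sigma$ carried by $\Sigma^\#$ with $h_\nu(\sigma)=h_{\wh\mu}(\wh f)=h_\mu(f)=h$; here I use that the natural extension preserves entropy and that $\pi$, being finite-to-one on $\Sigma^\#$, preserves entropy of measures supported there.

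Next I would localize to a single transitive component. Since $\nu$ is ergodic, it is supported on one irreducible component $\Sigma'\subseteq\Sigma$ of the Markov shift, and on that component $h_{\rm top}(\Sigma',\sigma)\ge h_\nu(\sigma)=h$. The Gurevich entropy theory for countable Markov shifts then gives, for the Gurevich entropy $h_G(\Sigma')\ge h$, the lower bound
\begin{equation*}
\liminf_{n\to\infty}\tfrac1n\log \#\{\un v\in\Sigma' : \sigma^n(\un v)=\un v,\ v_0=b\}\;=\;h_G(\Sigma')\;\ge\;h
\end{equation*}
for any state $b$ in the component; moreover for a suitable $b$ these periodic orbits lie in $\Sigma^\#$. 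Passing from $\liminf$ to a genuine lower bound $\ge Ce^{hpn}$ along an arithmetic progression of periods uses the standard fact that the relevant generating function (the "first return to $b$" series) has a dominant pole, so its coefficients along the period lattice $p\Z$ grow like $e^{hpn}$ up to a constant; equivalently one invokes that $h_G$ is realized as a limit along multiples of some period $p\ge 1$ (aperiodicity of the return structure after passing to $\sigma^p$). This is where the period $p$ in the statement comes from.

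Finally I would push these symbolic periodic points down to $M$. A periodic point $\un v\in\Sigma^\#$ of period $pn$ maps under $\pi$ to a point $\wh x\in\wh M$ with $\wh f^{pn}(\wh x)=\wh x$, whose zeroth coordinate $x\in[0,1]$ satisfies $f^{pn}(x)=x$. Since $\pi$ restricted to $\Sigma^\#$ is finite-to-one, distinct symbolic periodic orbits collapse to at most a bounded number of interval periodic points, so we only lose a multiplicative constant: $\#\{x\in[0,1]:f^{pn}(x)=x\}\ge C'e^{hpn}$. The main obstacle is the bookkeeping in the second step: one must ensure the periodic points counted by Gurevich entropy actually belong to $\Sigma^\#$ (so that part (3) of Theorem~\ref{Thm-Main} applies and the finite-to-one property can be used), and one must control the "period $p$" carefully — the component $\Sigma'$ may have period $p>1$, the Gurevich entropy is then only attained along $p\Z$, and this is precisely the source of the exponent $hpn$ rather than $hn$. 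Everything else (entropy invariance under natural extension, Ruelle's inequality forcing positive exponent, finite-to-one implying no entropy loss, and the standard combinatorics of loop generating functions on countable graphs) is routine given the machinery already in place.
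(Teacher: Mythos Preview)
Your proposal is correct and follows essentially the same route as the paper: pass to an ergodic $f$--adapted measure of maximal entropy, lift it to the natural extension and then to the countable Markov shift via Theorem~\ref{Thm-Main}, restrict to the transitive component carrying the lifted measure, count periodic orbits there using Gurevich's results, and push them down via the finite-to-one map $\pi$ (together with the bijection between periodic points of $f$ and of $\wh f$). The only place the paper is slightly cleaner is the Gurevich step: instead of arguing through generating functions and dominant poles, it invokes directly the theorem of Gurevi\v{c} that a transitive countable TMS supporting a measure of maximal entropy is positive recurrent, which gives the two-sided asymptotic $\#\{\un v\in\Sigma:\sigma^{pn}(\un v)=\un v,\ v_0=v\}\asymp e^{hpn}$ at once.
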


\begin{proof}
The set of periodic points of $f$ is in bijection with the set of periodic
points of its natural extension $\wh f$. Since $(\Sigma,\sigma)$ is a finite extension of $\wh f$,
the growth rate of periodic points of $\wh f$ is at least that of $\sigma$.
Let $\mu$ be an ergodic $f$--adapted measure of maximal entropy for $f$.
Its lift $\wh\mu$ is an ergodic measure of maximal entropy for $\wh f$ satisfying the assumptions of 
Theorem \ref{Thm-Main}. Proceeding as in \cite[\S 13]{Sarig-JAMS}, $\wh\mu$ lifts
to an ergodic measure of maximal entropy $\nu$ for $(\Sigma,\sigma)$.
By \cite{Gurevich-Topological-Entropy,Gurevich-Measures-Of-Maximal-Entropy},
$\nu$ is supported on a topologically transitive countable topological Markov shift,
and there is $p\geq 1$ s.t. for every vertex $v$ it holds
$\#\{\un v\in\Sigma: \sigma^{pn}(\un v)=\un v,v_0=v\}\asymp e^{hpn}$.
Hence, there exists $C>0$ s.t. $\#\{x\in[0,1]:f^{pn}(x)=x\}\geq C e^{hpn}$.
\end{proof}

The number $p$ equals the period of the transitive component supporting
the measure of maximal entropy. For surface maps, Buzzi gave conditions for which 
$p=1$ \cite{Buzzi-2019}, and we expect his method also applies here.

\medskip
Now we turn attention to equilibrium measures. 
Let $(X,\mathfs A,T)$ be a probability measure-preserving system, and $\vf:X\to\R$ a continuous potential.
The following definitions are standard.

\medskip
\noindent
{\sc Topological pressure:} The {\em topological pressure} of $\vf$ is
$P_{\rm top}(\vf):=\sup\{h_\mu(T)+\int\vf d\mu\}$,
where the supremum ranges over all $T$--invariant probability measures.

\medskip
\noindent
{\sc Equilibrium measure:} An {\em equilibrium measure} for $\vf$ is a $T$--invariant probability 
measure $\mu$ s.t. $P_{\rm top}(\vf)=h_\mu(T)+\int\vf d\mu$.

\medskip
Due to the presence of discontinuities, the topological pressure 
might be infinite (e.g. the Gauss map has infinite topological entropy),
and equilibrium measures may not exist.
Theorem \ref{Thm-Main} provides countability results for a class
of potentials and equilibrium measures:
we require $\vf$ to be a bounded H\"older continuous potential with
finite topological pressure, and consider equilibrium measures that are $f$--adapted
and $\chi$--expanding for some $\chi>0$. Call
a measure {\em expanding} if it is $\chi$--expanding for some $\chi>0$.
Observe that, when the Ruelle inequality applies,
every ergodic measure with positive metric entropy is expanding.

\begin{theorem}\label{Thm-equilibrium-states}
Assume that $f$ satisfies {\rm (A1)--(A3)}. Every equilibrium measure of a 
bounded H\"older continuous potential with finite topological pressure
has at most countably many $f$--adapted expanding
ergodic components. Furthermore, the lift to $\wh M$ of each such ergodic
component is Bernoulli up to a period.
\end{theorem}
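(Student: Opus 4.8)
The plan is to transfer the problem to the symbolic model provided by Theorem 1.3 and then invoke the well-developed thermodynamic formalism for countable topological Markov shifts. First I would fix $\chi>0$ and let $(\Sigma,\sigma)$, $\pi:\Sigma\to\wh M$ be as in Theorem 1.3 for this $\chi$. Let $\vf:M\to\R$ be a bounded Hölder continuous potential with $P_{\rm top}(\vf)<\infty$, and let $\mu$ be an equilibrium measure for $\vf$. The key reduction is that $\wh\vf:=\vf\circ\wh\pi$ (the lift of $\vf$ through the natural extension factor map $\wh M\to M$) is a bounded Hölder potential with the same pressure, and $\wh\mu$ is an equilibrium measure for $\wh\vf$, since the natural extension preserves entropy and integrals of lifted functions; this is standard and recorded in Section 2 on the natural extension. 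So it suffices to bound the number of $f$--adapted expanding ergodic components of $\wh\mu$ and to prove the Bernoulli property for each.

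Next I would take an ergodic component $\wh\nu$ of $\wh\mu$ that is $f$--adapted and $\chi$--expanding (here I use that $\chi$--expanding for \emph{some} $\chi>0$ combined with the fact that only countably many values of $\chi$, say $\chi=1/k$, need to be considered, so one may handle each $\chi=1/k$ separately and take a countable union at the end). By part (2) of Theorem 1.3, $\pi[\Sigma^\#]$ has full $\wh\nu$--measure, and by part (3) $\pi$ is finite-to-one there; hence, proceeding exactly as in \cite[\S 13]{Sarig-JAMS} (the standard lifting argument via the Borel section/inverse branch construction on the recurrent set), $\wh\nu$ lifts to a $\sigma$--invariant ergodic measure $\lambda$ on $\Sigma$ with $h_\lambda(\sigma)=h_{\wh\nu}(\wh f)$ and $\int\wh\vf\circ\pi\,d\lambda=\int\wh\vf\,d\wh\nu$. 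Therefore $\lambda$ is an equilibrium measure on $\Sigma$ for the bounded Hölder potential $\Phi:=\wh\vf\circ\pi$, and $P_{\rm top}^\Sigma(\Phi)\le P_{\rm top}(\vf)<\infty$ (with equality on the part of $\Sigma$ that is charged). Now apply the thermodynamic formalism for countable Markov shifts: by the theory of Buzzi--Sarig and Iommi--Jordan--Todd on equilibrium states for such shifts with summable variations and finite pressure \cite{Buzzi-Sarig} (or directly Sarig's positive recurrence results), $\Phi$ has at most countably many ergodic equilibrium measures on $\Sigma$, and each is carried by a topologically transitive subshift on which the RPF (Ruelle--Perron--Frobenius) machinery applies, hence each is, up to a finite period, isomorphic to a Bernoulli scheme. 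Since distinct ergodic components of $\wh\mu$ lift to distinct ergodic equilibrium measures of $\Phi$ and $\pi$ is finite-to-one on $\Sigma^\#$, the number of $f$--adapted expanding ergodic components of $\wh\mu$ with exponent $>\chi=1/k$ is at most countable; taking the union over $k\ge 1$ gives countability of all $f$--adapted expanding ergodic components.

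Finally, for the Bernoulli statement: each ergodic equilibrium measure $\lambda$ for $\Phi$ on $\Sigma$ is, up to a period $p$, Bernoulli, and the factor map $\pi$ restricted to the relevant transitive component is finite-to-one and entropy-preserving; since a finite-to-one factor of a Bernoulli system with the same entropy is Bernoulli (a consequence of Ornstein theory, used in the same way in \cite{Sarig-JAMS} and \cite{Lima-Matheus}), the lift $\wh\nu$ to $\wh M$ of the corresponding ergodic component of $\mu$ is Bernoulli up to the period $p$. I expect the main obstacle to be the bookkeeping in the lifting step — namely verifying that $\wh\nu$--a.e.\ point has exactly the right number of preimages in $\Sigma^\#$ and that the lifted measure $\lambda$ is genuinely an equilibrium state (not merely invariant) — which requires checking that no entropy or pressure is lost through $\pi$; this is handled by the finite-to-one property together with the Abramov/Rokhlin formula and the variational principle on $\Sigma$, but it is the step where one must be careful about the possibly infinite singular set $\mathfs S$ and the unboundedness of local inverse branches.
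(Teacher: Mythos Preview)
Your proposal is correct and follows essentially the same route as the paper: lift $\vf$ to $\wh\vf=\vf\circ\vt$ on $\wh M$, lift each $f$--adapted $\chi$--expanding ergodic component to an equilibrium measure for $\Phi=\wh\vf\circ\pi$ on $\Sigma$ via \cite[\S13]{Sarig-JAMS}, invoke \cite{Buzzi-Sarig} (at most one equilibrium state per transitive component, hence countably many), invoke \cite{Sarig-Bernoulli-JMD} for Bernoulli up to a period, and push this back down through the finite-to-one factor $\pi$; then take the union over $\chi=1/k$. The only cosmetic slip is your notation $\wh\vf=\vf\circ\wh\pi$ for what you correctly describe as composition with the zeroth-coordinate projection $\vt:\wh M\to M$; otherwise the argument and its ingredients match the paper's proof.
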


\begin{proof}
Let $\vf:M\to\R$ be bounded, H\"older continuous, with $P_{\rm top}(\vf)<\infty$.
We prove that, for each $\chi>0$, $\vf$ possesses at most countably
many $f$--adapted $\chi$--expanding equilibrium measures. The first part of the theorem
follows by taking the union of these measures for $\chi_n=\tfrac{1}{n}$, $n>0$. 

\medskip
Fix $\chi>0$. Let $\vt:\wh M\to M$ be the projection into the zeroth coordinate,
see Subsection \ref{Section-natural-extension}, and
define $\wh\vf:\wh M\to \R$ by $\wh\vf=\vf\circ\vt$.
Then $\wh\vf$ is bounded, H\"older continuous and has finite topological pressure
$P_{\rm top}(\wh\vf)=P_{\rm top}(\vf)$. Furthermore, $\mu$ is an ergodic equilibrium measure
for $\vf$ iff $\wh\mu$ is an ergodic equilibrium measure for $\wh\vf$. When $\mu$ is additionally
$f$--adapted and $\chi$--expanding, we can apply the procedure in  \cite[\S 13]{Sarig-JAMS}
and Theorem \ref{Thm-Main} to lift $\wh\mu$ to an ergodic equilibrium measure $\nu$
in $(\Sigma,\sigma)$. By ergodicity, $\nu$ is carried by a topologically transitive countable
topological Markov shift.
The potential associated to $\nu$ is $\Phi=\wh\vf\circ\pi$, which is bounded, H\"older continuous
and has finite topological pressure $P_{\rm top}(\Phi)=P_{\rm top}(\wh\vf)=P_{\rm top}(\vf)$. 
By \cite[Thm. 1.1]{Buzzi-Sarig}, each topologically transitive countable topological Markov shift
carries at most one equilibrium
measure for $\Phi$, hence there are at most countably many such $\nu$. This proves the
first part of the theorem. By \cite{Sarig-Bernoulli-JMD},
each such $\nu$ is Bernoulli up to a period. Since this latter property is preserved by finite-to-one
extensions, the same occurs to $\wh\mu$. This concludes the proof of the theorem.
\end{proof}


\subsection{Related literature}\label{Section-related-literature}

Symbolic models in dynamics have a longstanding history that can be traced back to the work
of Hadamard on closed geodesics of hyperbolic surfaces, see e.g. \cite{Katok-Ugarcovici-Symbolic}.
The late sixties and early seventies saw a great deal of development
of symbolic dynamics for uniformly hyperbolic diffeomorphisms and flows, through the works
of Adler \& Weiss \cite{Adler-Weiss-PNAS,Adler-Weiss-Similarity-Toral-Automorphisms},
Sina{\u\i} \cite{Sinai-Construction-of-MP,Sinai-MP-U-diffeomorphisms},
Bowen \cite{Bowen-MP-Axiom-A,Bowen-Symbolic-Flows},
Ratner \cite{Ratner-MP-three-dimensions,Ratner-MP-n-dimensions}.
Below we discuss other relevant contexts.

\medskip
\noindent
{\sc Hofbauer towers:} Takahashi developed a combinatorial method
to construct an isomorphism between a large subset $X$ of the natural extension of $\beta$--shifts
and countable topological Markov shifts \cite{Takahashi-isomorphisms}.
Hofbauer proved that $X$ carries all measures of positive entropy and hence
$\beta$--shifts have a unique measure of maximal entropy  \cite{Hofbauer-beta-shifts}.
Hofbauer later extended his construction to piecewise continuous interval maps
\cite{Hofbauer-intrinsic-I,Hofbauer-intrinsic-II}. The symbolic models obtained by his methods
are called {\em Hofbauer towers}, and they have been extensively used to establish ergodic properties
of one dimensional maps.

\medskip
\noindent
{\sc Higher dimensional Hofbauer towers:} Buzzi constructed Hofbauer towers for piecewise
expanding affine maps in any dimension \cite{Buzzi-affine-maps}, for perturbations of fibered
products of one dimensional maps \cite{Buzzi-produits-fibres}, and for arbitrary piecewise invertible
maps whose entropy generated by the boundary of some dynamically relevant partition is {\em less} than
the topological entropy of the map \cite{Buzzi-multidimensional}. 
These Hofbauer towers carry all invariant measures with entropy close enough to the topological
entropy of the system. We remark that, contrary to us, Buzzi's conditions make no reference to the 
nonuniform hyperbolicity of the system.

\medskip
\noindent
{\sc Inducing schemes:} Many systems, although not hyperbolic, do have sets on which
it is possible to define a (not necessarily first) return map on which the map becomes uniformly hyperbolic.
This process is known as {\em inducing}. Indeed, Hofbauer towers can be seen as inducing schemes
for which the map becomes uniformly expanding, see \cite{Bruin-inducing-Hofbauer} for this relation.
It is possible to understand it ergodic theoretical properties of invariant measure that lifts to an inducing scheme,
as done for one-dimensional maps \cite{Pesin-Senti}, higher dimensional ones that do not have
full ``boundary entropy'' \cite{Pesin-Senti-Zhang}, and expanding measures \cite{Pinheiro-expanding}.

\medskip
\noindent
{\sc Yoccoz puzzles:} Yoccoz constructed Markov structures for quadratic maps of the complex
plane, nowadays called {\em Yoccoz puzzles}, and used them to establish the MLC conjecture
for finitely renormalizable parameters and also a proof of Jakobson's theorem, see \cite{Yoccoz}. 

\medskip
\noindent
{\sc Nonuniformly hyperbolic diffeomorphisms:} Katok constructed horseshoes
of large topological entropy for $C^{1+\beta}$ diffeomorphisms \cite{KatokIHES}.
These horseshoes usually have zero measure for measures of maximal entropy.
Sarig constructed a ``horseshoe'' of full entropy for $C^{1+\beta}$ surface diffeomorphisms \cite{Sarig-JAMS}:
for each $\chi>0$ there is a countable topological Markov shift that is an
extension of the diffeomorphism and codes all $\chi$--hyperbolic measures simultaneously.
Ben-Ovadia extended the work of Sarig to higher dimension \cite{Ben-Ovadia-2019}.

\medskip
\noindent
{\sc Nonuniformly hyperbolic three-dimensional flows:} Lima and Sarig
constructed symbolic models for nonuniformly hyperbolic three dimensional flows with positive
speed \cite{Lima-Sarig}. The idea is to build a ``good'' Poincar\'e section and
construct a Markov partition for the Poincar\'e return map $f$.

\medskip
\noindent
{\sc Billiards:} Dynamical billiards are maps with discontinuities.
Katok and Strelcyn constructed invariant manifolds for nonuniformly hyperbolic
billiard maps \cite{Katok-Strelcyn}.
Bunimovich, Chernov, and Sina{\u\i} constructed countable Markov partitions for two dimensional
dispersing billiard maps \cite{Bunimovich-Chernov-Sinai}.
Young construced an inducing scheme for certain two dimensional dispersing billiard maps and
used it to prove exponential decay of correlations \cite{Young-towers}. 
All these results are for Liouville measures. Lima and Matheus constructed countable
Markov partitions for two dimensional billiard maps and nonuniformly hyperbolic (not necessarily Liouville)
measures that are adapted to the billiard map \cite{Lima-Matheus}.


\subsection{Method of proof}

The proof of Theorem \ref{Thm-Main} is based on \cite{Sarig-JAMS}, \cite{Lima-Sarig} and \cite{Lima-Matheus},
and follows the steps below:
\begin{enumerate}[(1)]
\item The derivative cocycle $df$ induces an invertible cocycle $\wh{df}$, defined on a fiber bundle over
the natural extension space $\wh M$, with the same spectrum as $df$.
\item If $\mu$ is $f$--adapted and $\chi$--expanding, then $\wh\mu$--a.e. $\wh x\in\wh M$
has a Pesin chart $\Psi_{\wh x}:[-Q_\ve(\wh x),Q_\ve(\wh x)]\to M$ s.t.
$\lim_{n\to\infty}\tfrac{1}{n}\log Q_\ve(\wh f^n(\wh x))=0$.
In Pesin charts, the inverse branches of $f$ are uniform contractions.
\item Introduce the $\ve$--chart $\Psi_{\wh x}^p$ as the restriction of $\Psi_{\wh x}$
to $[-p,p]$. The parameter $p$ gives a definite size for the unstable manifold at $\wh x$, hence different
$\ve$--charts $\Psi_{\wh x}^p,\Psi_{\wh x}^q$ define unstable manifolds of different sizes.
\item Construct a countable collection of $\ve$--charts that are dense
in the space of all $\ve$--charts, where denseness is defined in terms of
finitely many parameters of $\wh x$.
\item Draw an edge $\Psi_{\wh x}^p\leftarrow\Psi_{\wh y}^q$ between $\ve$--charts 
when an inverse branch of $f$ can be represented in these charts by a uniform contraction
and the parameter $q$ is as large as possible. Each path of $\ve$--charts defines 
an element of $\wh M$, and this coding induces a countable cover on $\wh M$. The requirement 
that $q$ is as large as possible guarantees that this cover is locally finite.
\item Apply a refinement procedure to this cover. The resulting partition defines
a countable topological Markov shift $(\Sigma,\sigma)$ and a coding $\pi:\Sigma\to \wh M$
that satisfy Theorem \ref{Thm-Main}.
\end{enumerate}

\medskip
It is important to stress the importance of having a countable locally finite cover in step (5). 
If not, its refinement could be an uncountable partition (imagine e.g. the cover of $\R$ 
by intervals with rational endpoints). When the countable cover is locally finite, i.e. each element
of the cover intersects at most finitely many others, then its refinement is again countable.
Local finiteness is crucial, and it is the reason to choose $q$ as large as possible.

\medskip
Contrary to \cite{Sarig-JAMS,Lima-Sarig,Lima-Matheus}, we find no difficulty on the control
of the geometry of $M$ (all exponential maps are identities) neither with the geometry of
stable and unstable directions (the stable direction is trivial). Hence the methods
we use in steps (2)--(5) are more clear and more easily implemented than those in
\cite{Sarig-JAMS,Lima-Sarig,Lima-Matheus}. For example, we do not
make use of graph transforms. On the other hand, a difficulty for the implementation of steps (2)--(5) 
is that neither $\wh M$ nor $\wh f$ are smooth objects. This is not a big issue,
since what we want is to control the action of $f$ and its inverse branches,
and this can be made by controlling the action of $\wh f^{\pm 1}$ in the zeroth coordinate.
Working with natural extensions makes step (1) heavier in notation,
and step (6) more complicated to implement. 


\medskip
Natural extensions have been previously used to investigate nonuniformly expanding systems.
Up to the author's knowledge, the first one to use this
approach was Ledrappier, in the context of absolutely
continuous invariant measures of interval maps \cite{Ledrappier-acip}. Other employments
of this approach are \cite{Gelfert-repellers,Dobbs-equilibrium-measures}.

\medskip
The methods employed in this article require some familiarity with
the articles \cite{Sarig-JAMS,Lima-Sarig,Lima-Matheus}, and a first reading might be
difficulty for those not familiar with the referred literature. Unfortunately,
a self-contained exposition would lead to a lengthy manuscript, thus preventing 
to focus on the novelty of the work.

\subsection{Preliminaries}\label{Section-preliminaries}

Let $\mathfs G=(V,E)$ be an oriented graph, where $V=$ vertex set and $E=$ edge set.
We denote edges by $v\to w$, and we assume that $V$ is countable. 

\medskip
\noindent
{\sc Topological Markov shift (TMS):} A {\em topological Markov shift} (TMS) is a pair $(\Sigma,\sigma)$
where
$$
\Sigma:=\{\text{$\Z$--indexed paths on $\mathfs G$}\}=
\left\{\un{v}=\{v_n\}_{n\in\Z}\in V^{\Z}:v_n\to v_{n+1}, \forall n\in\Z\right\}
$$
and $\sigma:\Sigma\to\Sigma$ is the {\em left shift}, $[\sigma(\un v)]_n=v_{n+1}$. 
The {\em recurrent set} of $\Sigma$ is
$$
\Sigma^\#:=\left\{\un v\in\Sigma:\exists v,w\in V\text{ s.t. }\begin{array}{l}v_n=v\text{ for infinitely many }n>0\\
v_n=w\text{ for infinitely many }n<0
\end{array}\right\}.
$$
We endow $\Sigma$ with the distance $d(\un v,\un w):={\rm exp}[-\min\{|n|:n\in\Z\text{ s.t. }v_n\neq w_n\}]$.
This choice is not canonical, and affects the H\"older regularity of
$\pi$ in Theorem \ref{Thm-Main}.

\medskip
Write $a=e^{\pm\ve}b$ when $e^{-\ve}\leq \frac{a}{b}\leq e^\ve$,
and $a=\pm b$ when $-|b|\leq a\leq |b|$. Given an open set $U\subset \R$ and $h:U\to \R$,
let $\|h\|_0:=\sup_{x\in U}\|h(x)\|$ denote the $C^0$ norm of $h$. For $0<\beta<1$,
let $\Hol{\beta}(h):=\sup\frac{\|h(x)-h(y)\|}{\|x-y\|^\beta}$ 
where the supremum ranges over distinct elements $x,y\in U$.
If $h$ is differentiable, let
$\|h\|_1:=\|h\|_0+\|dh\|_0$ denote its $C^1$ norm, and
$\|h\|_{1+\beta}:=\|h\|_1+\Hol{\beta}(dh)$ its $C^{1+\beta}$ norm.
For $r>0$, define $R[r]:=[-r,r]\subset\R$, where $\R$ is endowed with the usual euclidean distance.

\subsection{Natural extensions}\label{Section-natural-extension}

Most of the discussion below is classical, see e.g. \cite{Rohlin-Exactness} or
\cite[\S 3.1]{Aaronson-book}.
Given a (possibly non-invertible) map $f:M\to M$, let
$$
\wh M:=\{\wh x=(x_n)_{n\in\Z}:f(x_{n-1})=x_n, \forall n\in\Z\}.
$$
Although $\wh M$ does depend on $f$, we do not write this dependence.
Endow $\wh M$ with the distance
$\wh d(\wh x,\wh y):=\sup\{2^nd(x_n,y_n):n\leq 0\}$; then $\wh M$ is
a compact metric space. As for TMS, 
the definition of $\widehat d$ is not canonical and reflects the
H\"older regularity of $\pi$ in Theorem \ref{Thm-Main}.
For each $n\in\Z$, let $\vartheta_n:\wh M\to M$ be the projection into the $n$--th
coordinate, $\vartheta_n[\wh x]=x_n$. Let $\wh{\mathfs B}$ be the 
sigma-algebra in $\wh M$ generated by $\{\vartheta_n:n\leq 0\}$, i.e.
$\wh{\mathfs B}$ is the smallest sigma-algebra that makes all $\vartheta_n$, $n\leq 0$, measurable.

\medskip
\noindent
{\sc Natural extension of $f$:} The {\em natural extension} of $f$ is the 
map $\wh f:\wh M\to\wh M$ defined by
$\wh f(\ldots,x_{-1};x_0,\ldots)=(\ldots,x_0;f(x_0),\ldots)$.
It is an invertible map, with inverse $\wh f^{-1}(\ldots,x_{-1};x_0,\ldots)=(\ldots,x_{-2};x_{-1},\ldots)$.

\medskip
Note that $\wh f$ is indeed an extension
of $f$, since $\vt_0\circ \wh f=f\circ\vt_0$. It is the
smallest invertible extension of $f$: any other invertible extension of $f$ is an 
extension of $\wh f$. The benefit of considering the natural extension is that,
in addition to having an invertible map explicitly defined, its complexity is the same as that of $f$:
there is a bijection between $f$--invariant and $\wh f$--invariant probability measures,
as follows.

\medskip
\noindent
{\sc Projection of a measure:} If $\wh\mu$ is an $\wh f$--invariant probability measure, then
$\mu=\wh\mu\circ \vt_0^{-1}$ is an $f$--invariant probability measure.

\medskip
\noindent
{\sc Lift of a measure:} If $\mu$ is an $f$--invariant probability measure,
let $\wh\mu$ be the unique probability measure on $\wh M$ s.t.
$\wh\mu[\{\wh x\in\wh M:x_n\in A\}]=\mu[A]$ for all $A\subset M$ Borel and all $n\leq 0$.

\medskip
It is clear that $\wh\mu$ is $\wh f$--invariant.
What is less clear is that the projection and lift procedures above are inverse
operations, and that they preserve the Kolmogorov-Sina{\u\i} entropy, see \cite{Rohlin-Exactness}.
Here is one consequence of this fact: $\mu$ is an equilibrium measure for a potential $\varphi:M\to\R$ iff
$\wh\mu$ is an equilibrium measure for $\varphi\circ\vt_0:\wh M\to\R$. 
In particular, the topological entropies of $f$ and $\wh f$ coincide, and 
$\mu$ is a measure of maximal entropy for $f$ iff $\wh\mu$ is a measure
of maximal entropy for $\wh f$.


\medskip
Now let $N=\bigsqcup_{x\in M}N_x$ be a vector bundle over $M$, and let $A:N\to N$ measurable
s.t. for every $x\in M$ the restriction $A\restriction_{N_x}$ is a linear
isomorphism $A_x:N_x\to N_{f(x)}$. For example, if $f$ is a differentiable endomorphism on a manifold $M$,
then we can take $N=TM$ and $A=df$. The map $A$ defines a (possibly non-invertible) cocycle $(A^{(n)})_{n\geq 0}$
over $f$ by $A^{(n)}_x=A_{f^{n-1}(x)}\cdots A_{f(x)}A_x$ for $x\in M$, $n\geq 0$.
There is a way of extending $(A^{(n)})_{n\geq 0}$ to an invertible cocycle over $\wh f$.
For $\wh x\in\wh M$, let $N_{\wh x}:=N_{\vt_0[\wh x]}$ and let
$\wh N:=\bigsqcup_{\wh x\in\wh M} N_{\wh x}$, a vector bundle over $\wh M$.
Define the map $\wh A:\wh N\to\wh N$, $\wh A_{\wh x}:=A_{\vt_0[\wh x]}$.
For $\wh x=(x_n)_{n\in\Z}$, define 
$$
\wh A^{(n)}_{\wh x}:=
\left\{
\begin{array}{ll}
A^{(n)}_{x_0}&,\text{ if }n\geq 0\\
A_{x_{-n}}^{-1}\cdots A_{x_{-2}}^{-1}A_{x_{-1}}^{-1}&,\text{ if }n\leq 0.
\end{array}
\right.
$$
By definition,
$\wh A^{(m+n)}_{\wh x}=\wh A^{(m)}_{\wh f^n(\wh x)}\wh A^{(n)}_{\wh x}$
for all $\wh x\in\wh M$ and all $m,n\in\Z$, hence $(\wh A^{(n)})_{n\in\Z}$
is an invertible cocycle over $\wh f$.

\medskip
Whenever it is convenient, we will write $\vt$ to represent $\vartheta_0$.

\section{Pesin theory}

We define changes of coordinates for which the inverse branches of $f$ become uniformly contracting.
Fix $\chi>0$.

\medskip
\noindent
{\sc The set ${\rm NUE}_\chi$:} It is the set of points 
$\wh x\in \wh M\backslash \bigcup_{n\in\Z}\wh f^n(\vartheta^{-1}[\mathfs S])$ s.t.
$$
\lim_{n\to\pm\infty}\tfrac{1}{n}\log\|\wh{df}^{(n)}_{\wh x}\|>\chi.
$$

\medskip
\noindent
{\sc Parameter $u(\wh x)$:} For $\wh{x}\in{\rm NUE}_\chi$, define
$$
u(\wh{x}):=\left(\sum_{n\geq 0}e^{2n\chi}|\wh{df}^{(-n)}_{\wh x}|^2\right)^{1/2}.
$$

\medskip
It is clear that $1<u(\wh{x})<\infty$. The parameter $u(\wh x)$ controls the quality of hyperbolicity (contraction)
of the inverse branches of $f$.

\medskip
\noindent
{\sc Pesin chart $\Psi_{\wh x}$:} For $\wh x\in{\rm NUE}_\chi$, the {\em Pesin chart} at $\wh{x}$
is the map $\Psi_{\wh x}:\R\to\R$,
$\Psi_{\wh x}(t):=\tfrac{1}{u(\wh x)}t+\vartheta[\wh x]$.
It is a diffeomorphism with $\Psi_{\wh x}(0)=\vt[\wh x]$ and $d\Psi_{\wh x}\equiv\tfrac{1}{u(\wh x)}$.

\begin{lemma}\label{Lemma-linear-reduction}
For all $\wh x\in{\rm NUE}_\chi$, the composition
$F_{\wh x}:=\Psi_{\wh f(\wh x)}^{-1}\circ f\circ\Psi_{\wh x}$
is a diffeomorphism from $R[2\mathfrak r(x_0)]$ onto its image,
and it satisfies $F_{\wh x}(0)=0$ and $|(dF_{\wh x})_0|>e^{\chi}$.
\end{lemma}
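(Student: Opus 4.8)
The plan is as follows. Throughout, write $x_0:=\vt[\wh x]$ and $x_1:=f(x_0)=\vt[\wh f(\wh x)]$. Since $\wh x\in{\rm NUE}_\chi$ lies outside the $\wh f$--invariant set $\bigcup_{n\in\Z}\wh f^n(\vt^{-1}[\mathfs S])$, so does $\wh f(\wh x)$; hence $x_0,x_1\notin\mathfs S$, so that $df_{x_0}\neq 0$, the radius $\mathfrak r(x_0)$ is defined, and $\wh f(\wh x)\in{\rm NUE}_\chi$ as well (the set ${\rm NUE}_\chi$ is $\wh f$--invariant, by the cocycle relation), so that $u(\wh f(\wh x))$ and $\Psi_{\wh f(\wh x)}$ make sense. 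I would also note at the outset that, since $\Psi_{\wh x}$ is affine with $d\Psi_{\wh x}\equiv 1/u(\wh x)$ and $u(\wh x)>1$, the image $\Psi_{\wh x}(R[2\mathfrak r(x_0)])$ is the interval centered at $x_0$ of radius $2\mathfrak r(x_0)/u(\wh x)<2\mathfrak r(x_0)$, hence is contained in $D_{x_0}=B(x_0,2\mathfrak r(x_0))$.

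Granting this, the first two assertions are immediate. By (A1) the restriction $f|_{D_{x_0}}$ is a diffeomorphism onto its image; composing it with the affine diffeomorphisms $\Psi_{\wh x}|_{R[2\mathfrak r(x_0)]}$ and $\Psi_{\wh f(\wh x)}^{-1}$ shows that $F_{\wh x}=\Psi_{\wh f(\wh x)}^{-1}\circ f\circ\Psi_{\wh x}$ is a diffeomorphism from $R[2\mathfrak r(x_0)]$ onto its image. Evaluating at $t=0$: $\Psi_{\wh x}(0)=x_0$, $f(x_0)=x_1=\vt[\wh f(\wh x)]$, and $\Psi_{\wh f(\wh x)}^{-1}(x_1)=0$; hence $F_{\wh x}(0)=0$.

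The main point is the derivative bound. From $F_{\wh x}(t)=u(\wh f(\wh x))\bigl(f(t/u(\wh x)+x_0)-x_1\bigr)$ and the chain rule, $(dF_{\wh x})_0=\dfrac{u(\wh f(\wh x))}{u(\wh x)}\,df_{x_0}$, so it suffices to show $u(\wh f(\wh x))\,|df_{x_0}|>e^{\chi}\,u(\wh x)$. For this I would establish the recursion for $u$ along the orbit. The cocycle relation together with $\wh{df}^{(1)}_{\wh x}=df_{x_0}$ gives, for every $n\geq 1$,
\[
\wh{df}^{(-n)}_{\wh f(\wh x)}=\wh{df}^{(1-n)}_{\wh x}\bigl(\wh{df}^{(1)}_{\wh x}\bigr)^{-1}=(df_{x_0})^{-1}\,\wh{df}^{(1-n)}_{\wh x}
\]
(scalars, so the order is irrelevant). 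Plugging this into the definition of $u$, splitting off the $n=0$ term and reindexing $l=n-1$,
\[
u(\wh f(\wh x))^2=1+\frac{1}{|df_{x_0}|^2}\sum_{n\geq 1}e^{2n\chi}\bigl|\wh{df}^{(1-n)}_{\wh x}\bigr|^2=1+\frac{e^{2\chi}}{|df_{x_0}|^2}\sum_{l\geq 0}e^{2l\chi}\bigl|\wh{df}^{(-l)}_{\wh x}\bigr|^2=1+\frac{e^{2\chi}u(\wh x)^2}{|df_{x_0}|^2}.
\]
Discarding the (harmless) $+1$ yields $u(\wh f(\wh x))>e^{\chi}u(\wh x)/|df_{x_0}|$, i.e. $u(\wh f(\wh x))|df_{x_0}|>e^{\chi}u(\wh x)$, and therefore $|(dF_{\wh x})_0|>e^{\chi}$.

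The only genuine difficulty is bookkeeping: getting the indices and the direction right in the identity for $\wh{df}^{(-n)}_{\wh f(\wh x)}$, which hinges on the precise definition of the invertible extension $\wh{df}$ of the cocycle along the backward orbit. A second, purely routine point is the inclusion $\Psi_{\wh x}(R[2\mathfrak r(x_0)])\subset D_{x_0}$, where the strict inequality $u(\wh x)>1$ is exactly what is used. All the rest --- invertibility via (A1), the affine computations, and discarding the $+1$ --- is immediate.
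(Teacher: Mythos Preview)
Your proof is correct and follows essentially the same route as the paper's: the same inclusion $\Psi_{\wh x}(R[2\mathfrak r(x_0)])\subset D_{x_0}$ via $u(\wh x)>1$, the same chain-rule formula $(dF_{\wh x})_0=\tfrac{u(\wh f(\wh x))}{u(\wh x)}df_{x_0}$, and the same recursion $u(\wh f(\wh x))^2=1+e^{2\chi}|df_{x_0}|^{-2}u(\wh x)^2$ from the cocycle identity. The paper concludes by rewriting $|(dF_{\wh x})_0|^2=e^{2\chi}\tfrac{u(\wh f(\wh x))^2}{u(\wh f(\wh x))^2-1}>e^{2\chi}$, while you discard the $+1$ directly; these are equivalent.
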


\begin{proof}
Let $\wh x=(x_n)_{n\in\Z}$. The maps $\Psi_{\wh f(\wh x)}^{-1},\Psi_{\wh x}$ are globally defined
diffeomorphisms. Observing that $\Psi_{\wh x}(R[2\mathfrak r(x_0)])\subset B(x_0,2\mathfrak r(x_0))$ and that, 
by (A1), the restriction of $f$ to $B(x_0,2\mathfrak r(x_0))$ is a diffeomorphism onto its image,
it follows that $F_{\wh x}$ is a diffeomorphism from $R[2\mathfrak r(x_0)]$ onto its image.
Also, $F_{\wh x}(0)=(\Psi_{\wh f(\wh x)}^{-1}\circ f)(\vt[\wh x])=\Psi_{\wh f(\wh x)}^{-1}(\vt[\wh f(\wh x)])=0$.
It remains to estimate $|(dF_{\wh x})_0|$. Write $x=\vartheta[\wh x]$ and note that
$(dF_{\wh x})_0=df_x\tfrac{u(\wh f(\wh x))}{u(\wh x)}$.
Since $\wh{df}_{\wh x}^{(-n)}=\wh{df}^{(-n-1)}_{\wh f(\wh x)}\circ \wh{df}_{\wh x}=
\wh{df}^{(-n-1)}_{\wh f(\wh x)}\circ df_{x}$
for $n\geq 0$, we have
\begin{align*}
&\, u(\wh f(\wh x))^2=1+\sum_{n\geq 0}e^{2(n+1)\chi}|\wh{df}^{(-n-1)}_{\wh f(\wh x)}|^2=
1+e^{2\chi}|df_x|^{-2}\sum_{n\geq 0}e^{2n\chi}|\wh{df}^{(-n)}_{\wh x}|^2\\
&=1+e^{2\chi}|df_x|^{-2}u(\wh x)^2,
\end{align*}
therefore
$|(dF_{\wh x})_0|^2=e^{2\chi}\tfrac{u(\wh f(\wh x))^2}{u(\wh f(\wh x))^2-1}>e^{2\chi}$.
\end{proof}

Now we control the distance of trajectories of $\wh f$ to the singular set $\mathfs S$.
For $\wh x\in\wh M$, define $\rho(\wh x):=d\left(\{\vt_{-1}[\wh x],\vt_{0}[\wh x],\vt_{1}[\wh x]\},\mathfs S\right)$.

\medskip
\noindent
{\sc Regular set:} The {\em regular set of $f$} is 
\begin{align*}
{\rm Reg}:=&\left\{\wh x\in  \wh M\backslash \bigcup_{n\in\Z}\wh f^n(\vartheta^{-1}[\mathfs S]):
\lim_{n\to\pm\infty}\tfrac{1}{|n|}\log\rho(\wh f^n(\wh x))=0\right\}\\
=&\left\{\wh x\in  \wh M\backslash \bigcup_{n\in\Z}\wh f^n(\vartheta^{-1}[\mathfs S]):
\lim_{n\to\pm\infty}\tfrac{1}{|n|}\log d(\vt_n[\wh x],\mathfs S)=0\right\}.
\end{align*}

\medskip
\noindent
{\sc The set ${\rm NUE}_\chi^*$:} It is the set of $\wh x\in{\rm NUE}_\chi$ with the following properties:
\begin{enumerate}[(1)]
\item $\wh x\in{\rm Reg}$.
\item There exist a sequence $n_k\to-\infty$ s.t.
$u(\wh f^{n_k}(\wh x))\to u(\wh x)$. 
\item $\lim_{n\to-\infty}\tfrac{1}{n}\log u(\wh f^n(\wh x))=0$.
\end{enumerate}

\medskip
The next lemma shows that ${\rm NUE}_\chi^*$ carries the measures that are relevant to us.


\begin{lemma}\label{Lemma-adaptedness}
If $\wh\mu=$ lift of $f$--adapted $\chi$--expanding measure,
then $\wh\mu[{\rm NUE}_\chi^*]=1$.
\end{lemma}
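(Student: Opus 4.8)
The plan is to verify the three defining conditions of ${\rm NUE}_\chi^*$ separately, using in each case the Birkhoff ergodic theorem applied to $\wh f$ together with the hypotheses on $\mu$.

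First I would reduce to showing $\wh\mu[{\rm NUE}_\chi]=1$. Since $\mu$ is $\chi$--expanding, $\lim_n\tfrac1n\log|df^n_x|>\chi$ for $\mu$--a.e.\ $x$; by the Oseledets/Furstenberg--Kesten theorem (or directly Kingman's subadditive ergodic theorem applied to the cocycle $\log\|df^{(n)}\|$ under $f$) this limit exists and equals $\int\log|df|\,d\mu$ on an ergodic component, so $\int\log|df|\,d\mu\geq\chi$ after decomposing into ergodic components. Passing to the natural extension, $\wh\mu$ is $\wh f$--invariant and $\int\log|\wh{df}|\,d\wh\mu=\int\log|df|\,d\mu$, and the invertible cocycle $\wh{df}^{(n)}$ over $\wh f$ satisfies $\lim_{n\to\pm\infty}\tfrac1n\log\|\wh{df}^{(n)}_{\wh x}\|=\int\log|\wh{df}|\,d\wh\mu>\chi$ for $\wh\mu$--a.e.\ $\wh x$ (using that forward and backward Lyapunov exponents of an invertible cocycle agree $\wh\mu$--a.e.\ on each ergodic component). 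One must also check $\wh\mu\big[\bigcup_{n\in\Z}\wh f^n(\vt^{-1}[\mathfs S])\big]=0$; this follows because $\mu(\mathfs S)=0$ (a fortiori consequence of $f$--adaptedness), hence $\wh\mu[\vt^{-1}[\mathfs S]]=\mu[\mathfs S]=0$, and a countable union of $\wh f$--images of a null set is null by invariance. Thus $\wh\mu[{\rm NUE}_\chi]=1$.

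Next, condition (1) (membership in ${\rm Reg}$) is where $f$--adaptedness enters. Since $\log d(x,\mathfs S)\in L^1(\mu)$ and $\rho(\wh x)=d(\{x_{-1},x_0,x_1\},\mathfs S)$ is comparable (up to a bounded multiplicative error coming from how $f$ moves points relative to $\mathfs S$) to $d(x_0,\mathfs S)$ composed along at most three iterates, the function $\log\rho$ is in $L^1(\wh\mu)$; more simply, it suffices that $\log d(\vt_0[\wh x],\mathfs S)=\log d(x_0,\mathfs S)\in L^1(\wh\mu)$, which is exactly the lifted adaptedness. Birkhoff's theorem applied to $g(\wh x):=\log d(\vt_0[\wh x],\mathfs S)$ under the invertible map $\wh f$ gives $\tfrac1n\sum_{k=0}^{n-1}g(\wh f^k(\wh x))\to\bar g(\wh x)$ in $L^1$ and a.e.\ as $n\to+\infty$, and the analogous statement as $n\to-\infty$; subtracting consecutive averages yields $\tfrac1n g(\wh f^n(\wh x))\to 0$, i.e.\ $\tfrac1n\log d(\vt_n[\wh x],\mathfs S)\to 0$ as $n\to\pm\infty$, which is the second (equivalent) description of ${\rm Reg}$.

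Finally, conditions (2) and (3) concern the growth of $u(\wh f^n(\wh x))$ as $n\to-\infty$. For (3), I would show $\tfrac1n\log u(\wh f^n(\wh x))\to 0$ as $n\to-\infty$: from the definition $u(\wh x)^2=\sum_{m\geq0}e^{2m\chi}|\wh{df}^{(-m)}_{\wh x}|^2$, and since $\lim_{m\to+\infty}\tfrac1m\log|\wh{df}^{(-m)}_{\wh x}|=-\lim_{m\to+\infty}\tfrac1m\log\|\wh{df}^{(m)}_{\wh f^{-m}(\wh x)}\|$, which by Lyapunov regularity is $<-\chi$, the series converges and $\log u(\wh x)$ is dominated by a tempered (subexponential) quantity; one then invokes the standard fact (tempering / the Pesin "$\ve$--slowly varying" lemma, cf.\ the analogous estimates in \cite{Sarig-JAMS} or \cite{Lima-Sarig}) that $n\mapsto\log u(\wh f^n(\wh x))$ varies subexponentially along the orbit, giving (3). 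Condition (2) is the part I expect to be the main obstacle: it asserts the existence of a sequence $n_k\to-\infty$ with $u(\wh f^{n_k}(\wh x))\to u(\wh x)$. The natural argument is a Poincar\'e recurrence argument: by (3) the functions $\wh x\mapsto u(\wh f^n(\wh x))$ do not blow up, so one can work inside a set of large $\wh\mu$--measure on which $u$ is bounded and continuous on a compact piece; applying Poincar\'e recurrence to $\wh f^{-1}$ on this set produces times $n_k\to-\infty$ with $\wh f^{n_k}(\wh x)$ returning close to $\wh x$, and then continuity of $u$ on the good set (or, more robustly, lower/upper semicontinuity estimates for $u$ obtained from its series definition and the continuity of the cocycle $\wh{df}$ restricted to a Lusin-type set) forces $u(\wh f^{n_k}(\wh x))\to u(\wh x)$. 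Making the continuity/semicontinuity of $u$ precise on the relevant full-measure set — since $u$ is built from an infinite series of cocycle norms along backward orbits — is the delicate point, and I would handle it exactly as in the corresponding step of \cite{Sarig-JAMS}.
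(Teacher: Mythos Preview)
Your outline for $\wh\mu[{\rm NUE}_\chi]=1$ and for condition~(1) matches the paper. The issues are with (2) and (3), where you have the difficulty inverted.

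Condition~(2) needs no Lusin set and no continuity of $u$. Since $u$ is measurable and finite $\wh\mu$--a.e., apply Poincar\'e recurrence under $\wh f^{-1}$ to the level sets $\{\wh y:u(\wh y)\in[k/m,(k+1)/m)\}$; for every $m$ this produces $n_k\to-\infty$ with $|u(\wh f^{n_k}(\wh x))-u(\wh x)|\leq 1/m$, and intersecting over $m$ gives (2). The paper does exactly this, in one line.

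Condition~(3) is where the work lies, and your sketch has a gap: the Pesin tempering lemma \emph{assumes} subexponential variation along orbits and outputs an $\ve$--slowly varying majorant, so invoking it to \emph{prove} $\tfrac{1}{n}\log u(\wh f^n(\wh x))\to 0$ is circular. There are two honest routes. One, closer to your idea, is to use Lyapunov regularity of the scalar cocycle directly: writing $\log|\wh{df}^{(k)}_{\wh x}|=k\lambda+r(k)$ with $r(k)=o(|k|)$ and using $\wh{df}^{(-m)}_{\wh f^n(\wh x)}=\wh{df}^{(n-m)}_{\wh x}(\wh{df}^{(n)}_{\wh x})^{-1}$ gives $u(\wh f^n(\wh x))^2=e^{-2r(n)}\sum_{m\geq 0}e^{2m(\chi-\lambda)+2r(n-m)}\leq C_\ve e^{4\ve|n|}$ for every $\ve\in(0,\lambda-\chi)$, hence the limit is $0$. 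The paper takes a different route: from Lemma~\ref{Lemma-linear-reduction} one has $\varphi:=\log|(dF_{\wh x})_0|=\psi+(\log u\circ\wh f-\log u)$ with $\psi=\log|\wh{df}|\in L^1(\wh\mu)$; the key estimate is $\varphi\in L^1(\wh\mu)$, obtained via $u^2\geq 1+e^{2\chi}\rho^{2a}$ (from (A2)) and $\log\rho\in L^1$, so adaptedness enters again here. Then Birkhoff for $\varphi$ and $\psi$, combined with the recurrence subsequence from (2) along which the coboundary term vanishes, forces the two Birkhoff limits to agree and hence $\tfrac{1}{n}\log u(\wh f^n(\wh x))\to 0$. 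Your direct route is arguably cleaner once the regularity estimate is written out; the paper's route isolates precisely where adaptedness is used a second time.
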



\begin{proof}
Let $\mu$ be an $f$--adapted $\chi$--expanding measure, and let $\wh\mu$ be its lift.
Let $\wh X_0:=\wh M\backslash\bigcup_{n\in\Z}\wh f^n(\vt^{-1}(\mathfs S))$.
By $f$--adaptedness and the definition of $\wh\mu$ we have
$\wh\mu(\vt^{-1}[\mathfs S])=\mu(\mathfs S)=0$, hence $\wh\mu(\wh X_0)=1$.
By (A2), $|\log |df_x||\leq a|\log d(x,\mathfs S)|$ and so $f$--adaptedness implies that
$\log |df_x|\in L^1(\mu)$. Hence 
$\int |\log|\wh{df}_{\wh x}||d\wh\mu(\wh x)=\int |\log|df_x||d\mu(x)<\infty$, thus proving that
$\log |\wh{df}|\in L^1(\wh\mu)$.
By the Birkhoff ergodic theorem,
$\exists \wh X_1\subset\wh M$ with
$\wh\mu(\wh X_1)=1$ s.t. $\lim_{n\to\pm\infty}\tfrac{1}{n}\log|\wh{df}^{(n)}_{\wh x}|$
exists for all $\wh x\in\wh X_1$. Now let $Y$ be the set of $x\in M$
s.t. $\lim_{n\to+\infty}\tfrac{1}{n}\log|df^n_x|>\chi$, and let $\wh X_2=\vartheta^{-1}(Y)$.
We also have $\wh\mu(\wh X_2)=1$, therefore $\mu(\wh X_0\cap\wh X_1\cap\wh X_2)=1$.
Since $\wh{df}^{(n)}_{\wh x}=df^n_{\vartheta[\wh x]}$ for $n\geq 0$, we have
that $\lim_{n\to\pm\infty}\tfrac{1}{n}\log|\wh{df}^{(n)}_{\wh x}|>\chi$
for all $\wh x\in\wh X_1\cap\wh X_2$, therefore $\wh X_0\cap \wh X_1\cap\wh X_2\subset{\rm NUE}_\chi$.
This implies that $\wh\mu[{\rm NUE}_\chi]=1$.

\medskip
To prove that $\wh\mu[{\rm NUE}_\chi^*]=1$, it remains to show that
conditions (1)--(3) hold $\wh\mu$--a.e.
We have $\int |\log d(\vt[\wh x],\mathfs S)|d\wh\mu(\wh x)=\int |\log d(x,\mathfs S)|d\mu(x)<\infty$.
By $\wh f$--invariance,
$\log d(\vt_{-1}[\wh x],\mathfs S),\log d(\vt_0[\wh x],\mathfs S),
\log d(\vt_1[\wh x],\mathfs S)$ are in $L^1(\mu)$,
hence is also their minimum $\log\rho(\wh x)$. By
the Birkhoff ergodic theorem\footnote{Here we are using that if $\varphi:M\to\R$ satisfies
$\int |\varphi|d\mu<\infty$ then $\lim_{n\to\pm\infty}\tfrac{1}{n}\varphi(f^n(x))=0$ $\mu$--a.e.
Indeed, by the Birkhoff ergodic theorem
$\widetilde\varphi(x)=\lim_{n\to\infty}\tfrac{1}{n}\sum_{i=0}^{n-1}\varphi(f^i(x))$ exists $\mu$--a.e., hence
$\lim_{n\to\infty}\tfrac{1}{n}\varphi(f^n(x))=\lim_{n\to\infty}\left[\tfrac{1}{n}\sum_{i=0}^{n}\varphi(f^i(x))
-\tfrac{1}{n}\sum_{i=0}^{n-1}\varphi(f^i(x))\right]=0$ $\mu$--a.e. The same argument works
for $n\to-\infty$.} we get $\wh\mu({\rm Reg})=1$. By the Poincar\'e recurrence theorem,
condition (2) holds $\wh\mu$--a.e.

\medskip
Finally, we check that condition (3) holds $\wh\mu$--a.e. For $\wh x\in{\rm NUE}_\chi$,
let $\varphi(\wh x):=\log|(dF_{\wh x})_0|$.
It is enough to show that $\varphi\in L^1(\wh\mu)$, because of the following:
\begin{enumerate}[$\circ$]
\item By the proof of Lemma \ref{Lemma-linear-reduction},
$\varphi=\psi+[\log u\circ \wh f-\log u]$,
where $\psi:=\log|\wh{df}|\in L^1(\wh\mu)$.
\item By the Poincar\'e recurrence theorem, $\liminf_{n\to-\infty}\tfrac{1}{n}\log u(\wh f^n(\wh x))=0$
$\wh\mu$--a.e., hence
$\liminf_{n\to-\infty}\tfrac{\varphi_n(\wh x)}{n}=\liminf_{n\to-\infty}\tfrac{\psi_n(\wh x)}{n}$
$\wh\mu$--a.e., where $\varphi_n,\psi_n$ denote the Birkhoff sums of $\varphi,\psi$ with respect to $\wh f$.
\item By the Birkhoff ergodic theorem,
$\lim_{n\to-\infty}\tfrac{\varphi_n(\wh x)}{n}=\lim_{n\to-\infty}\tfrac{\psi_n(\wh x)}{n}$
$\wh\mu$--a.e., hence $\lim_{n\to-\infty}\tfrac{1}{n}\log u(\wh f^n(\wh x))=0$
$\wh\mu$--a.e.
\end{enumerate}
We show that $\varphi\in L^1(\wh\mu)$. By Lemma \ref{Lemma-linear-reduction}
we have $\varphi>\chi$, hence it is enough to prove that $\int\varphi d\wh\mu<\infty$.
Since $2\varphi(\wh x)=\log\left(e^{2\chi}\tfrac{u(\wh f(\wh x))^2}{u(\wh f(\wh x))^2-1}\right)$
and $\wh\mu$ is $\wh f$--invariant, this former claim is equivalent to showing that
$\int\log\left(e^{-2\chi}\tfrac{u(\wh x)^2-1}{u(\wh x)^2}\right)>-\infty$.
By (A2),
$u(\wh x)^2\geq 1+e^{2\chi}\|\wh{df}^{(-1)}_{\wh x}\|^2\geq 
1+e^{2\chi}d(\vartheta_{-1}[\wh x],\mathfs S)^{2a}\geq 1+e^{2\chi}\rho(\wh x)^{2a}$
thus
$$
e^{-2\chi}\frac{u(\wh x)^2-1}{u(\wh x)^2}
=e^{-2\chi}\left(1-\frac{1}{u(\wh x)^2}\right)
\geq\frac{\rho(\wh x)^{2a}}{1+e^{2\chi}\rho(\wh x)^{2a}}\geq\frac{\rho(\wh x)^{2a}}{1+e^{2\chi}}\,,
$$
hence
$
\int \log\left(e^{-2\chi}\tfrac{u(\wh x)^2-1}{u(\wh x)^2}\right)d\wh\mu(\wh x)
\geq 2a\int\log \rho(\wh x)d\wh\mu(\wh x)-\log(1+e^{2\chi})>-\infty.
$
This completes the proof of the lemma.
\end{proof}

\subsection{Inverse branches}

By (A1), the inverse branch of $f$ that sends $f(x)$ to $x$ is a well-defined
diffeomorphism from $E_x=B(f(x),2\mathfrak r(x))$
onto its image.

\medskip
\noindent
{\sc Inverse branch of $f$:} Let $g_x:E_x\to g_x(E_x)$ be the inverse
branch of $f$ that sends $f(x)$ to $x$. For $\wh x=(x_n)_{n\in\Z}\in\wh M$,
define $g_{\wh x}:=g_{x_{-1}}$.

\medskip
Note that $g_{\wh x}(x_0)=x_{-1}$.
We want to mimic the behavior of $(dF_{\wh x})_0$ to the inverse branches $g_{\wh x}$.
For that, we need to reduce the domains of Pesin charts to intervals so that their images
do not intersect the singular set $\mathfs S$ and at the same time we can control the variation of $df$.
Given $\ve>0$, let $I_\ve:=\{e^{-\frac{1}{3}\ve n}:n\geq 0\}$.

\medskip
\noindent
{\sc Parameter $Q_\ve(\wh x)$:} For $\wh x\in{\rm NUE}_\chi$, let
$Q_\ve(\wh x):=\max\{q\in I_\ve:q\leq \widetilde Q_\ve(\wh x)\}$, where
$$
\widetilde Q_\ve(\wh x)=\ve^{3/\beta}
\min\left\{u(\wh x)^{-24/\beta},u(\wh f^{-1}(\wh x))^{-12/\beta}\rho(\wh x)^{72a/\beta}\right\}.
$$

\medskip
The term $\ve^{3/\beta}$ will allow to absorb multiplicative constants.
The choice of $Q_\ve(\wh x)$ guarantees that the inverse of $F_{\wh f^{-1}(\wh x)}$
is well-defined in $R[10Q_\ve(\wh x)]$
and that it is close to a linear contraction (Theorem \ref{Thm-non-linear-Pesin}),
and it also allows to compare nearby Pesin charts (Proposition \ref{Lemma-overlap}).
We have the following trivial bounds:
\begin{align*}
&Q_\ve(\wh x)\leq \ve^{3/\beta}, u(\wh x)Q_\ve(\wh x)^{\beta/24}\leq \ve^{1/8},
u(\wh f^{-1}(\wh x))Q_\ve(\wh x)^{\beta/12}\leq \ve^{1/4},\\
&\rho(\wh x)^{-a}Q_\ve(\wh x)^{\beta/72}\leq \ve^{1/24}.
\end{align*}

Let $\wh x=(x_n)_{n\in\Z}\in{\rm NUE}_\chi$. The definition of $Q_\ve(\wh x)$
is strong enough to guarantee that $\Psi_{\wh x}(R[10Q_\ve(\wh x)])$ is contained 
in neighborhoods where (A1)--(A3) hold:
\begin{enumerate}[$\circ$]
\item $\Psi_{\wh x}(R[10Q_\ve(\wh x)])\subset D_{x_0}\cap E_{x_{-1}}$: we have
$\Psi_{\wh x}(R[10Q_\ve(\wh x)])\subset B(x_0,10Q_\ve(\wh x))\subset D_{x_0}\cap E_{x_{-1}}$,
since $10Q_\ve(\wh x)<10\ve^{3/\beta}\rho(\wh x)^a<\mathfrak r(x_{-1}),\mathfrak r(x_0)$.
\item $g_{\wh x}(\Psi_{\wh x}(R[10Q_\ve(\wh x)]))\subset D_{x_{-1}}$: by the previous item,
$g_{\wh x}(\Psi_{\wh x}(R[10Q_\ve(\wh x)]))$ is well-defined. By (A2),
\begin{align*}
&\ g_{\wh x}(\Psi_{\wh x}(R[10Q_\ve(\wh x)]))\subset g_{\wh x}[B(x_0,10Q_\ve(\wh x))]\\
&\subset B(x_{-1},10Q_\ve(\wh x)d(x_{-1},\mathfs S)^{-a})\subset D_{x_{-1}},
\end{align*}
since
$10Q_\ve(\wh x)d(x_{-1},\mathfs S)^{-a}<10\ve^{3/\beta}\rho(\wh x)^{2a}d(x_{-1},\mathfs S)^{-a}
<\rho(\wh x)^a<\mathfrak r(x_{-1})$.
\item $\Psi_{\wh x}(R[10Q_\ve(\wh x)])\subset g_{\wh f(\wh x)}(E_{x_0})$: noting that
$g_{\wh f(\wh x)}=g_{x_0}$, assumption (A2) implies 
$g_{x_0}(E_{x_0})\supset B(x_0,2\mathfrak r(x_0)d(x_0,\mathfs S)^a)\supset B(x_0,10Q_\ve(\wh x))$,
since $10Q_\ve(\wh x)<10\ve^{3/\beta}\rho(\wh x)^{2a}<2\mathfrak r(x_0)d(x_0,\mathfs S)^a$.
Thus $g_{x_0}(E_{x_0})\supset \Psi_{\wh x}(R[10Q_\ve(\wh x)])$.
\end{enumerate}
The third item implies the following: if $y\in \Psi_{\wh x}(R[10Q_\ve(\wh x)])$,
then $y$ is the unique pre-image of $f(y)$ in $g_{\wh f(\wh x)}(E_{x_0})$, and $y=g_{\wh f(\wh x)}(f(y))$.

\begin{lemma}[Tempering Kernel]\label{Lemma-temperedness}
If $\wh x\in{\rm NUE}_\chi^*$, then
$$
\lim_{n\to-\infty}\tfrac{1}{|n|}\log Q_\ve(\wh f^n(\wh x))=0.
$$
\end{lemma}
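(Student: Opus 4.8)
The plan is to reduce the statement to the three defining conditions of ${\rm NUE}_\chi^*$. The first step is to discard the discretization by $I_\ve$: since $0<\widetilde Q_\ve(\wh x)\le\ve^{3/\beta}<1$ and consecutive elements of $I_\ve$ differ by the factor $e^{\ve/3}$, maximality in the definition of $Q_\ve$ forces $e^{-\ve/3}\widetilde Q_\ve(\wh x)<Q_\ve(\wh x)\le\widetilde Q_\ve(\wh x)$, i.e. $Q_\ve(\wh x)=e^{\pm\ve/3}\widetilde Q_\ve(\wh x)$. Hence $\tfrac{1}{|n|}\log Q_\ve(\wh f^n(\wh x))=\tfrac{1}{|n|}\log\widetilde Q_\ve(\wh f^n(\wh x))+\tfrac{1}{|n|}O(\ve)$, and since the last term tends to $0$, it suffices to prove $\lim_{n\to-\infty}\tfrac{1}{|n|}\log\widetilde Q_\ve(\wh f^n(\wh x))=0$.

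Next I would expand the definition: $\log\widetilde Q_\ve(\wh x)=\tfrac{3}{\beta}\log\ve+\min\{A(\wh x),B(\wh x)\}$, where $A(\wh x):=-\tfrac{24}{\beta}\log u(\wh x)$ and $B(\wh x):=-\tfrac{12}{\beta}\log u(\wh f^{-1}(\wh x))+\tfrac{72a}{\beta}\log\rho(\wh x)$. The constant term contributes $\tfrac{3}{\beta|n|}\log\ve\to0$, so the problem is to show that $\tfrac{1}{|n|}\min\{A(\wh f^n\wh x),B(\wh f^n\wh x)\}=\min\{\tfrac{1}{|n|}A(\wh f^n\wh x),\tfrac{1}{|n|}B(\wh f^n\wh x)\}$ tends to $0$; since a minimum of finitely many sequences converging to $0$ converges to $0$, it is enough to treat $\tfrac{1}{|n|}A(\wh f^n\wh x)$ and $\tfrac{1}{|n|}B(\wh f^n\wh x)$ separately.

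The term $\tfrac{1}{|n|}A(\wh f^n\wh x)$ is a fixed multiple of $\tfrac{1}{|n|}\log u(\wh f^n(\wh x))$, which vanishes by condition (3) of ${\rm NUE}_\chi^*$. For $\tfrac{1}{|n|}B(\wh f^n\wh x)$, the first summand is a fixed multiple of $\tfrac{1}{|n|}\log u(\wh f^{n-1}(\wh x))=\tfrac{|n-1|}{|n|}\cdot\tfrac{1}{|n-1|}\log u(\wh f^{n-1}(\wh x))$, which again vanishes by condition (3) together with $\tfrac{|n-1|}{|n|}\to1$; the second summand is a fixed multiple of $\tfrac{1}{|n|}\log\rho(\wh f^n(\wh x))$, and since $\rho(\wh f^n(\wh x))=\min\{d(\vt_{n-1}[\wh x],\mathfs S),d(\vt_{n}[\wh x],\mathfs S),d(\vt_{n+1}[\wh x],\mathfs S)\}$ one has $\tfrac{1}{|n|}\log\rho(\wh f^n(\wh x))=\min_{j\in\{-1,0,1\}}\tfrac{|n+j|}{|n|}\cdot\tfrac{1}{|n+j|}\log d(\vt_{n+j}[\wh x],\mathfs S)$, each factor of which tends to $0$ because $\wh x\in{\rm Reg}$ (condition (1)) and $\tfrac{|n+j|}{|n|}\to1$. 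Assembling these limits gives $\tfrac{1}{|n|}\log\widetilde Q_\ve(\wh f^n(\wh x))\to0$, as required.

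I do not expect a genuine obstacle here: the lemma is essentially bookkeeping, and the content is simply that the defining conditions of ${\rm NUE}_\chi^*$ were set up precisely so that the two $u$-factors and the $\rho$-factor appearing in $\widetilde Q_\ve$ are all subexponential along the backward orbit. The only mild care needed is the index shift $n\mapsto n-1$ in the $u(\wh f^{-1}(\cdot))$-term and the three-point nature of $\rho$, both of which are harmless since $\tfrac{|n+j|}{|n|}\to1$ for fixed $j$. (Condition (2) of ${\rm NUE}_\chi^*$ plays no role in this particular lemma.)
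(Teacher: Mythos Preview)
Your proposal is correct and follows essentially the same approach as the paper's proof: both reduce the claim to the subexponentiality of $u$ along the backward orbit (condition~(3)) and of $\rho$ (condition~(1), i.e.\ $\wh x\in{\rm Reg}$). The paper's argument is simply a terser version of yours---it notes $\limsup\le 0$ trivially and $\liminf\ge 0$ from conditions (1) and (3)---whereas you spell out the discretization step, the min structure, and the index shifts explicitly; your remark that condition~(2) is unused is also accurate.
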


\begin{proof}
Clearly $\limsup_{n\to-\infty}\tfrac{1}{|n|}\log Q_\ve(\wh f^n(\wh x))\leq 0$.
Conversely, $\wh x\in{\rm Reg}$ implies that $\lim_{n\to-\infty}\tfrac{1}{|n|}\log\rho(\wh f^n(\wh x))=0$.
By condition (3) in the definition of ${\rm NUE}_\chi^*$,
$\lim_{n\to-\infty}\tfrac{1}{|n|}\log u(\wh f^n(\wh x))=0$,
therefore $\liminf_{n\to-\infty}\tfrac{1}{|n|}\log Q_\ve(\wh f^n(\wh x))\geq 0$.
\end{proof}

\medskip
\noindent
{\sc Inverse branches of $f$ in Pesin charts:} For $\wh x\in{\rm NUE}_\chi$, let
$G_{\wh x}:R[2\mathfrak r(x_{-1})]\to\R$ be the composition defined by
$G_{\wh x}:= \Psi_{\wh f^{-1}(\wh x)}^{-1}\circ g_{\wh x}\circ\Psi_{\wh x}$.

\medskip
If $\wh x=(x_n)_{n\in\Z}$ then $G_{\wh x}$ is the representation of $g_{\wh x}$ in Pesin charts.
Note that $G_{\wh x}$ is a diffeomorphism from $R[2\mathfrak r(x_{-1})]$ onto its image, since 
$\Psi_{\wh f^{-1}(\wh x)}^{-1},\Psi_{\wh x}$ are globally defined diffeomorphisms and $g_{\wh x}$
is a diffeomorphism from $E_{x_{-1}}\supset \Psi_{\wh x}(R[2\mathfrak r(x_{-1})])$ onto its image, 
by (A1). Also, $G_{\wh x}$ is the inverse of $F_{\wh f^{-1}(\wh x)}$ where the compositions
are well-defined.
The next theorem gives a better understanding of $G_{\wh x}$.

\begin{theorem}\label{Thm-non-linear-Pesin} 
The following holds for all $\ve>0$ small enough: If $\wh x\in{\rm NUE}_\chi$
then $G_{\wh x}$ is a diffeomorphism from $R[10Q_\ve(\wh x)]$
onto its image, and it can be written as $G_{\wh x}(t)=At+h(t)$ where:
\begin{enumerate}[{\rm (1)}]
\item $|A|<e^{-\chi}$.
\item $h(0)=dh_0=0$ and $\|h\|_{1+\beta/2}<\ve$, where the norm is taken in $R[10Q_\ve(\wh x)]$.
\end{enumerate}
In particular, $\|dG_{\wh x}\|_0< e^{-\chi/2}$.
\end{theorem}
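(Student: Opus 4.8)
The plan is to unwind the definition $G_{\wh x} = \Psi_{\wh f^{-1}(\wh x)}^{-1} \circ g_{\wh x} \circ \Psi_{\wh x}$ and separate the linear part (the derivative at $0$) from the nonlinear remainder $h$. First, I would verify that $G_{\wh x}$ is a diffeomorphism from $R[10Q_\ve(\wh x)]$ onto its image: the discussion preceding Lemma~\ref{Lemma-temperedness} already shows $\Psi_{\wh x}(R[10Q_\ve(\wh x)]) \subset E_{x_{-1}}$ and that $g_{\wh x}$ is a diffeomorphism there, so composing with the two globally defined affine maps $\Psi_{\wh x}$, $\Psi_{\wh f^{-1}(\wh x)}^{-1}$ gives the claim. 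Then set $A := (dG_{\wh x})_0$ and $h(t) := G_{\wh x}(t) - At$, so that $h(0)=0$ and $dh_0 = 0$ automatically; also $G_{\wh x}(0) = \Psi_{\wh f^{-1}(\wh x)}^{-1}(g_{\wh x}(x_0)) = \Psi_{\wh f^{-1}(\wh x)}^{-1}(x_{-1}) = 0$ since $g_{\wh x}(x_0) = x_{-1}$, confirming $G_{\wh x}(t) = At + h(t)$.

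For part (1), note that $G_{\wh x}$ is the inverse of $F_{\wh f^{-1}(\wh x)}$ where both are defined, so $A = (dF_{\wh f^{-1}(\wh x)})_0^{-1}$, and Lemma~\ref{Lemma-linear-reduction} gives $|(dF_{\wh f^{-1}(\wh x)})_0| > e^\chi$, whence $|A| < e^{-\chi}$. For part (2), the content is the bound $\|h\|_{1+\beta/2} < \ve$ on $R[10Q_\ve(\wh x)]$, which I would get from three ingredients: (i) $\|h\|_0$ is controlled by the $C^1$ size of $dh$ times the length $10Q_\ve(\wh x)$ of the interval, using $dh_0 = 0$; (ii) $\|dh\|_0 = \sup |dG_{\wh x}(t) - A| \leq \Hol{\beta/2}(dG_{\wh x}) \cdot (10Q_\ve(\wh x))^{\beta/2}$ again since $dG_{\wh x}(0) = A$; (iii) a H\"older bound $\Hol{\beta}(dG_{\wh x}) \lesssim \mathfrak{K} \cdot u(\wh x)^? u(\wh f^{-1}(\wh x))^? \rho(\wh x)^{-?a}$ obtained from the chain rule applied to $\Psi_{\wh f^{-1}(\wh x)}^{-1} \circ g_{\wh x} \circ \Psi_{\wh x}$: the affine charts contribute factors $u(\wh f^{-1}(\wh x))$ and $u(\wh x)^{-1}$, while $g_{\wh x}$ contributes $\|g_{\wh x}\|_0 \cdot \Hol{\beta}(dg_{\wh x})$-type terms bounded via (A2)--(A3) by powers of $d(x_{-1},\mathfs S)^{-a} \geq \rho(\wh x)^{-a}$. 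Composing these and converting from exponent $\beta$ to $\beta/2$ costs another power of the (small) interval length. The exact exponents $24/\beta$, $12/\beta$, $72a/\beta$ in the definition of $\wt Q_\ve(\wh x)$ are chosen precisely so that, after plugging in the trivial bounds $u(\wh x)Q_\ve(\wh x)^{\beta/24} \leq \ve^{1/8}$, $u(\wh f^{-1}(\wh x))Q_\ve(\wh x)^{\beta/12}\leq\ve^{1/4}$, $\rho(\wh x)^{-a}Q_\ve(\wh x)^{\beta/72}\leq\ve^{1/24}$ listed in the text, every factor becomes a small power of $\ve$ and the $\ve^{3/\beta}$ prefactor absorbs $\mathfrak K$ and the universal constants; the product is then $<\ve$ for $\ve$ small.

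Finally, $\|dG_{\wh x}\|_0 \leq |A| + \|dh\|_0 < e^{-\chi} + \ve < e^{-\chi/2}$ for $\ve$ small, giving the last assertion. The main obstacle is the bookkeeping in step (iii): one must carefully track how the chain rule distributes the H\"older constant of $df$ (equivalently $dg$) through the inverse branch and the two charts, making sure each term is dominated by the corresponding monomial in $u(\wh x)$, $u(\wh f^{-1}(\wh x))$, $\rho(\wh x)$ with an exponent no larger than what $\wt Q_\ve$ was designed to kill — the constants $24,12,72$ leave a comfortable margin, so the estimate is robust but the verification is the technical heart of the lemma.
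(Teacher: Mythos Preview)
Your proposal is correct and follows essentially the same route as the paper: define $A=(dG_{\wh x})_0$, invoke Lemma~\ref{Lemma-linear-reduction} for $|A|<e^{-\chi}$, and bound $\Hol{\beta}(dG_{\wh x})$ by the chain rule, then trade $\beta$ for $\beta/2$ using the interval length. One simplification you are missing: in the chain rule $(dG_{\wh x})_t=\tfrac{u(\wh f^{-1}(\wh x))}{u(\wh x)}\,g_{\wh x}'(\Psi_{\wh x}(t))$, assumption (A3) alone gives $\Hol{\beta}(dg_{\wh x})\leq\mathfrak K$ with no $\rho$--factor, and $u(\wh x)^{-1}\leq 1$ can be dropped, so only the single bound $u(\wh f^{-1}(\wh x))Q_\ve(\wh x)^{\beta/2}\leq 20\,\ve^{3/2}u(\wh f^{-1}(\wh x))^{-5}$ is needed; the other trivial bounds you list are not used here.
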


\begin{proof}
Write $\wh x=(x_n)_{n\in\Z}$. Since $10Q_\ve(\wh x)<\rho(\wh x)^a<2\mathfrak r(x_{-1})$,
it follows from the previous paragraph that the restriction of $G_{\wh x}$ to $R[10Q_\ve(\wh x)]$
is a diffeomorphism onto its image.
Now we check (1)--(2). Let $A:=(dG_{\wh x})_0$ and
$h:R[10Q_\ve(\wh x)]\to\R$ s.t. $G_{\wh x}(t)=At+h(t)$.
By Lemma \ref{Lemma-linear-reduction}, $|A|=|(dF_{\wh f^{-1}(\wh x)})_0|^{-1}<e^{-\chi}$.
Clearly $h(0)=h'(0)=0$, so it remains to estimate $\|h\|_{1+\beta/2}$.

\medskip
\noindent
{\sc Claim:} $|(dG_{\wh x})_{t_1}-(dG_{\wh x})_{t_2}|\leq \tfrac{\ve}{3}|t_1-t_2|^{\beta/2}$
for all $t_1,t_2\in R[10Q_\ve(\wh x)]$.

\medskip
Before proving the claim, let us show how to conclude (2).
If $\ve>0$ is small enough then $R[10Q_\ve(\wh x)]\subset R[1]$. Applying the claim with $t_2=0$,
we get $|h'(t)|\leq \frac{\ve}{3}|t|^{\beta/2}<\tfrac{\ve}{3}$. By the mean value inequality,
$|h(t)|\leq\tfrac{\ve}{3}|t|<\tfrac{\ve}{3}$, hence $\|h\|_{1+\beta/2}<\ve$.

\begin{proof}[Proof of the claim.]
We have $\Psi_{\wh x}(t_1),\Psi_{\wh x}(t_2)\in E_{x_{-1}}$. Using that $\Psi_{\wh x}$
is a contraction and assumption (A3), we get:
\begin{align*}
|(dG_{\wh x})_{t_1}-(dG_{\wh x})_{t_2}|=
\tfrac{u(\wh f^{-1}(\wh x))}{u(\wh x)}|g'(\Psi_{\wh x}(t_1))-g'(\Psi_{\wh x}(t_2))|\leq
\mathfrak K u(\wh f^{-1}(\wh x))|t_1-t_2|^\beta.
\end{align*}
Since $|t_1-t_2|<20Q_\ve(\wh x)$, if $\ve>0$ is small then:
\begin{align*}
\mathfrak K u(\wh f^{-1}(\wh x))|t_1-t_2|^{\beta/2}<
20\mathfrak K u(\wh f^{-1}(\wh x))\ve^{3/2}u(\wh f^{-1}(\wh x))^{-6}<20\mathfrak K\ve^{3/2}<\ve.
\end{align*}
This completes the proof of the claim.
\end{proof}

If $\ve>0$ is small enough then $\|dG_{\wh x}\|_0\leq |(dG_{\wh x})_0|+\|dh\|_0<e^{-\chi}+\ve<e^{-\chi/2}$.
\end{proof}

\subsection{The overlap condition}\label{section-overlap}

Our next goal is to identify when two Pesin charts $\Psi_{\wh x},\Psi_{\wh y}$
are close. Even when $\vt[\wh x]$ and $\vt[\wh y]$ are nearby points of $M$,
the distortions of $\Psi_{\wh x}$ and $\Psi_{\wh y}$ might be very different. 
The values controlling such distortions are $u(\wh x)$ and $u(\wh y)$,
so we need to compare them. Another requirement for Pesin charts to be close
is that their domains of definition have comparable sizes. Because of this, we 
consider Pesin charts with different domains, which we call {\em $\ve$--charts}.

\medskip
\noindent
{\sc $\ve$--chart:} An {\em $\ve$--chart} $\Psi_{\wh x}^p$ is the restriction
$\Psi_{\wh x}\restriction_{[-p,p]}$, where $0<p\leq Q_\ve(\wh x)$.

\medskip
Note that for each $\wh x\in{\rm NUE}_\chi$ there are infinitely many
$\ve$--charts centered at $\wh x$. 

\medskip
\noindent
{\sc $\ve$--overlap:} Two $\ve$--charts $\Psi_{\wh x_1}^{p_1},\Psi_{\wh x_2}^{p_2}$
are said to {\em $\ve$--overlap} if $\tfrac{p_1}{p_2}=e^{\pm\ve}$ and
$$
d(\vartheta[\wh x_1],\vartheta[\wh x_2])+|u(\wh x_1)^{-1}-u(\wh x_2)^{-1}|<(p_1p_2)^4.
$$
When this happens, we write $\Psi_{\wh x_1}^{p_1}\overset{\ve}{\approx}\Psi_{\wh x_2}^{p_2}$.

\medskip
Clearly, if $\Psi_{\wh x_1}^{p_1}\overset{\ve}{\approx}\Psi_{\wh x_2}^{p_2}$
then $\Psi_{\wh x_1}^{cp_1}\overset{\ve}{\approx}\Psi_{\wh x_2}^{cp_2}$
for all $c>1$ s.t. $cp_i\leq Q_\ve(\wh x_i)$.
The next proposition shows that $\ve$--overlap is strong enough to guarantee
that the Pesin charts are close.

\begin{proposition}\label{Lemma-overlap}
The following holds for $\ve>0$ small enough.
If $\Psi_{\wh x_1}^{p_1}\overset{\ve}{\approx}\Psi_{\wh x_2}^{p_2}$ then:
\begin{enumerate}[{\rm (1)}]
\item {\sc Control of $u$:} $\frac{u(\wh x_1)}{u(\wh x_2)}=e^{\pm(p_1p_2)^3}$.
\item {\sc Overlap:} $\Psi_{\wh x_i}(R[e^{-2\ve}p_i])\subset \Psi_{\wh x_j}(R[p_j])$ for $i,j=1,2$.
\item {\sc Change of coordinates:} For $i,j=1,2$ it holds
$\|\Psi_{\wh x_i}^{-1}\circ\Psi_{\wh x_j}-{\rm Id}\|_{2}<\ve(p_1 p_2)^3$
where the norm is taken in $R[1]$.
\end{enumerate}
\end{proposition}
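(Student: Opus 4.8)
The plan is to exploit that, in this one-dimensional setting, the Pesin charts $\Psi_{\wh x}(t)=u(\wh x)^{-1}t+\vt[\wh x]$ are genuine affine maps, so all three assertions reduce to elementary estimates on the numbers $u(\wh x_1),u(\wh x_2)$ and $\vt[\wh x_1],\vt[\wh x_2]$. Write $s_i:=u(\wh x_i)^{-1}\in(0,1)$, so the $\ve$--overlap hypothesis reads $p_1/p_2=e^{\pm\ve}$, $d(\vt[\wh x_1],\vt[\wh x_2])<(p_1p_2)^4$ and $|s_1-s_2|<(p_1p_2)^4$. First I would record two a priori bounds. Since $0<p_i\le Q_\ve(\wh x_i)$, the trivial bound $u(\wh x)Q_\ve(\wh x)^{\beta/24}\le\ve^{1/8}$ gives
\[
u(\wh x_i)\le\ve^{1/8}\,p_i^{-\beta/24},\qquad\text{i.e.}\qquad s_i\ge\ve^{-1/8}\,p_i^{\beta/24};
\]
and $p_i\le Q_\ve(\wh x_i)\le\ve^{3/\beta}$, combined with $p_1/p_2=e^{\pm\ve}$, makes $p_1,p_2$ comparable (within $e^\ve\le2$) and forces each $p_i$ below any prescribed positive power of $\ve$. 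The common mechanism behind all three items is that the overlap threshold $(p_1p_2)^4$ beats the error terms by a positive power of $\ve$: writing $(p_1p_2)^4=(p_1p_2)^3\cdot(p_1p_2)$, using $p_1p_2\le2p_i^2$ and $p_i^{2-\beta/24}\le\ve^{(3/\beta)(2-\beta/24)}=\ve^{6/\beta-1/8}$, one gets $u(\wh x_i)(p_1p_2)^4\le2\ve^{6/\beta}(p_1p_2)^3$, with $6/\beta>1$.

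For item (1), I would bound
\[
\Bigl|\log\tfrac{u(\wh x_1)}{u(\wh x_2)}\Bigr|=\Bigl|\log\tfrac{s_2}{s_1}\Bigr|\le\frac{|s_1-s_2|}{\min\{s_1,s_2\}}\le\ve^{1/8}(p_1p_2)^4\min\{p_1,p_2\}^{-\beta/24},
\]
and then invoke the mechanism to estimate the right-hand side by $2\ve^{6/\beta}(p_1p_2)^3<(p_1p_2)^3$ for $\ve$ small. I would keep the sharper inequality $\bigl|\log\tfrac{u(\wh x_1)}{u(\wh x_2)}\bigr|\le2\ve^{6/\beta}(p_1p_2)^3$ for use below.

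For item (2), since $\Psi_{\wh x}(R[r])=B(\vt[\wh x],r/u(\wh x))$, it suffices to verify $d(\vt[\wh x_i],\vt[\wh x_j])+\tfrac{e^{-2\ve}p_i}{u(\wh x_i)}\le\tfrac{p_j}{u(\wh x_j)}$. Here $p_i/p_j=e^{\pm\ve}$ and item (1) give $\tfrac{e^{-2\ve}p_i}{u(\wh x_i)}\le e^{-\ve+2\ve^{6/\beta}(p_1p_2)^3}\tfrac{p_j}{u(\wh x_j)}\le(1-\tfrac\ve4)\tfrac{p_j}{u(\wh x_j)}$ for $\ve$ small, while $d(\vt[\wh x_i],\vt[\wh x_j])<(p_1p_2)^4$ and $\tfrac{p_j}{u(\wh x_j)}\ge\ve^{-1/8}p_j^{1+\beta/24}$, so the mechanism (now producing $p_j^{7-\beta/24}\le\ve^{21/\beta-1/8}$) yields $(p_1p_2)^4\le\tfrac\ve4\cdot\tfrac{p_j}{u(\wh x_j)}$ for $\ve$ small; adding the two proves the inclusion, and the remaining case is symmetric. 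For item (3), a direct computation gives $\Psi_{\wh x_i}^{-1}\circ\Psi_{\wh x_j}(t)=\tfrac{u(\wh x_i)}{u(\wh x_j)}t+u(\wh x_i)(\vt[\wh x_j]-\vt[\wh x_i])$, an affine map; hence $\Psi_{\wh x_i}^{-1}\circ\Psi_{\wh x_j}-{\rm Id}$ has zero second derivative, and on $R[1]$ its norm is at most $2\bigl|\tfrac{u(\wh x_i)}{u(\wh x_j)}-1\bigr|+u(\wh x_i)\,d(\vt[\wh x_i],\vt[\wh x_j])$. The first term is $\le8\ve^{6/\beta}(p_1p_2)^3$ by the sharp form of item (1), the second is $\le2\ve^{6/\beta}(p_1p_2)^3$ by the mechanism, and the sum is $<\ve(p_1p_2)^3$ for $\ve$ small.

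The main obstacle is not conceptual — everything is an explicit affine computation, which is exactly the simplification over the higher-dimensional constructions, where one must instead use graph transforms. The only thing that needs care is the bookkeeping: one has to check that the single threshold $(p_1p_2)^4$ from the $\ve$--overlap definition simultaneously absorbs the distortion blow-up $u(\wh x_i)\le\ve^{1/8}p_i^{-\beta/24}$ in all three estimates, which is precisely why the exponent $4$ (rather than a smaller one) and the factor $\ve^{3/\beta}$ in the definition of $Q_\ve$ are there.
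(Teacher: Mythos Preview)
Your proof is correct and follows essentially the same route as the paper: both exploit that $\Psi_{\wh x}$ is affine, so (3) reduces to bounding $|u(\wh x_i)/u(\wh x_j)-1|$ and $u(\wh x_i)\,d(\vt[\wh x_1],\vt[\wh x_2])$, and (1)--(2) are byproducts. The only difference is in the bound on $u(\wh x_i)$: you use the ``trivial bound'' $u(\wh x_i)\le\ve^{1/8}p_i^{-\beta/24}$, whereas the paper uses the simpler (and coarser) estimate $u(\wh x_i)<\ve^{3/\beta}/Q_\ve(\wh x_i)\le\ve^{3/\beta}/(p_1p_2)$, obtained from $Q_\ve(\wh x)\le\ve^{3/\beta}u(\wh x)^{-24/\beta}$ together with $u>1$, $24/\beta>1$, and $p_1p_2<p_i\le Q_\ve(\wh x_i)$. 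This makes (1) a one-line computation $\bigl|\tfrac{u(\wh x_1)}{u(\wh x_2)}-1\bigr|=u(\wh x_1)|s_1-s_2|<\ve^{3/\beta}(p_1p_2)^3$ and streamlines (2)--(3) correspondingly, but your argument is equally valid.
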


\begin{proof} (1) Since $\ve>0$ is small, it is enough to prove that
$\left|\tfrac{u(\wh x_1)}{u(\wh x_2)}-1\right|<\ve^{3/\beta}(p_1p_2)^3$.
By assumption, $|u(\wh x_1)^{-1}-u(\wh x_2)^{-1}|<(p_1p_2)^4$.
Also $u(\wh x_1)<\tfrac{\ve^{3/\beta}}{Q_\ve(\wh x_1)}<\tfrac{\ve^{3/\beta}}{p_1p_2}$,
therefore
$$
\left|\tfrac{u(\wh x_1)}{u(\wh x_2)}-1\right|=
u(\wh x_1)|u(\wh x_1)^{-1}-u(\wh x_2)^{-1}|<\ve^{3/\beta}(p_1p_2)^3.
$$

\medskip
\noindent
(2) We prove that $\Psi_{\wh x_1}(R[e^{-2\ve}p_1])\subset \Psi_{\wh x_2}(R[p_2])$.
If $t\in R[e^{-2\ve}p_1]$ then
\begin{align*}
d(\Psi_{\wh x_1}(t),\Psi_{\wh x_2}(t))\leq
d(\vt[\wh x_1],\vt[\wh x_2])+|u(\wh x_1)^{-1}-u(\wh x_2)^{-1}|<(p_1p_2)^4,
\end{align*}
therefore $\Psi_{\wh x_1}(t)\in B(\Psi_{\wh x_2}(t),(p_1p_2)^4)$. We have
$B(\Psi_{\wh x_2}(t),(p_1p_2)^4)\subset \Psi_{\wh x_2}(B)$ where
$B=B(t,u(\wh x_2)(p_1p_2)^4)$. If $t'\in B$ then
$|t'|\leq |t|+u(\wh x_2)(p_1p_2)^4\leq (e^{-\ve}+\ve^{3/\beta})p_2<p_2$
for $\ve>0$ small enough, thus $B\subset R[p_2]$.

\medskip
\noindent
(3) By direct calculation,
$$
(\Psi_{\wh x_2}^{-1}\circ \Psi_{\wh x_1}-{\rm Id})(t)=
\left(\tfrac{u(\wh x_2)}{u(\wh x_1)}-1\right)t+u(\wh x_2)(\vt[\wh x_1]-\vt[\wh x_2])
$$
is a linear function. By part (1), the $C^2$ norm taken in $R[1]$ is 
\begin{align*}
&\, \|\Psi_{\wh x_2}^{-1}\circ \Psi_{\wh x_1}-{\rm Id}\|_{2}=
\|\Psi_{\wh x_2}^{-1}\circ \Psi_{\wh x_1}-{\rm Id}\|_{1}
\leq 2\left|\tfrac{u(\wh x_2)}{u(\wh x_1)}-1\right|+
u(\wh x_2)d(\vt[\wh x_1],\vt[\wh x_2])\\
&<2\ve^{3/\beta}(p_1p_2)^3+\ve^{3/\beta}(p_1p_2)^3<\ve(p_1p_2)^3.
\end{align*}
\end{proof}

\subsection{The map $G_{\wh x,\wh y}$}

Let $\wh x,\wh y\in{\rm NUE}_\chi$, and assume that
$\Psi_{\wh f^{-1}(\wh x)}^p\overset{\ve}{\approx}\Psi_{\wh y}^q$.
We want to change $\Psi_{\wh f^{-1}(\wh x)}$ by $\Psi_{\wh y}$ in
$G_{\wh x}$ and obtain a result similar to Theorem \ref{Thm-non-linear-Pesin}.

\medskip
\noindent
{\sc The map $G_{\wh x,\wh y}$:}
If $\Psi_{\wh f^{-1}(\wh x)}^p\overset{\ve}{\approx}\Psi_{\wh y}^q$,
let $G_{\wh x,\wh y}=\Psi_{\wh y}^{-1}\circ g_{\wh x}\circ \Psi_{\wh x}$
wherever this composition is well-defined.

\medskip
Note that $G_{\wh x,\wh y}$ is the representation of $g_{\wh x}$ in the charts
$\Psi_{\wh x}$ and $\Psi_{\wh y}$. Alternatively, by Proposition \ref{Lemma-overlap},
$G_{\wh x,\wh y}:=\Psi_{\wh y}^{-1}\circ\Psi_{\wh f^{-1}(\wh x)}\circ G_{\wh x}$
is a small perturbation of $G_{\wh x}$. The next result makes this claim more precise. 


\begin{theorem}\label{Thm-non-linear-Pesin-2}
The following holds for all $\ve>0$ small enough:
If $\wh x,\wh y\in{\rm NUE}_\chi$ and
$\Psi_{\wh f^{-1}(\wh x)}^p\overset{\ve}{\approx}\Psi_{\wh y}^q$, then
$G_{\wh x,\wh y}$ is well-defined in $R[10Q_\ve(\wh x)]$ and can be written as
$G_{\wh x,\wh y}(t)=At+h(t)$ where:
\begin{enumerate}[{\rm (1)}]
\item $|A|<e^{-\chi}$, cf. Theorem \ref{Thm-non-linear-Pesin}.
\item $|h(0)|<\ve (pq)^3$, $|dh_0|<\ve (pq)^3$, and
$\Hol{\beta/3}(dh)<\ve$ where the norm is taken in $R[10Q_\ve(\wh x)]$.
\end{enumerate}
In particular, $G_{\wh x,\wh y}$ contracts at least by a factor of $e^{-\chi/2}$.
\end{theorem}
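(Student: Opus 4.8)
The plan is to reduce everything to Theorem~\ref{Thm-non-linear-Pesin} through the factorization $G_{\wh x,\wh y}=\Psi_{\wh y}^{-1}\circ\Psi_{\wh f^{-1}(\wh x)}\circ G_{\wh x}$. This identity is valid on $R[10Q_\ve(\wh x)]$: by the bullet points preceding Theorem~\ref{Thm-non-linear-Pesin} the map $g_{\wh x}$ is defined on $\Psi_{\wh x}(R[10Q_\ve(\wh x)])\subset E_{x_{-1}}$, so $g_{\wh x}\circ\Psi_{\wh x}=\Psi_{\wh f^{-1}(\wh x)}\circ G_{\wh x}$ there, and post-composing with the globally defined affine diffeomorphism $L:=\Psi_{\wh y}^{-1}\circ\Psi_{\wh f^{-1}(\wh x)}$ shows that $G_{\wh x,\wh y}=L\circ G_{\wh x}$ is a diffeomorphism of $R[10Q_\ve(\wh x)]$ onto its image. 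Writing $L(s)=as+b$ and $G_{\wh x}(t)=A_0t+h_0(t)$ as in Theorem~\ref{Thm-non-linear-Pesin} (so $|A_0|<e^{-\chi}$, $h_0(0)=(dh_0)_0=0$, $\|h_0\|_{1+\beta/2}<\ve$), one obtains
\[
G_{\wh x,\wh y}(t)=L\bigl(G_{\wh x}(t)\bigr)=aA_0\,t+\bigl(ah_0(t)+b\bigr),
\]
so I would take $A:=aA_0=(dG_{\wh x,\wh y})_0$ and $h(t):=ah_0(t)+b$. From $\Psi_{\wh f^{-1}(\wh x)}^p\overset{\ve}{\approx}\Psi_{\wh y}^q$ and Proposition~\ref{Lemma-overlap}(1) we have $a=\tfrac{u(\wh y)}{u(\wh f^{-1}(\wh x))}=e^{\pm(pq)^3}$; and, using $u(\wh y)<\ve^{3/\beta}Q_\ve(\wh y)^{-1}\leq\ve^{3/\beta}q^{-1}$ together with the $\ve$--overlap inequality $d(\vt[\wh f^{-1}(\wh x)],\vt[\wh y])<(pq)^4$, one gets $|b|=u(\wh y)\,d(\vt[\wh f^{-1}(\wh x)],\vt[\wh y])<\ve^{3/\beta}p(pq)^3<\ve(pq)^3$ (as $p\leq\ve^{3/\beta}$).

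Granting this, item~(2) is immediate: $h(0)=b$ gives $|h(0)|<\ve(pq)^3$; $(dh)_0=a(dh_0)_0=0$ gives $|(dh)_0|<\ve(pq)^3$ a fortiori; and since $(dh)_t=a(dh_0)_t$ with $\Hol{\beta/2}(dh_0)<\ve$ on $R[10Q_\ve(\wh x)]$, interpolating the Hölder exponent over this interval, whose length is $20Q_\ve(\wh x)\leq 20\ve^{3/\beta}<1$, yields $\Hol{\beta/3}(dh)\leq|a|\,\Hol{\beta/2}(dh_0)\,(20Q_\ve(\wh x))^{\beta/6}<\ve$ once $\ve$ is small. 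The final contraction bound is then routine: $\|dG_{\wh x,\wh y}\|_0\leq|A|+\|dh\|_0<e^{-\chi}+2\ve<e^{-\chi/2}$.

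The delicate point is item~(1), $|A|<e^{-\chi}$, since $|a|$ may exceed $1$. Here I would use the exact value $|A_0|^2=|(dF_{\wh f^{-1}(\wh x)})_0|^{-2}=e^{-2\chi}\bigl(1-u(\wh x)^{-2}\bigr)$ from Lemma~\ref{Lemma-linear-reduction}, which combined with $|a|=e^{\pm(pq)^3}$ reduces the claim to $3(pq)^3u(\wh x)^2<1$. To prove the latter I would use the recursion $u(\wh x)^2=1+e^{2\chi}|df_{x_{-1}}|^{-2}u(\wh f^{-1}(\wh x))^2$ and (A2) to get $u(\wh x)^2\leq 1+e^{2\chi}\rho(\wh f^{-1}(\wh x))^{-2a}u(\wh f^{-1}(\wh x))^2$, bound $(pq)^3\leq 2Q_\ve(\wh f^{-1}(\wh x))^6$, and distribute the six copies of $Q_\ve(\wh f^{-1}(\wh x))$ using the trivial bounds $Q_\ve(\wh f^{-1}(\wh x))\leq\ve^{3/\beta}u(\wh f^{-1}(\wh x))^{-24/\beta}$ and $Q_\ve(\wh f^{-1}(\wh x))\leq\ve^{3/\beta}\rho(\wh f^{-1}(\wh x))^{72a/\beta}$, so that every surviving power of $u(\wh f^{-1}(\wh x))$ and of $\rho(\wh f^{-1}(\wh x))$ carries a favourable sign (this uses $\beta<1$, $u(\wh f^{-1}(\wh x))\geq 1$, $\rho(\wh f^{-1}(\wh x))\leq 1$). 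The result is $3(pq)^3u(\wh x)^2\leq 6(1+e^{2\chi})\ve^{18/\beta}$, which is $<1$ as soon as $\ve$ is small in terms of $\chi$. Carefully bookkeeping these exponents is the only real work; the rest is a mechanical combination of Theorem~\ref{Thm-non-linear-Pesin} with Proposition~\ref{Lemma-overlap}.
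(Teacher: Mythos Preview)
Your argument is correct and uses the same factorization $G_{\wh x,\wh y}=L\circ G_{\wh x}$ as the paper. The one genuine difference is your choice of linear part: you take $A:=(dG_{\wh x,\wh y})_0=aA_0$, whereas the paper takes $A:=(dG_{\wh x})_0=A_0$ and sets $h:=G_{\wh x,\wh y}-(dG_{\wh x})_0$. With the paper's choice, item~(1) is immediate from Theorem~\ref{Thm-non-linear-Pesin}, and the price is that $|dh_0|=|dL_0-1|\,|A_0|<\ve(pq)^3$ is a genuine (though trivial) estimate rather than an exact zero. Your choice reverses the trade-off: $dh_0=0$ for free, but you must do the careful computation with the exact value $|A_0|^2=e^{-2\chi}(1-u(\wh x)^{-2})$ and the recursion for $u(\wh x)^2$ to absorb the factor $|a|=e^{\pm(pq)^3}$. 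That computation is correct as you sketch it (two copies of $Q_\ve(\wh f^{-1}(\wh x))$ kill $\rho^{-2a}$ and $u^2$ via the trivial bounds, leaving a power of $\ve$), but it is work that the paper's decomposition avoids entirely. The phrase ``cf.\ Theorem~\ref{Thm-non-linear-Pesin}'' in the statement is a hint that $A$ is meant to be literally the same constant as there.
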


\begin{proof}
We write
$G_{\wh x,\wh y}=:H\circ G_{\wh x}$
and see $G_{\wh x,\wh y}$ as a small perturbation of $G_{\wh x}$.
By Theorem \ref{Thm-non-linear-Pesin},
$$
G_{\wh x}(0)=0,\ \|dG_{\wh x}\|_0<1,\ |(dG_{\wh x})_{t_1}-(dG_{\wh x})_{t_2}|\leq \ve|t_1-t_2|^{\beta/2}
\text{ for } t_1,t_2\in R[10Q_\ve(\wh x)]
$$
where the $C^0$ norm is taken in $R[10Q_\ve(\wh x)]$,
and by Proposition \ref{Lemma-overlap}(3) the function $H$ is affine with
$$
\|H-{\rm Id}\|_0<\ve(pq)^3,\ \|d(H-{\rm Id})\|_0<\ve(pq)^3 
$$
where the $C^0$ norms are taken in $R[1]$.

\medskip
It is easy to show that $G_{\wh x,\wh y}$ is well-defined in $R[10Q_\ve(\wh x)]$:
since $G_{\wh x}(R[10Q_\ve(\wh x)])\subset B(0,10Q_\ve(\wh x))\subset R[1]$,
Proposition \ref{Lemma-overlap}(3) implies that $G_{\wh x,\wh y}$ is well-defined.
Now we prove (1)--(2). Define $h:=G_{\wh x,\wh y}-(dG_{\wh x})_0=H\circ G_{\wh x}- (dG_{\wh x})_0$,
where $(dG_{\wh x})_0$ represents the linear functional on $\R$ defined by the derivative.
Then $|h(0)|=|H(0)|<\ve(pq)^3$
and $|dh_0|=|dH_0-1||(dG_{\wh x})_0|<\ve(pq)^3$.
Finally,
if $\ve>0$ is small enough then for all $t_1,t_2\in R[10Q_\ve(\wh x)]$ we have
\begin{align*}
&|dh_{t_1}-dh_{t_2}|=|dH_{G_{\wh x}(t_1)}(dG_{\wh x})_{t_1}-dH_{G_{\wh x}(t_2)}(dG_{\wh x})_{t_2}|
=\|dH\|_0|(dG_{\wh x})_{t_1}-(dG_{\wh x})_{t_2}|\\
&\leq 2\ve|t_1-t_2|^{\beta/2}<\ve|t_1-t_2|^{\beta/3}.
\end{align*}
This completes the proof of (2). In particular, if $\ve>0$ is 
small enough then $\|dh\|_0\leq \ve(pq)^3+\ve(10Q_\ve(\wh x))^{\beta/3}<\ve$ and hence
$|G_{\wh x,\wh y}(t_1)-G_{\wh x,\wh y}(t_2)|\leq (|A|+\|dh\|_0)|t_1-t_2|\leq (e^{-\chi}+\ve)|t_1-t_2|\leq e^{-\chi/2}|t_1-t_2|$
for all $t_1,t_2\in R[10Q_\ve(\wh x)]$.
\end{proof}

\subsection{$\ve$--generalized pseudo-orbits and the parameter $q_\ve(\wh x)$}

Now we define when we can pass from one $\ve$--chart to another via the action of $\wh f^{-1}$.
We will define two such notions, one weak and one strong. While in
\cite{Sarig-JAMS,Lima-Sarig,Lima-Matheus} the authors only define one notion (similar
to the strong notion presented below), here we also require a weaker one that will be 
relevant for us in Section \ref{Section-symbolic-dynamics}.

\medskip
\noindent
{\sc Weak edge $v\overset{\ve}{\dashleftarrow}w$:} Given $\ve$--charts $v=\Psi_{\wh y}^q$
and $w=\Psi_{\wh x}^p$, we draw a weak edge from $w$ to $v$ if:
\begin{enumerate}
\item[(WE1)] {\sc Overlap:} $\Psi_{\wh f^{-1}(\wh x)}^q\overset{\ve}{\approx}\Psi_{\wh y}^q$.
\item[(WE2)] {\sc Control of parameters:} $p\leq e^\ve q$.
\end{enumerate}
When this happens, we write $v\overset{\ve}{\dashleftarrow}w$. 

\medskip
Clearly, if $\Psi_{\wh y}^q\overset{\ve}{\dashleftarrow}\Psi_{\wh x}^p$ then
$\Psi_{\wh y}^{cq}\overset{\ve}{\dashleftarrow}\Psi_{\wh x}^{cp}$ for all $c>1$ s.t.
$cq\leq Q_\ve(\wh y)$ and $cp\leq Q_\ve(\wh x)$.
For $\ve>0$ small, define $\delta_\ve:=e^{-\ve n}\in I_\ve$
where $n$ is the unique positive integer s.t. $e^{-\ve n}<\ve\leq e^{-\ve(n-1)}$.
In particular, $\delta_\ve<\ve$.

\medskip
\noindent
{\sc Edge $v\overset{\ve}{\leftarrow}w$:} Given $\ve$--charts $v=\Psi_{\wh y}^q$
and $w=\Psi_{\wh x}^p$, we draw an edge from $w$ to $v$ if the following holds:
\begin{enumerate}[.......]
\item[(E1)] {\sc Overlap:} $\Psi_{\wh f^{-1}(\wh x)}^q\overset{\ve}{\approx}\Psi_{\wh y}^q$.
\item[(E2)] {\sc Control of parameters:}
\begin{enumerate}[i .....]
\item[(E2.1)] $d(\vt_1[\wh y],\vt_0[\wh x])<q$.
\item[(E2.2)] $\tfrac{u(\wh f(\wh y))}{u(\wh x)}=e^{\pm q}$.
\item[(E2.3)] $p=\min\{e^\ve q,\delta_\ve Q_\ve(\wh x)\}$.
\end{enumerate}
\end{enumerate}
When this happens, we write $v\overset{\ve}{\leftarrow}w$.

\medskip
It is not hard to see that condition (E2.1) follows from (E1) and assumption (A2), but for reference purposes
we write it separately. The parameters $p,q$ are the sizes of unstable manifolds
in the charts, and the greedy recursion in (E2.3) implies that, fixed an unstable
manifold at $\wh y$, the unstable manifold at $\wh x$ is as big as possible.
This maximality is crucial to prove the
inverse theorem (Theorem \ref{Thm-inverse}).

\begin{remark}\label{Remark-overlap}
Since $f$ is nonuniformly expanding, our definition of edge is different from
those in \cite{Sarig-JAMS,Lima-Sarig,Lima-Matheus} in two senses.
On one hand, we only need to consider one overlap 
and one recursive relation. On the other hand, the {\em lack of symmetry}
between $f$ and its inverse requires us to control some parameters separately,
as stated in (E2.1) and (E2.2).
\end{remark}

\begin{lemma}\label{Lemma-edge}
The following holds for all $\ve>0$ small enough. If $\Psi_{\wh y}^q\overset{\ve}{\dashleftarrow}\Psi_{\wh x}^p$ then:
\begin{enumerate}[{\rm (1)}]
\item $G_{\wh x,\wh y}(R[p])\subset R[q]$. 
\item If $x\in \Psi_{\wh x}(R[p])$ then $g_{\wh x}(x)$ is the unique $y\in \Psi_{\wh y}(R[q])$
s.t. $f(y)=x$.
\end{enumerate}
\end{lemma}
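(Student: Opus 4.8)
The plan is to reduce everything to the size estimate on $G_{\wh x,\wh y}$ from Theorem \ref{Thm-non-linear-Pesin-2} together with the overlap estimates of Proposition \ref{Lemma-overlap}. First I would note that the hypothesis $\Psi_{\wh y}^q\overset{\ve}{\dashleftarrow}\Psi_{\wh x}^p$ unpacks, via (WE1)--(WE2), into two facts: the $\ve$--overlap $\Psi_{\wh f^{-1}(\wh x)}^q\overset{\ve}{\approx}\Psi_{\wh y}^q$ (so Theorem \ref{Thm-non-linear-Pesin-2} applies with the roles $p\rightsquigarrow q$, $q\rightsquigarrow q$, giving $G_{\wh x,\wh y}(t)=At+h(t)$ on $R[10Q_\ve(\wh x)]$ with $|A|<e^{-\chi}$ and $\|h\|$-bounds of order $\ve q^6$ and $\Hol{\beta/3}(dh)<\ve$), and the parameter bound $p\leq e^\ve q\leq Q_\ve(\wh x)$ (since $p\leq Q_\ve(\wh x)$ by definition of an $\ve$--chart, and in fact $p\le e^\ve q$ forces $R[p]\subset R[10Q_\ve(\wh x)]$, so $G_{\wh x,\wh y}$ is defined on all of $R[p]$).

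For part (1): take $t\in R[p]$. Then $|G_{\wh x,\wh y}(t)|\le |A||t|+|h(t)|\le e^{-\chi}p + (|h(0)|+\|dh\|_0\,|t|)$. Using $|h(0)|<\ve(q\cdot q)^3=\ve q^6$, $\|dh\|_0<\ve$ (the ``in particular'' clause of Theorem \ref{Thm-non-linear-Pesin-2}), and $|t|\le p\le e^\ve q$, this is bounded by $e^{-\chi}e^\ve q + \ve q^6 + \ve e^\ve q$. Since $q\le Q_\ve(\wh x)\le\ve^{3/\beta}<1$ we have $q^6\le q$, so the whole thing is at most $(e^{-\chi+\ve}+2\ve e^\ve)q$, which is $<q$ once $\ve$ is small enough that $e^{-\chi+\ve}+2\ve e^\ve<1$ — true since $\chi>0$ is fixed. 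Hence $G_{\wh x,\wh y}(R[p])\subset R[q]$. (One should double-check the bookkeeping: Theorem \ref{Thm-non-linear-Pesin-2}'s $(pq)^3$ becomes $(qq)^3=q^6$ here because both superscripts in the weak-edge overlap are $q$; this is the only place the notational substitution matters.)

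For part (2): let $x\in\Psi_{\wh x}(R[p])$, say $x=\Psi_{\wh x}(t)$ with $t\in R[p]$. By the third bulleted item preceding Lemma \ref{Lemma-temperedness}, any point of $\Psi_{\wh x}(R[10Q_\ve(\wh x)])$ lies in $g_{\wh f(\wh x)}(E_{x_0})$ and is the unique preimage under $f$ of its image lying in that set; in particular $g_{\wh x}(x)$ is well-defined and $f(g_{\wh x}(x))=x$. It remains to see $g_{\wh x}(x)\in\Psi_{\wh y}(R[q])$. But $\Psi_{\wh y}^{-1}(g_{\wh x}(x)) = \Psi_{\wh y}^{-1}\circ g_{\wh x}\circ\Psi_{\wh x}(t) = G_{\wh x,\wh y}(t)\in R[q]$ by part (1), so $g_{\wh x}(x)\in\Psi_{\wh y}(R[q])$. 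For uniqueness: if $y\in\Psi_{\wh y}(R[q])$ also satisfies $f(y)=x$, then $y=\Psi_{\wh y}(s)$ with $s\in R[q]$; using the overlap Proposition \ref{Lemma-overlap}(2) applied to $\Psi_{\wh f^{-1}(\wh x)}^q\overset{\ve}{\approx}\Psi_{\wh y}^q$, the point $y$ lies in $\Psi_{\wh f^{-1}(\wh x)}(R[10Q_\ve(\wh x)])\subset E_{x_{-1}}$, where $g_{\wh x}$ is the inverse branch of $f$ sending $x$ (which is in $D_{x_0}$, hence $f$ is injective there by (A1)) to the unique preimage — so $y=g_{\wh x}(x)$. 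The main obstacle I expect is this uniqueness bookkeeping: one must make sure the single inverse branch $g_{\wh x}=g_{x_{-1}}$ is the relevant one on the whole neighborhood containing both $\Psi_{\wh y}(R[q])$ and $\Psi_{\wh x}(R[p])$, which is exactly what the $\ve$--overlap + the domain inclusions established before Lemma \ref{Lemma-temperedness} guarantee; everything else is a one-line triangle-inequality estimate.
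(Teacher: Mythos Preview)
Your approach is essentially the paper's, and part~(1) and the existence half of part~(2) are correct. The bookkeeping in your uniqueness argument, however, has slipped: you place $y$ in $\Psi_{\wh f^{-1}(\wh x)}(R[10Q_\ve(\wh x)])\subset E_{x_{-1}}$, but $E_{x_{-1}}=B(f(x_{-1}),2\mathfrak r(x_{-1}))=B(x_0,2\mathfrak r(x_{-1}))$ is a ball around $x_0$, not around $x_{-1}$, so $y$ (which is near $x_{-1}$) does not lie there. Likewise the appeal to injectivity of $f$ on $D_{x_0}$ is misplaced, since $y$ is not in $D_{x_0}$.

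What you actually need is $y\in g_{\wh x}(E_{x_{-1}})$, on which $f$ is injective because $g_{\wh x}:E_{x_{-1}}\to g_{\wh x}(E_{x_{-1}})$ is a diffeomorphism. The paper gets this via Proposition~\ref{Lemma-overlap}(2) and the third bulleted item before Lemma~\ref{Lemma-temperedness} applied at $\wh f^{-1}(\wh x)$:
\[
\Psi_{\wh y}(R[q])\subset \Psi_{\wh f^{-1}(\wh x)}(R[e^{2\ve}q])\subset \Psi_{\wh f^{-1}(\wh x)}(R[10Q_\ve(\wh f^{-1}(\wh x))])\subset g_{\wh x}(E_{x_{-1}}).
\]
(Note the $Q_\ve(\wh f^{-1}(\wh x))$, not $Q_\ve(\wh x)$.) Alternatively, you can argue that $\Psi_{\wh y}(R[q])\subset D_{x_{-1}}$ and use injectivity of $f$ on $D_{x_{-1}}$ from (A1). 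Either way, once the correct neighborhood around $x_{-1}$ is identified, uniqueness is immediate; your intuition that this is ``just bookkeeping'' is right, but the specific sets you named are the wrong ones.
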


\begin{proof}
By Theorem \ref{Thm-non-linear-Pesin-2}
and (WE2), $G_{\wh x,\wh y}(R[p])\subset B(G_{\wh x,\wh y}(0),e^{-\frac{\chi}{2}}p)\subset
R[\ve q^6+e^{-\frac{\chi}{2}}p]\subset R[q]$, since
$\ve q^6+e^{-\frac{\chi}{2}}p<\ve q+e^{-\frac{\chi}{2}+\ve}q=(\ve+e^{-\frac{\chi}{2}+\ve})q<q$
for $\ve>0$ small enough. This proves part (1). Now take $x=\Psi_{\wh x}(t)\in \Psi_{\wh x}(R[p])$
and let $y=g_{\wh x}(x)=(\Psi_{\wh y}\circ G_{\wh x,\wh y})(t)$. By definition $f(y)=x$, and by part (1)
it holds $y\in  \Psi_{\wh y}(R[q])$. This proves the existence of $y$. To prove its uniqueness, note that
$$
\Psi_{\wh y}(R[q])\subset \Psi_{\wh f^{-1}(\wh x)}(R[e^{2\ve} q])\subset
\Psi_{\wh f^{-1}(\wh x)}(R[10Q_\ve(\wh f^{-1}(\wh x))])\subset g_{\wh x}(E_{\vt_{-1}[\wh x]}),
$$
where in the first inclusion we used Proposition \ref{Lemma-overlap}(2) and in the last
we used the third item proved before Lemma \ref{Lemma-temperedness}. Since
$g_{\wh x}:E_{\vt_{-1}[\wh x]}\to g_{\wh x}(E_{\vt_{-1}[\wh x]})$ is a diffeomorphism, there
is at most one $y\in g_{\wh x}(E_{\vt_{-1}[\wh x]})$ s.t. $f(y)=x$.
\end{proof}

\medskip
\noindent
{\sc $\ve$--generalized pseudo-orbit ($\ve$--gpo):} An {\em $\ve$--generalized pseudo-orbit {\rm (}$\ve$--gpo{\rm )}}
is a sequence $\un{v}=\{v_n\}_{n\in\Z}$ of $\ve$--charts
s.t. $v_n\overset{\ve}{\leftarrow}v_{n+1}$ for all $n\in\Z$.
A {\em weak $\ve$--gpo} is a sequence $\un{v}=\{v_n\}_{n\in\Z}$ of $\ve$--charts
s.t. $v_n\overset{\ve}{\dashleftarrow}v_{n+1}$ for all $n\in\Z$.

\medskip
By definition, a necessary condition for drawing a weak edge
$\Psi_{\wh y}^q\overset{\ve}{\dashleftarrow}\Psi_{\wh x}^p$
is that $p\leq e^{\ve}q$. We would like to draw an edge
$\Psi_{\wh x}^{Q_\ve(\wh x)}\overset{\ve}{\dashleftarrow}\Psi_{\wh f(\wh x)}^{Q_\ve(\wh f(\wh x))}$,
but in general we cannot, because $\tfrac{Q_\ve(\wh f(\wh x))}{Q_\ve(\wh x)}$ 
might be bigger than $e^{\ve}$. To bypass this, we introduce the parameter $q_\ve(\wh x)$ below.

\medskip
\noindent
{\sc Parameter $q_\ve(\wh x)$:} For $\wh x\in{\rm NUE}_\chi^*$, let
$q_\ve(\wh x):=\delta_\ve\min\{e^{\ve|n|}Q_\ve(\wh f^n(\wh x)):n\leq 0\}$.

\medskip
The above minimum is the greedy way of defining values in $I_\ve$ smaller than $\ve Q_\ve$
with the required regularity property, as we now show.

\begin{lemma}\label{Lemma-q}
For all $\wh x\in{\rm NUE}_\chi^*$, the following holds:
\begin{enumerate}[{\rm (1)}]
\item {\sc Good definition:} $0<q_\ve(\wh x)<\ve Q_\ve(\wh x)$.
\item {\sc Greedy algorithm:}
$q_\ve(\wh f^n(\wh x))=\min\{e^\ve q_\ve(\wh f^{n-1}(\wh x)),\delta_\ve Q_\ve(\wh f^n(\wh x))\}$,
$\forall n\in\Z$.
\end{enumerate}
\end{lemma}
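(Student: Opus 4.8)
The plan is to read off both statements directly from the definition $q_\ve(\wh x)=\delta_\ve\min\{e^{\ve|n|}Q_\ve(\wh f^n(\wh x)):n\le 0\}$, using the Tempering Kernel Lemma (Lemma~\ref{Lemma-temperedness}) as the only nontrivial input. The single point that needs care is that the infimum defining $q_\ve$ is genuinely a minimum (i.e.\ attained) and strictly positive; this is exactly where the hypothesis $\wh x\in{\rm NUE}_\chi^*$, rather than merely $\wh x\in{\rm NUE}_\chi$, is used.

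For part~(1), I would apply Lemma~\ref{Lemma-temperedness} to obtain $\lim_{n\to-\infty}\tfrac1{|n|}\log Q_\ve(\wh f^n(\wh x))=0$, whence
\[
\tfrac{1}{|n|}\log\bigl(e^{\ve|n|}Q_\ve(\wh f^n(\wh x))\bigr)=\ve+\tfrac1{|n|}\log Q_\ve(\wh f^n(\wh x))\longrightarrow\ve>0\qquad(n\to-\infty).
\]
Hence $e^{\ve|n|}Q_\ve(\wh f^n(\wh x))\to+\infty$ as $n\to-\infty$; since all these numbers are positive and only finitely many of them lie below any given bound, the minimum over $n\le 0$ is attained and strictly positive, so $q_\ve(\wh x)>0$. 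On the other hand the term $n=0$ of the minimum equals $Q_\ve(\wh x)$, so $\min\{\,\cdot\,\}\le Q_\ve(\wh x)$ and therefore $q_\ve(\wh x)\le\delta_\ve Q_\ve(\wh x)<\ve Q_\ve(\wh x)$, using $\delta_\ve<\ve$. This proves~(1).

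For part~(2), I would first note that $q_\ve(\wh f^n(\wh x))$ is well defined for every $n\in\Z$: the limit in Lemma~\ref{Lemma-temperedness} is unaffected by a fixed shift of the index, so it holds along the entire $\wh f$--orbit of $\wh x$, and the argument of part~(1) then applies at each $\wh f^n(\wh x)$. Unwinding the definition and substituting $k=n+m$ (so that $|m|=n-k$ when $m\le 0$) gives
\[
q_\ve(\wh f^n(\wh x))=\delta_\ve\,\min\bigl\{e^{\ve(n-k)}Q_\ve(\wh f^k(\wh x)):k\le n\bigr\}.
\]
Splitting off the term $k=n$ and factoring $e^{\ve}$ out of the rest,
\[
\min\bigl\{e^{\ve(n-k)}Q_\ve(\wh f^k(\wh x)):k\le n\bigr\}
=\min\Bigl\{Q_\ve(\wh f^n(\wh x)),\ e^{\ve}\min\bigl\{e^{\ve(n-1-k)}Q_\ve(\wh f^k(\wh x)):k\le n-1\bigr\}\Bigr\}.
\]
By the same unwinding at level $n-1$, the inner minimum equals $\delta_\ve^{-1}q_\ve(\wh f^{n-1}(\wh x))$; multiplying through by $\delta_\ve$ yields
\[
q_\ve(\wh f^n(\wh x))=\min\bigl\{\delta_\ve Q_\ve(\wh f^n(\wh x)),\ e^{\ve}q_\ve(\wh f^{n-1}(\wh x))\bigr\},
\]
which is the asserted greedy recursion.

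I do not anticipate any genuine obstacle here: part~(2) is a bookkeeping identity for minima of index--shifted sequences, and the only substantive ingredient is the tempering estimate of Lemma~\ref{Lemma-temperedness}, invoked in part~(1) precisely to ensure that the minimum in the definition of $q_\ve$ is attained and nonzero.
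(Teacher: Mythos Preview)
Your proof is correct and follows essentially the same approach as the paper: invoke Lemma~\ref{Lemma-temperedness} to see the infimum defining $q_\ve$ is positive and attained, then split the minimum at the last index to derive the recursion. Your justification that the minimum is attained (the terms tend to $+\infty$, so only finitely many lie below any bound) is in fact slightly more direct than the paper's, which instead appeals to the discreteness of $I_\ve$ away from~$0$.
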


\begin{proof}
By Lemma \ref{Lemma-temperedness}, $\inf\{e^{\ve|n|}Q_\ve(\wh f^n(\wh x)):n\leq 0\}>0$.
Since zero is the only accumulation point of $I_\ve$,
$q_\ve(\wh x)$ is well-defined and positive.
It is clear that $q_\ve(\wh x)\leq \delta_\ve Q_\ve(\wh x)<\ve Q_\ve(\wh x)$,
hence (1) is proved. For (2), fix $n\in\Z$ and note that
\begin{align*}
&\ q_\ve(f^n(\wh x))=\delta_\ve\min\{e^{\ve|m|}Q_\ve(\wh f^m(\wh f^n(\wh x))):m\leq 0\}\\
&=\min\{\delta_\ve \min\{e^{\ve|m|}Q_\ve(\wh f^{m+n}(\wh x)):m\leq -1\},
\delta_\ve Q_\ve(\wh f^n(\wh x))\}\\
&=\min\{e^\ve\delta_\ve\min\{e^{\ve|m|}Q_\ve(\wh f^m(\wh f^{n-1}(x))):m\leq 0\},
\delta_\ve Q_\ve(\wh f^n(\wh x))\}\\
&=\min\{e^\ve q_\ve(\wh f^{n-1}(\wh x)),\delta_\ve Q_\ve(\wh f^n(\wh x))\}.
\end{align*}
\end{proof}

\medskip
\noindent
{\sc The set ${\rm NUE}_\chi^\#$:} It is the set of $\wh x\in{\rm NUE}_\chi^*$ s.t.
$\limsup_{n\to+\infty}q_\ve(\wh f^n(\wh x))>0$ and $\limsup_{n\to-\infty}q_\ve(\wh f^n(\wh x))>0$.

\medskip
Note that, while ${\rm NUE}_\chi^*$ is defined by a set of conditions on the past orbit of $\wh x$,
the set ${\rm NUE}_\chi^\#$ is defined by conditions {\em both} on the past and on the future.
This additional condition is important for the proof of Theorem \ref{Thm-widehat-pi}.

\subsection{Stable and unstable sets of weak $\ve$--gpo's}\label{Section-stable-sets}

Call a sequence ${\un v}^+=\{v_n\}_{n\geq 0}$ a {\em positive weak $\ve$--gpo} if
$v_n\overset{\ve}{\dashleftarrow}v_{n+1}$ for all $n\geq 0$.
Similarly, a {\em negative weak $\ve$--gpo} is a sequence ${\un v}^-=\{v_n\}_{n\leq 0}$
s.t. $v_{n-1}\overset{\ve}{\dashleftarrow}v_n$ for all $n\leq 0$.
Remember $\vt_n:\wh M\to M$, $\vt_n[(x_k)_{k\in\Z}]=x_n$.

\medskip
\noindent
{\sc Stable/unstable set of positive/negative weak $\ve$--gpo:} The {\em stable set}
of a positive weak $\ve$--gpo ${\un v}^+=\{\Psi_{\wh x_n}^{p_n}\}_{n\geq 0}$ is 
$$
V^s[{\un v}^+]:=\{\wh x\in\wh M:\vt_n[\wh x]\in\Psi_{\wh x_n}(R[p_n]),\forall n\geq 0\}.
$$
The {\em unstable set} of a negative weak $\ve$--gpo ${\un v}^-=\{\Psi_{\wh x_n}^{p_n}\}_{n\leq 0}$ is 
$$
V^u[{\un v}^-]:=\{\wh x\in\wh M:\vt_n[\wh x]\in\Psi_{\wh x_n}(R[p_n]),\forall n\leq 0\}.
$$
For a weak $\ve$--gpo $\un{v}=\{v_n\}_{n\in\Z}$, let $V^s[\un v]:=V^s[\{v_n\}_{n\geq 0}]$
and $V^u[\un v]:=V^u[\{v_n\}_{n\leq 0}]$.

\medskip
\noindent
{\sc Stable/unstable sets at $v$:} Given an $\ve$--chart $v$, a {\em stable set at $v$}
is any $V^s[\un v^+]$ where $\un v^+$ is a positive weak $\ve$--gpo with $v_0=v$.
Similarly, an {\em unstable set at $v$} is any $V^u[\un v^-]$ where $\un v^-$ is a negative
weak $\ve$--gpo with $v_0=v$. 

\medskip
In the sequel, the notations $\un v^+,\{v_n\}_{n\geq 0}$ always mean a positive weak $\ve$--gpo,
and the notations $\un v^-,\{v_n\}_{n\geq 0}$ always mean a negative weak $\ve$--gpo.
The next lemma gives alternative characterizations of stable and unstable sets.

\begin{lemma}\label{Lemma-stable-sets}
The following holds for all $\ve>0$ small enough.
\begin{enumerate}[{\rm (1)}]
\item If $\un v^+=\{\Psi_{\wh x_n}^{p_n}\}_{n\geq 0}$ is a positive weak $\ve$--gpo
then $V^s[\un v^+]=\vt^{-1}[x]$, where $x\in M$ is uniquely
defined by $f^n(x)\in \Psi_{\wh x_n}(R[p_n])$ for all $n\geq 0$.
\item If $\un v^-=\{\Psi_{\wh x_n}^{p_n}\}_{n\leq 0}$ is a negative weak $\ve$--gpo
then 
\begin{align*}
V^u[\un v^-]&=\{\wh x=(x_n)_{n\in\Z}\in\wh M:x_0\in \Psi_{\wh x_0}(R[p_0])\text{ and }
x_{n-1}=g_{\wh x_n}(x_n),\forall n\leq 0 \}\\
&=\{(\Psi_{\wh x_n}(t_n))_{n\in\Z}\in\wh M:t_0\in R[p_0]\text{ and }
t_{n-1}=G_{\wh x_n,\wh x_{n-1}}(t_n),\forall n\leq 0\}.
\end{align*}
\end{enumerate}
\end{lemma}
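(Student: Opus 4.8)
The plan is to prove the two characterizations in turn, in each case reducing the weak $\ve$--gpo condition to the contraction statements already established in Theorems \ref{Thm-non-linear-Pesin} and \ref{Thm-non-linear-Pesin-2} and in Lemma \ref{Lemma-edge}.

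For part (1), I would first show that a point $x$ with $f^n(x)\in\Psi_{\wh x_n}(R[p_n])$ for all $n\ge 0$ exists and is unique. Existence: using Lemma \ref{Lemma-edge}(2), the diffeomorphisms $g_{\wh x_{n+1}}$ map $\Psi_{\wh x_{n+1}}(R[p_{n+1}])$ into $\Psi_{\wh x_n}(R[p_n])$ along the backward direction; more precisely, in chart coordinates the maps $G_{\wh x_{n+1},\wh x_n}$ send $R[p_{n+1}]$ into $R[p_n]$ (Lemma \ref{Lemma-edge}(1)) and contract by at least $e^{-\chi/2}$ (Theorem \ref{Thm-non-linear-Pesin-2}). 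Hence for each $N$ the set $\bigcap_{n=0}^N f^{-n}[\Psi_{\wh x_n}(R[p_n])]$ is a nested sequence of nonempty compact intervals whose lengths shrink to zero, so the intersection over all $N$ is a single point $x$. Uniqueness follows because any two such points would have their forward orbits staying in the $\Psi_{\wh x_n}(R[p_n])$, and since $f$ is expanding in these charts (the composition $F_{\wh x_n}$ has derivative $>e^\chi$ at $0$, and by Theorem \ref{Thm-non-linear-Pesin} its inverse contracts on the relevant scale) the distance between them in the zeroth chart would have to grow, a contradiction with boundedness. Then the equality $V^s[\un v^+]=\vt^{-1}[x]$ is immediate from the definition of $V^s$: a point $\wh x\in\wh M$ lies in $V^s[\un v^+]$ iff $\vt_n[\wh x]=\vt_0[\wh f^n(\wh x)]\in\Psi_{\wh x_n}(R[p_n])$ for all $n\ge 0$, i.e.\ iff $f^n(\vt[\wh x])\in\Psi_{\wh x_n}(R[p_n])$ for all $n\ge 0$ (using $\vt_n[\wh x]=f^n(\vt_0[\wh x])$), which by uniqueness means $\vt[\wh x]=x$.

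For part (2), the first equality is a direct unwinding of definitions combined with the uniqueness clause of Lemma \ref{Lemma-edge}(2). If $\wh x=(x_n)_{n\in\Z}\in V^u[\un v^-]$ then $x_n\in\Psi_{\wh x_n}(R[p_n])$ for all $n\le 0$; applying Lemma \ref{Lemma-edge}(2) to the weak edge $\Psi_{\wh x_n}^{p_n}\overset{\ve}{\dashleftarrow}\Psi_{\wh x_{n+1}}^{p_{n+1}}$ together with $f(x_{n-1})=x_n$ and $x_{n-1}\in\Psi_{\wh x_{n-1}}(R[p_{n-1}])$, we conclude $x_{n-1}=g_{\wh x_n}(x_n)$ for all $n\le 0$. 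Conversely, if $x_0\in\Psi_{\wh x_0}(R[p_0])$ and $x_{n-1}=g_{\wh x_n}(x_n)$ for all $n\le 0$, then by downward induction and Lemma \ref{Lemma-edge}(1) (in the form $g_{\wh x_n}(\Psi_{\wh x_n}(R[p_n]))\subset\Psi_{\wh x_{n-1}}(R[p_{n-1}])$, which follows from $G_{\wh x_n,\wh x_{n-1}}(R[p_n])\subset R[p_{n-1}]$), every $x_n$ lies in the correct set; and $f(x_{n-1})=f(g_{\wh x_n}(x_n))=x_n$, so $\wh x\in\wh M$ and $\wh x\in V^u[\un v^-]$. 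The second equality is then just the translation of the first into chart coordinates: writing $x_n=\Psi_{\wh x_n}(t_n)$, the condition $x_{n-1}=g_{\wh x_n}(x_n)$ becomes $\Psi_{\wh x_{n-1}}(t_{n-1})=g_{\wh x_n}(\Psi_{\wh x_n}(t_n))=\Psi_{\wh x_{n-1}}(G_{\wh x_n,\wh x_{n-1}}(t_n))$, hence $t_{n-1}=G_{\wh x_n,\wh x_{n-1}}(t_n)$ by injectivity of $\Psi_{\wh x_{n-1}}$; one must also check $t_n\in R[p_n]$ is automatic once $t_0\in R[p_0]$ and the recursion holds, again by Lemma \ref{Lemma-edge}(1).

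The main obstacle I anticipate is the careful bookkeeping in part (1): one needs that the intervals $\bigcap_{n=0}^N f^{-n}[\Psi_{\wh x_n}(R[p_n])]$ are genuinely nonempty and that their diameters (measured, say, in $M$) tend to zero, which requires translating back and forth between the chart coordinates—where the clean contraction estimate $e^{-\chi/2}$ from Theorem \ref{Thm-non-linear-Pesin-2} lives—and the ambient interval $M$, where the distortion factors $u(\wh x_n)^{\pm 1}$ enter. Controlling that these distortion factors do not blow up along the orbit is exactly what the overlap conditions (WE1) and the Tempering Kernel lemma provide, but assembling this cleanly is the technical heart of the argument. Everything else is a formal unwinding of the definitions using Lemma \ref{Lemma-edge}.
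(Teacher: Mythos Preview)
Your proposal is correct and follows essentially the same route as the paper: part (2) is handled identically (Lemma \ref{Lemma-edge} plus induction), and part (1) uses the contraction of the chart maps $G_{\wh x_{n+1},\wh x_n}$ from Theorem \ref{Thm-non-linear-Pesin-2} to get a unique shadowed point.

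The one place you diverge is your anticipated ``main obstacle'', and here you are making the problem harder than it is. The paper avoids translating back and forth between $M$ and the various charts by working entirely in the \emph{zeroth} chart: the nested intervals are
\[
I_N:=\bigl(G_{\wh x_1,\wh x_0}\circ\cdots\circ G_{\wh x_N,\wh x_{N-1}}\bigr)(R[p_N])\subset R[p_0],
\]
which are compact, nonempty (Lemma \ref{Lemma-edge}(1) gives $I_N\subset I_{N-1}$), and have length at most $2p_N e^{-N\chi/2}\le 2p_0\,e^{-N(\chi/2-\ve)}\to 0$ by (WE2). The intersection is a single $t_0$, and $x:=\Psi_{\wh x_0}(t_0)$. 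Only the fixed factor $u(\wh x_0)^{-1}$ from $\Psi_{\wh x_0}$ ever appears; the distortions $u(\wh x_n)$ for $n\ge 1$ play no role, and neither the Tempering Kernel lemma nor any subexponential control along the orbit is needed. So your bookkeeping worry dissolves once you commit to the zeroth chart rather than tracking diameters in $M$.
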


In other words, a stable set is the set of all possible pasts of a single $x\in M$,
and an unstable set is isomorphic to the interval $\Psi_{\wh x_0}(R[p_0])$, i.e. 
an element of an unstable set is uniquely determined by its zeroth coordinate.

\begin{proof} Let $\un v^+=\{\Psi_{\wh x_n}^{p_n}\}_{n\geq 0}$ be a positive weak $\ve$--gpo.
Firstly we prove that there exists a unique $x\in M$ s.t.
$f^n(x)\in \Psi_{\wh x_n}(R[p_n])$, $\forall n\geq 0$. Any such $x$ is defined by a sequence 
$(t_n)_{n\geq 0}$ of values $t_n\in R[p_n]$ s.t. $f^n(x)=\Psi_{\wh x_n}(t_n)$ and
$t_n=G_{\wh x_{n+1},\wh x_n}(t_{n+1})$ for all $n\geq 0$.
By Theorem \ref{Thm-non-linear-Pesin-2},
each $G_{\wh x_{n+1},\wh x_n}$ contracts at least by a factor $e^{-\chi/2}$,
hence $t_0$ is the intersection of the descending chain of compact intervals
$I_n:=(G_{\wh x_1,\wh x_0}\circ \cdots\circ G_{\wh x_n,\wh x_{n-1}})(R[p_n])$,
$n\geq 0$. By a similar reasoning, $t_n$ is uniquely defined for all $n\geq 0$.
By this uniqueness, $t_n=G_{\wh x_{n+1},\wh x_n}(t_{n+1})$ for all $n\geq 0$.

\medskip
\noindent
(1) Let $x\in M$ s.t. $f^n(x)\in \Psi_{\wh x_n}(R[p_n])$, $\forall n\geq 0$.
Take $\wh x\in V^s[\un v^+]$, and let $x_0=\vartheta[\wh x]$. Since
$f^n(x_0)=\vt_n[\wh x]\in \Psi_{\wh x_n}(R[p_n])$ for all $n\geq 0$,
we have $x_0=x$, thus $V^s[\un v^+]\subset \vt^{-1}[x]$. Conversely,
$\wh y\in\vt^{-1}[x]\Rightarrow \vt_n[\wh y]=f^n(x)\in \Psi_{\wh x_n}(R[p_n])$ for all $n\geq 0$,
hence $V^s[\un v^+]\supset\vt^{-1}[x]$.

%

\medskip
\noindent
(2) Fix a negative weak $\ve$--gpo $\un v^-=\{\Psi_{\wh x_n}^{p_n}\}_{n\leq 0}$.
It is easy to see that the two alternative characterizations of $V^u[\un v^-]$
are equivalent: $x_0\in\Psi_{\wh x_0}(R[p_0])$ iff $x_0=\Psi_{\wh x_0}(t_0)$
for some $t_0\in R[p_0]$; $x_{n-1}=g_{\wh x_n}(x_n)$ iff $x_{n-1}=\Psi_{\wh x_{n-1}}(t_{n-1})$
and $x_n=\Psi_{\wh x_n}(t_n)$ with $t_{n-1}=G_{\wh x_n,\wh x_{n-1}}(t_n)$.
Hence it is enough to show the second characterization.
Take $\wh x=(x_n)_{n\in\Z}\in V^u[\un v^-]$. Fix $n\leq 0$. By assumption,
$x_{n-1}\in \Psi_{\wh x_{n-1}}(R[p_{n-1}])$, $x_n\in \Psi_{\wh x_n}(R[p_n])$ and
$f(x_{n-1})=x_n$. By Lemma \ref{Lemma-edge}(2), it follows that $x_{n-1}=g_{\wh x_n}(x_n)$.

\medskip
Conversely, take $\wh x=(x_n)_{n\in\Z}\in\wh M$ s.t. $x_0\in \Psi_{\wh x_0}(R[p_0])$
and $x_{n-1}=g_{\wh x_n}(x_n)$ for all $n\leq 0$.
By Lemma \ref{Lemma-edge}(1), we have
$g_{\wh x_n}(\Psi_{\wh x_n}(R[p_n]))\subset \Psi_{\wh x_{n-1}}(R[p_{n-1}])$ for all $n\in\Z$.
Applying this for $n=0$, we get that $x_{-1}=g_{\wh x_0}(x_0)\in \Psi_{\wh x_{-1}}(R[p_{-1}])$.
By induction, it follows that $x_n\in \Psi_{\wh x_n}(R[p_n])$ for all $n\leq 0$.
\end{proof}

Here are the main properties of stable and unstable sets.

\begin{proposition}\label{Prop-stable-manifolds}
The following holds for all $\ve>0$ small enough.
\begin{enumerate}[{\rm (1)}]
\item {\sc Product structure:} If $V^s/V^u$ is a stable$/$unstable set at $v$
then $V^s\cap V^u$ consists of  a single element of $\wh M$.
\item {\sc Invariance:}
$$
\wh f(V^s[\{v_n\}_{n\geq 0}])\subset V^s[\{v_n\}_{n\geq 1}]\text{ and }
\wh f^{-1}(V^u[\{v_n\}_{n\leq 0}])\subset V^u[\{v_n\}_{n\leq -1}].
$$
\item {\sc Hyperbolicity:} If $\wh y,\wh z\in V^s[{\un v}^+]$ then
$d(\wh f^n(\wh y),\wh f^n(\wh z))=2^{-n}d(\wh y,\wh z)$ for all $n\geq 0$.
If $\wh y,\wh z\in V^u[\{\Psi_{\wh x_n}^{p_n}\}_{n\leq 0}]$
then for all $n\leq 0$:
\begin{enumerate}[{\rm (a)}]
\item  $d(\wh f^n(\wh y),\wh f^n(\wh z))\leq 2p_0e^{\frac{\chi}{2}n}d(\wh y,\wh z)$.
\item $|\log\|\wh{df}^{(n)}_{\wh y}\|-\log\|\wh{df}^{(n)}_{\wh z}\||<Q_\ve(\wh x_0)^{\beta/4}$.
In particular, $\tfrac{u(\wh y)}{u(\wh z)}=e^{\pm Q_\ve(\wh x_0)^{\beta/4}}$.
\end{enumerate}
\item {\sc Disjointness:} Let $v=\Psi_{\wh x}^p$ and $w=\Psi_{\wh y}^q$ with $\wh x=\wh y$.
If $V^s,W^s$ are stable sets at $v,w$ then they are either disjoint or coincide. 
If $V^u,W^u$ are unstable sets at $v,w$ then they are either disjoint or one contains the other.
\item Let $v\overset{\ve}{\dashleftarrow}w$. If $V^u$ is an unstable set at
$v$ then $\wh f(V^u)$ intersects every stable set at $w$ at a single element.
\end{enumerate}
\end{proposition}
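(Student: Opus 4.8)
The plan is to reduce the statement to a combination of parts (1), (2) and (4) of the same proposition, plus the characterizations in Lemma \ref{Lemma-stable-sets}. Fix a weak edge $v\overset{\ve}{\dashleftarrow}w$ with $v=\Psi_{\wh y}^q$ and $w=\Psi_{\wh x}^p$, let $V^u=V^u[\un v^-]$ be an unstable set at $v$ (so $\un v^-$ is a negative weak $\ve$--gpo with $v_0=v$), and let $W^s=V^s[\un w^+]$ be an arbitrary stable set at $w$ (so $\un w^+$ is a positive weak $\ve$--gpo with $w_0=w$). The first step is to build a single two--sided weak $\ve$--gpo $\un u=\{u_n\}_{n\in\Z}$ by concatenating: set $u_n:=w_n$ for $n\geq 0$ and $u_n:=v_{n+1}$ for $n\leq -1$. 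The edge condition $u_{-1}\overset{\ve}{\dashleftarrow}u_0$ is exactly the hypothesis $v\overset{\ve}{\dashleftarrow}w$ (i.e. $v_0\overset{\ve}{\dashleftarrow}w_0$), and all other edges come from $\un v^-$ and $\un w^+$ being weak $\ve$--gpo's; so $\un u$ is a genuine weak $\ve$--gpo.

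Next I would apply Proposition \ref{Prop-stable-manifolds}(1) (product structure) to the stable set $V^s[\un u]=V^s[\{u_n\}_{n\geq 0}]=W^s$ and the unstable set $V^u[\un u]=V^u[\{u_n\}_{n\leq 0}]$ at the chart $u_0=w$: their intersection $V^s[\un u]\cap V^u[\un u]$ is a single point $\wh z\in\wh M$. It then remains to identify $V^u[\un u]$ with $\wh f(V^u)$. For this I use Lemma \ref{Lemma-stable-sets}(2): since $\{u_n\}_{n\leq -1}=\{v_{n+1}\}_{n\leq -1}=\{v_n\}_{n\leq 0}=\un v^-$, the negative weak $\ve$--gpo defining $V^u[\un u]$ is the \emph{shift} of $\un v^-$ by one, so Proposition \ref{Prop-stable-manifolds}(2) (invariance) gives $\wh f(V^u[\un v^-])\subset V^u[\{v_n\}_{n\leq -1}]=V^u[\un u]$. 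Conversely, by Lemma \ref{Lemma-stable-sets}(2) an element of $V^u[\un u]$ is determined by its zeroth coordinate $x_0\in\Psi_{\wh y}(R[q])$ together with the backward relations $x_{n-1}=g_{u_n}(x_n)$, $n\leq 0$; applying $\wh f^{-1}$ and using $\vt_1\circ\wh f^{-1}=\vt_0$ one checks the preimage lies in $V^u[\un v^-]$, so in fact $\wh f(V^u)=V^u[\un u]$. Hence $\wh f(V^u)\cap W^s=\{\wh z\}$, which is the claim.

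The main obstacle is the bookkeeping in this last identification $\wh f(V^u)=V^u[\un u]$: one must be careful that the concatenated gpo has its indices aligned so that the ``present'' chart of $V^u[\un u]$ is $v_0=w$ and not $v_0$ viewed one step in the past, and that the edge $v\overset{\ve}{\dashleftarrow}w$ is precisely what licenses the step from the $n=0$ chart back to the $n=-1$ chart. Everything else is a direct appeal to parts (1) and (2) and to Lemma \ref{Lemma-stable-sets}; in particular no new estimate on $Q_\ve$, $u$ or $\rho$ is needed, only the already-established contraction of the maps $G_{\wh x_n,\wh x_{n-1}}$ that underlies the product structure in part (1).
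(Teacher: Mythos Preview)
Your concatenation idea is sound and the identification $V^s[\un u]=W^s$ is correct, but the step ``$\wh f(V^u)=V^u[\un u]$'' is where the argument fails. First, you invoke invariance in the wrong direction: part (2) says $\wh f^{-1}(V^u[\{v_n\}_{n\le 0}])\subset V^u[\{v_n\}_{n\le -1}]$, not $\wh f(V^u[\un v^-])\subset V^u[\{v_n\}_{n\le -1}]$. Second, and more seriously, the equality you claim is false in general: $V^u[\un u]$ requires the zeroth coordinate to lie in $\Psi_{\wh x}(R[p])$, whereas an element of $\wh f(V^u)$ only has its $(-1)$--th coordinate constrained to $\Psi_{\wh y}(R[q])$, and since $f$ is expanding the image $f(\Psi_{\wh y}(R[q]))$ is typically much larger than $\Psi_{\wh x}(R[p])$. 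So only the inclusion $V^u[\un u]\subset \wh f(V^u)$ holds, which gives existence but not uniqueness of the intersection point.

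Your approach can be repaired: uniqueness follows once you observe that any $\wh z\in \wh f(V^u)\cap W^s$ automatically has $\vt_0[\wh z]=x\in\Psi_{\wh x}(R[p])$ (imposed by $W^s$), hence $\wh z\in V^u[\un u]$; in other words $\wh f(V^u)\cap W^s\subset V^u[\un u]\cap W^s$, and then part (1) finishes it. The paper takes a more direct route that avoids the concatenation altogether: it writes out $\wh f(V^u)$ explicitly from Lemma \ref{Lemma-stable-sets}(2), observes that any point in $\wh f(V^u)\cap W^s$ is determined by the choice of $x_{-1}\in\Psi_{\wh y}(R[q])$ with $f(x_{-1})=x$, and then quotes Lemma \ref{Lemma-edge}(2) to conclude that this preimage is unique. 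This bypasses parts (1), (2) and (4) entirely and is where the weak edge hypothesis $v\overset{\ve}{\dashleftarrow}w$ is actually used.
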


\begin{proof}
(1) Write $v=\Psi_{\wh x_0}^{p_0}$, $V^s=V^s[\{\Psi_{\wh x_n}^{p_n}\}_{n\geq 0}]$,
$V^u=V^u[\{\Psi_{\wh x_n}^{p_n}\}_{n\leq 0}]$. By Lemma \ref{Lemma-stable-sets}(1),
$\exists x\in M$ s.t. $V^s=\vt^{-1}[x]$. Any element $\wh x=(x_n)_{n\in\Z}$ in 
$V^s\cap V^u$ satisfies $x_n=f^n(x)$ for all $n\geq 0$, and $x_{n-1}=g_{\wh x_n}(x_n)$ for all $n\leq 0$.
These conditions uniquely characterize $\wh x$, hence $V^s\cap V^u$ is a singleton.

\medskip
\noindent
(2) If $\un v^+=\{\Psi_{\wh x_n}^{p_n}\}_{n\geq 0}$ is a positive weak $\ve$--gpo then
$\wh x\in V^s[\un v^+]\Rightarrow \vt_n[\wh x]\in\Psi_{\wh x_n}(R[p_n])$, $\forall n\geq 0\Rightarrow
\vt_n[\wh f(\wh x)]\in \Psi_{\wh x_{n+1}}(R[p_{n+1}]),\forall n\geq 0\Rightarrow \wh f(\wh x)\in V^s[\{v_n\}_{n\geq 1}]$.
By a similar reason, if $\un v^-=\{\Psi_{\wh x_n}^{p_n}\}_{n\leq 0}$ is a negative weak $\ve$--gpo then
$\wh x\in V^u[\un v^-]\Rightarrow \vt_n[\wh x]\in\Psi_{\wh x_n}(R[p_n])$, $\forall n\leq 0\Rightarrow
\vt_n[\wh f^{-1}(\wh x)]\in \Psi_{\wh x_{n-1}}(R[p_{n-1}]),\forall n\leq 0\Rightarrow
\wh f^{-1}(\wh x)\in V^u[\{v_n\}_{n\leq -1}]$.
%

\medskip
\noindent
(3) Write $V^s[\un v^+]=\vt^{-1}(x)$, and let $\wh y=(y_n)_{n\in\Z},\wh z=(z_n)_{n\in\Z}$ be in $V^s[\un v^+]$.
For $n\geq 0$ we have $y_n=z_n=f^n(x)$, thus
$d(\wh f^n(\wh y),\wh f^n(\wh z))=\sup\{2^{-k}d(y_{n-k},z_{n-k}):k\geq 0\}=\sup\{2^{-k}d(y_{n-k},z_{n-k}):k\geq n\}
=2^{-n}d(\wh y,\wh z)$. 

\medskip
Now let $\un v^-=\{\Psi_{\wh x_n}^{p_n}\}_{n\leq 0}$ be a negative weak $\ve$--gpo, and
take $\wh y=(y_n)_{n\in\Z},\wh z=(z_n)_{n\in\Z}\in V^u[\un v^-]$.
By Lemma \ref{Lemma-stable-sets}(2), for all $n\leq 0$ we can write
$y_n=\Psi_{\wh x_n}(t_n)$, $z_n=\Psi_{\wh x_n}(t_n')$, where $t_0,t_0'\in R[p_0]$
and $t_{n-1}=G_{\wh x_n,\wh x_{n-1}}(t_n)$, $t_{n-1}'=G_{\wh x_n,\wh x_{n-1}}(t_n')$.
Define $\Delta_n:=t_n-t_n'$ for $n\leq 0$. By Theorem \ref{Thm-non-linear-Pesin-2},
$|\Delta_{n-1}|\leq e^{-\frac{\chi}{2}}|\Delta_n|$ for all $n\leq 0$, therefore 
$|\Delta_n|\leq e^{\frac{\chi}{2}n}|\Delta_0|\leq 2p_0e^{\frac{\chi}{2}n}$ for all $n\leq 0$,
and so $d(y_n,z_n)\leq 2p_0e^{\frac{\chi}{2}n}$ (since $\Psi_{\wh x_n}$ is 1--Lipschitz).
We conclude that $d(\wh f^n(\wh y),\wh f^n(\wh z))\leq 2p_0e^{\frac{\chi}{2}n}d(\wh y,\wh z)$ for all $n\leq 0$.
To prove (b), we proceed exactly as in the proof of Proposition 6.2(1)(c) of \cite{Lima-Matheus}.

\medskip
\noindent
(4) Let $V^s,W^s$ be stable sets in $v,w$ respectively. By Lemma \ref{Lemma-stable-sets}(1),
$\exists y,z\in M$ s.t. $V^s=\vt^{-1}[y]$ and $W^s=\vt^{-1}[z]$, hence either
$V^s\cap W^s=\emptyset$ or $V^s=W^s$.

\medskip
Now let $V^u=V^u[\{\Psi_{\wh x_n}^{p_n}\}_{n\leq 0}]$
and $W^u=V^u[\{\Psi_{\wh y_n}^{q_n}\}_{n\leq 0}]$, with $v=\Psi_{\wh x}^{p}$,
$w=\Psi_{\wh x}^{q}$. If $V^u\cap W^u=\emptyset$ then there is nothing to prove, so 
assume that $V^u\cap W^u\neq\emptyset$. Assuming that $p\leq q$, we will prove
that $V^u\subset W^u$ (the other case is identical). In the sequel, ``$n$ small enough''
means that $n\leq 0$ and $|n|$ is large enough. The following claims hold.
\begin{enumerate}[$\circ$]
\item If $n$ is small enough then $\vt_n[V^u]\subset \Psi_{\wh x_n}(R[\tfrac{1}{2}p_n])$:
take $\wh x=(x_n)_{n\in\Z}\in V^u$. For $n\leq 0$ write $x_n=\Psi_{\wh x_n}(t_n)$ with
$t_0\in R[p_0]$ and $t_{n-1}=G_{\wh x_n,\wh x_{n-1}}(t_n)$, where 
$G_{\wh x_n,\wh x_{n-1}}(t)=A_nt+h_n(t)$. It is enough to show that $|t_n|<\tfrac{1}{2}p_n$
for $n$ small enough. Start noting that $\|dh_n\|_0<2\ve^2$, where the
norm is taken in $R[\tfrac{1}{2}p_n]$. This is a direct consequence of Theorem \ref{Thm-non-linear-Pesin-2}:
$\|dh_n\|_0\leq |d(h_n)_0|+\ve p_n^{\beta/3}<\ve p_n^6+\ve^2<2\ve^2$.
Theorem \ref{Thm-non-linear-Pesin-2} also says that
$|A_n|<e^{-\chi}$ and $|h_n(0)|< \ve p_n^6$, therefore if $\ve>0$ is small enough then the following
holds for all $n\leq 0$:
\begin{align*}
|t_{n-1}|\leq |A_n||t_n|+|h_n(t_n)|< e^{-\chi}|t_n|+\ve p_n^6+2\ve^2 |t_n|<e^{-\frac{\chi}{2}}|t_n|+\ve p_n.
\end{align*}
By (WE2), $p_k\leq e^{\ve(k-\ell)}p_\ell$ whenever $\ell\leq k$, hence for all $n\leq 0$ we have
\begin{align*}
&\ |t_n|\leq e^{\frac{\chi}{2}n}|t_0|+\ve(p_{n+1}+e^{-\frac{\chi}{2}}p_{n+2}+\cdots+e^{\frac{\chi}{2}(n+1)}p_0)\\
&\leq e^{\frac{\chi}{2}n}p_0+\ve e^{\ve}(p_n+e^{-\frac{\chi}{2}}p_{n+1}+\cdots+e^{\frac{\chi}{2}(n+1)}p_{-1})\\
&\leq \left[e^{(\frac{\chi}{2}-\ve)n}+\ve e^{\ve}\sum_{i=n+1}^0 e^{(\frac{\chi}{2}-\ve)i}\right]p_n<\tfrac{1}{2}p_n,\\
\end{align*}
since $e^{(\frac{\chi}{2}-\ve)n}<\tfrac{1}{4}$ for $n$ small enough and
$\ve e^{\ve}\sum_{i=n+1}^0 e^{(\frac{\chi}{2}-\ve)i}<\tfrac{2\ve}{1-e^{-\chi/4}}<\tfrac{1}{4}$.
\item If $n$ is small  enough then $\vt_n[W^u]\subset \Psi_{\wh x_n}(R[p_n])$: 
fix $\wh y=(y_n)_{n\in\Z}\in V^u\cap W^u$, and take $\wh z=(z_n)_{n\in\Z}\in W^u$.
In part (3) we proved that $d(y_n,z_n)\leq 2q_0e^{\frac{\chi}{2}n}$ for all $n\leq 0$,
thus
$$
|\Psi_{\wh x_n}^{-1}(y_n)-\Psi_{\wh x_n}^{-1}(z_n)|\leq 2q_0u(\wh x_n) e^{\frac{\chi}{2}n}\leq
2q_0p_n^{-1} e^{\frac{\chi}{2}n}\leq 2q_0p_0^{-1}e^{(\frac{\chi}{2}-\ve)n},
$$
since $u(\wh x_n)\leq Q_\ve(\wh x_n)^{-1}\leq p_n^{-1}$ and 
$p_0\leq e^{-\ve n}p_n$. If $n$ is small enough then 
$\Psi_{\wh x_n}^{-1}(y_n)\in R[\tfrac{1}{2}p_n]$ and
$2q_0p_0^{-1}e^{(\frac{\chi}{2}-\ve)n}<\tfrac{1}{2}p_0 e^{\ve n}\leq \tfrac{1}{2}p_n$,
hence $\Psi_{\wh x_n}^{-1}(z_n)\in R[p_n]$.
\end{enumerate}
Fix $n\leq 0$ s.t. both items above hold for all $N\leq n$. By the definition of unstable sets,
we have $\wh f^n(V^u),\wh f^n(W^u)\subset V^u[\{\Psi_{\wh x_k}^{p_k}\}_{k\leq n}]$. 
By Lemma \ref{Lemma-stable-sets}(2),
and since the inverse branches $g_{\wh x_k}$ send intervals onto intervals,
$\exists\alpha,\beta,\alpha',\beta'\in\R$ s.t.:
\begin{align*}
\wh f^n(V^u)&=\left\{(x_k)_{k\in\Z}\in\wh M:
\begin{array}{l}
\text{for }k\leq 0\text{ we can write }x_k=\Psi_{\wh x_{n+k}}(t_k)\text{ with}\\
t_0\in[\alpha,\beta]\text{ and }t_{k-1}=G_{\wh x_{n+k},\wh x_{n+k-1}}(t_k)
\end{array}\right\}\\
&\\
\wh f^n(W^u)&=\left\{(x_k)_{k\in\Z}\in\wh M:
\begin{array}{l}
\text{for }k\leq 0\text{ we can write }x_k=\Psi_{\wh x_{n+k}}(t_k')\text{ with}\\
t_0'\in[\alpha',\beta']\text{ and }t_{k-1}'=G_{\wh x_{n+k},\wh x_{n+k-1}}(t_k')
\end{array}\right\}.
\end{align*}
To prove that $V^u\subset W^u$ it is enough to show that
$[\alpha,\beta]\subset[\alpha',\beta']$: if this happens then
$\wh f^n(V^u)\subset \wh f^n(W^u)$ and thus $V^u\subset W^u$. By contradiction,
assume that $[\alpha,\beta]\not\subset[\alpha',\beta']$, then either $\alpha<\alpha'$ and/or
$\beta'<\beta$. By symmetry, we may assume $\alpha<\alpha'$. Thus $A'=\Psi_{\wh x_n}(\alpha')$
belongs to the interior of the interval with endpoints $A=\Psi_{\wh x_n}(\alpha)$ and
$B=\Psi_{\wh x_n}(\beta)$. Since $f$ is continuous inside the ranges of $\ve$--charts,
$f^{-n}(A')$ belongs to the interior of the interval with endpoints
$f^{-n}(A)$ and $f^{-n}(B)$. This latter interval is $\Psi_{\wh x}(R[p])$. But
$f^{-n}(A')$ is one of the endpoints of $\Psi_{\wh x}(R[q])$, therefore 
$f^{-n}(A')\in \Psi_{\wh x}(R[p])$ iff $q<p$, which contradicts our assumption.
The proof is complete.

\medskip
\noindent
(5) Write $v=\Psi_{\wh x_0}^{p_0}$, $w=\Psi_{\wh x_1}^{p_1}$ and
$V^u=V^u[\un v^-]$ where $\un v^-=\{\Psi_{\wh x_n}^{p_n}\}_{n\leq 0}$ is a
negative weak $\ve$--gpo. Let $V^s=\vt^{-1}[x]$ be a stable set at $w$. We want to show
that $\wh f(V^u)\cap V^s$ consists of a single element. By Lemma \ref{Lemma-stable-sets}(2),
$$
V^u=\{\wh x=(x_n)_{n\in\Z}\in\wh M:x_0\in \Psi_{\wh x_0}(R[p_0])\text{ and }
x_{n-1}=g_{\wh x_n}(x_n),\forall n\leq 0 \}
$$
hence 
$$
\wh f(V^u)=\{\wh x=(x_n)_{n\in\Z}\in\wh M:x_{-1}\in\Psi_{\wh x_0}(R[p_0])\text{ and }
x_{n-1}=g_{\wh x_n}(x_n),\forall n\leq -1\}.
$$
Any $\wh x=(x_n)_{n\in\Z}\in \wh f(V^u)\cap V^s$ must satisfy
$x_n=f^n(x)$ for $n\geq 0$ and $x_{n-1}=g_{\wh x_n}(x_n)$ for $n\leq -1$,
therefore $\wh x$ is uniquely defined by the choice of $x_{-1}$. By Lemma
\ref{Lemma-edge}(2), there is a unique $x_{-1}\in\Psi_{\wh x_0}(R[p_0])$
s.t. $f(x_{-1})=x$.
\end{proof}

\medskip
\noindent
{\sc Shadowing:} A weak $\ve$--gpo $\{\Psi_{\wh x_n}^{p_n}\}_{n\in\Z}$ is said to {\em shadow}
a point $\wh x\in\wh M$ if $\vt_n[\wh x]\in \Psi_{\wh x_n}(R[p_n])$ for all $n\in\Z$.

\begin{lemma}\label{Lemma-shadowing}
Every weak $\ve$--gpo shadows a unique element of $\wh M$.
\end{lemma}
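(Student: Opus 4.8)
The plan is to recognize the statement as an immediate consequence of the product structure of stable and unstable sets. Let $\un v=\{v_n\}_{n\in\Z}=\{\Psi_{\wh x_n}^{p_n}\}_{n\in\Z}$ be a weak $\ve$--gpo. By the definition of shadowing, a point $\wh x\in\wh M$ is shadowed by $\un v$ if and only if $\vt_n[\wh x]\in\Psi_{\wh x_n}(R[p_n])$ for all $n\in\Z$. Splitting this condition according to the sign of $n$, and recalling that $V^s[\un v]=V^s[\{v_n\}_{n\geq 0}]=\{\wh x:\vt_n[\wh x]\in\Psi_{\wh x_n}(R[p_n]),\ \forall n\geq 0\}$ and $V^u[\un v]=V^u[\{v_n\}_{n\leq 0}]=\{\wh x:\vt_n[\wh x]\in\Psi_{\wh x_n}(R[p_n]),\ \forall n\leq 0\}$, we see that the set of points shadowed by $\un v$ is exactly $V^s[\un v]\cap V^u[\un v]$ (the two descriptions agree at the overlapping index $n=0$, so there is no inconsistency).

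Now $V^s[\un v]$ is by definition a stable set at the $\ve$--chart $v_0$, and $V^u[\un v]$ is an unstable set at the same $\ve$--chart $v_0$. Hence, for $\ve>0$ small enough, Proposition \ref{Prop-stable-manifolds}(1) applies and tells us that $V^s[\un v]\cap V^u[\un v]$ consists of a single element of $\wh M$. That element is therefore the unique point of $\wh M$ shadowed by $\un v$, which is precisely the assertion of the lemma.

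If one prefers a slightly more hands-on argument, the same conclusion can be reached by unfolding Lemma \ref{Lemma-stable-sets}: part (1) produces a unique $x\in M$ with $f^n(x)\in\Psi_{\wh x_n}(R[p_n])$ for all $n\geq 0$ and identifies $V^s[\un v]=\vt^{-1}[x]$; part (2) then shows that among the elements $(x_n)_{n\in\Z}$ of $\vt^{-1}[x]$ (which all have $x_n=f^n(x)$ for $n\geq 0$), requiring $x_0\in\Psi_{\wh x_0}(R[p_0])$ and $x_{n-1}=g_{\wh x_n}(x_n)$ for all $n\leq 0$ pins down the negative coordinates uniquely via the contracting maps $G_{\wh x_n,\wh x_{n-1}}$. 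Either way there is no genuine obstacle here: the real work — the descending-chain/contraction estimate for the unstable direction, the ``all pasts of a single point'' description of the stable direction, and their transversal intersection — has already been carried out in Lemma \ref{Lemma-stable-sets} and Proposition \ref{Prop-stable-manifolds}, so this lemma is essentially a repackaging of those results.
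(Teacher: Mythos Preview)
Your proof is correct and follows exactly the same route as the paper: identify the set of shadowed points as $V^s[\un v]\cap V^u[\un v]$ and invoke Proposition~\ref{Prop-stable-manifolds}(1). The additional unfolding via Lemma~\ref{Lemma-stable-sets} is accurate but not needed.
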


\begin{proof}
Let $\un v=\{v_n\}_{n\in\Z}$ be a weak $\ve$--gpo, and let $V^s=V^s[\{v_n\}_{n\geq 0}]$,
$V^u=V^u[\{v_n\}_{n\leq 0}]$. By the definition of $V^s$ and $V^u$, 
any point $\wh x\in\wh M$ shadowed by $\un v$ belongs to $V^s\cap V^u$.
By Proposition \ref{Prop-stable-manifolds}(1), this intersection consists of a single element of $\wh M$.
\end{proof}

\section{Coarse graining}\label{Section-coarse-graining}

In this section, we construct a countable set of $\ve$--charts whose set of (strong) $\ve$--gpo's
shadows all relevant orbits of $\wh f$.

\begin{theorem}\label{Thm-coarse-graining}
For all $\ve>0$ sufficiently small, there exists a countable family $\mathfs A$ of $\ve$--charts
with the following properties:
\begin{enumerate}[{\rm (1)}]
\item {\sc Discreteness}: For all $t>0$, the set $\{\Psi_{\wh x}^p\in\mathfs A:p>t\}$ is finite.
\item {\sc Sufficiency:} If $\wh x\in {\rm NUE}_\chi^*$ then there is an $\ve$--gpo
$\un v\in{\mathfs A}^{\Z}$ that shadows $\wh x$.
\item {\sc Relevance:} For all $v\in \mathfs A$ there is an $\ve$--gpo $\un{v}\in\mathfs A^\Z$
with $v_0=v$ that shadows a point in ${\rm NUE}_\chi^*$.
\end{enumerate}
\end{theorem}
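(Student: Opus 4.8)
The plan is to run the coarse-graining argument of \cite[\S3]{Sarig-JAMS} (compare \cite{Lima-Sarig,Lima-Matheus}), considerably simplified here since every Pesin chart is affine and there is no stable direction. The starting point is that an $\ve$--chart $\Psi_{\wh x}^p$ is, as far as the overlap relation $\overset{\ve}{\approx}$, the edge conditions (E1)--(E2.3), and shadowing are concerned, completely determined by the finite tuple
\[
\Delta(\wh x,p):=\bigl(\vt_{-1}[\wh x],\vt_0[\wh x],\vt_1[\wh x],\,u(\wh f^{-1}(\wh x)),u(\wh x),u(\wh f(\wh x)),\,Q_\ve(\wh x),\,p\bigr)\in M^3\times(1,\infty)^3\times I_\ve\times I_\ve.
\]
The first step is to note that, once $p>t$, all coordinates of $\Delta(\wh x,p)$ range over sets bounded in terms of $t$ and $\ve$: $M$ is compact; the trivial bounds after the definition of $Q_\ve$ give $u(\wh x)\le\ve^{1/8}Q_\ve(\wh x)^{-\beta/24}\le\ve^{1/8}p^{-\beta/24}$, $u(\wh f^{-1}(\wh x))\le\ve^{1/4}p^{-\beta/12}$, and $Q_\ve(\wh x)\in I_\ve\cap(p,\ve^{3/\beta}]$, a finite set; finally $\widetilde Q_\ve(\wh x)\le\ve^{3/\beta}u(\wh f^{-1}(\wh x))^{-12/\beta}\rho(\wh x)^{72a/\beta}$ together with $u(\wh f^{-1}(\wh x))>1$ and $p\le Q_\ve(\wh x)$ yields $\rho(\wh x)\ge p^{\beta/(72a)}$, whence $d(\vt_0[\wh x],\mathfs S)\ge p^{\beta/(72a)}$, and then (A2) and the identity $u(\wh f(\wh x))^2=1+e^{2\chi}|df_{\vt_0[\wh x]}|^{-2}u(\wh x)^2$ from Lemma \ref{Lemma-linear-reduction} bound $u(\wh f(\wh x))$ in terms of $t$ and $\ve$.

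Next I would build the family. For each $p\in I_\ve$ fix a finite grid on the (bounded) range of $(\vt_{-1},\vt_0,\vt_1,u(\wh f^{-1}\cdot),u(\cdot),u(\wh f\cdot),Q_\ve)$ with mesh $\le p^{N}$, where $N$ is a fixed large exponent chosen once so that the mesh is dominated by the slack built into $\overset{\ve}{\approx}$ and (E1)--(E2.3) at scale $\asymp p$; leave the discrete coordinates $Q_\ve,p\in I_\ve$ unrounded. The union over $p\in I_\ve$ of these grids (times $\{p\}$) is a countable set of \emph{admissible tuples}, and we let $\mathfrak d(\Psi_{\wh x}^p)$ be the admissible tuple nearest to $\Delta(\wh x,p)$. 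Call an admissible tuple $\xi$ \emph{realizable} if $\xi=\mathfrak d(\Psi_{\wh y}^{q_\ve(\wh y)})$ for some $\wh y\in{\rm NUE}_\chi^*$; for each realizable $\xi$ pick one such $\wh y=\wh x_\xi$ and put $v_\xi:=\Psi_{\wh x_\xi}^{q_\ve(\wh x_\xi)}$ into $\mathfs A$. By Lemma \ref{Lemma-q}(1) each $v_\xi$ is a genuine $\ve$--chart and $\mathfs A$ is countable; since $p$ and $Q_\ve$ are not rounded, the $p$-- and $Q_\ve$--coordinates of $v_\xi$ equal those of $\xi$. Discreteness is then immediate: if $p=q_\ve(\wh x_\xi)>t$ there are, by the first step, only finitely many admissible tuples, hence finitely many such $\xi$.

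For Sufficiency, given $\wh x\in{\rm NUE}_\chi^*$, Lemma \ref{Lemma-q}(2) shows the \emph{canonical} sequence $\{\Psi_{\wh f^n(\wh x)}^{q_\ve(\wh f^n(\wh x))}\}_{n\in\Z}$ is an $\ve$--gpo --- in (E1)--(E2.3) each condition is then a triviality (a chart overlapping itself, a distance $0$, a ratio $1$) or precisely Lemma \ref{Lemma-q}(2) --- and it obviously shadows $\wh x$. Put $v_n:=v_{\xi_n}\in\mathfs A$ with $\xi_n:=\mathfrak d(\Psi_{\wh f^n(\wh x)}^{q_\ve(\wh f^n(\wh x))})$; one checks $v_n\overset{\ve}{\leftarrow}v_{n+1}$ by perturbing the corresponding canonical edge, the continuous coordinates being within mesh $\le q_\ve(\wh f^n(\wh x))^N$ of the canonical ones while $p$ and $Q_\ve$ are unchanged, so (E2.3) holds verbatim and (E1), (E2.1), (E2.2) survive because $N$ is large. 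Shadowing of $\wh x$ by $\{v_n\}$ holds since $\vt_n[\wh x]$ is the centre of $\Psi_{\wh f^n(\wh x)}^{q_\ve(\wh f^n(\wh x))}$, so $\Psi_{\wh x_{\xi_n}}^{-1}(\vt_n[\wh x])$ has absolute value $\le u(\wh x_{\xi_n})\cdot 2q_\ve(\wh f^n(\wh x))^N\ll q_\ve(\wh f^n(\wh x))$. For Relevance, take $v=v_\xi=\Psi_{\wh x_\xi}^{q_\ve(\wh x_\xi)}\in\mathfs A$; since $\mathfrak d(\Psi_{\wh x_\xi}^{q_\ve(\wh x_\xi)})=\xi$ by the choice of $\wh x_\xi$, the $\ve$--gpo produced by the Sufficiency construction applied to $\wh x_\xi\in{\rm NUE}_\chi^*$ has zeroth chart $v_{\mathfrak d(\Psi_{\wh x_\xi}^{q_\ve(\wh x_\xi)})}=v_\xi=v$ and shadows $\wh x_\xi\in{\rm NUE}_\chi^*$.

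I expect the main obstacle to be the robustness bookkeeping in the Sufficiency step: verifying that (E1), (E2.1), (E2.2) and the overlap condition all remain valid after replacing each canonical chart by its grid representative. The subtlety is to fix a single mesh exponent $N$ absorbing the perturbations in all of these simultaneously --- recalling (Remark \ref{Remark-overlap}) that the asymmetry between $f$ and $f^{-1}$ forces (E2.1)--(E2.2) to be tracked separately, which is why $\vt_1[\wh x]$ and $u(\wh f(\wh x))$ are carried in $\Delta(\wh x,p)$ --- while leaving $p$ and $Q_\ve$ untouched so that the identity (E2.3) is preserved exactly. A minor but necessary preliminary is the derivation of the lower bound $\rho(\wh x)\ge p^{\beta/(72a)}$ and the resulting bound on $u(\wh f(\wh x))$ from the defining inequality for $Q_\ve$, used for Discreteness.
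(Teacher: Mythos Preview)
Your proposal is correct and follows the same coarse-graining strategy as the paper: discretize the finite-dimensional parameter tuple $\Delta(\wh x,p)$, select representatives, and verify that the edge conditions and shadowing survive the perturbation. Your treatment is slightly cleaner in two places---you keep $Q_\ve\in I_\ve$ exact rather than rounding it (so (E2.3) holds verbatim with $p_n=q_\ve(\wh f^n(\wh x))$, via Lemma~\ref{Lemma-q}(2)) and you build Relevance into the choice of representatives from ${\rm NUE}_\chi^*$---whereas the paper allows $Q_\ve$ to drift by $e^{\pm\ve/3}$, defines $p_n$ in terms of the representatives' $Q_\ve$, and prunes irrelevant charts at the end.
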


Parts (1) and (3) are essential to prove the inverse theorem (Theorem \ref{Thm-inverse}).
Part (2) and Lemma \ref{Lemma-adaptedness} imply that if $\mu$ is $f$--adapted 
and $\chi$--expanding then $\wh\mu$--a.e. $\wh x\in\wh M$ is shadowed
by an $\ve$--gpo whose vertices belong to $\mathfs A$.


\begin{proof}
When $M$ is compact and $f$ is a diffeomorphism, the above statement is consequence
of Propositions 3.5, 4.5 and Lemmas 4.6, 4.7 of \cite{Sarig-JAMS}. When $M$ is compact with boundary
and $f$ is a local diffeomorphism with bounded derivatives, this is \cite[Prop. 4.3]{Lima-Sarig}.
When $f$ is a surface map with discontinuities and possibly unbounded derivatives,
this is \cite[Theorem 5.1]{Lima-Matheus}.
We follow the strategy of \cite{Lima-Matheus}, adapted to our context.

\medskip
For $t>0$, let $M_t=\{x\in M: d(x,\mathfs S)\geq t\}$.
Since $M$ has finite diameter (we are even assuming it is smaller than one),
each $M_t$ is compact.
Let $\N_0=\N\cup\{0\}$. Fix a countable open cover $\mathfs P=\{D_i\}_{i\in\N_0}$ of $M\backslash\mathfs S$ s.t.:
\begin{enumerate}[$\circ$]
\item $D_i:=D_{z_i}=B(z_i,2\mathfrak r(z_i))$ for some $z_i\in M$.
\item For every $t>0$, $\{D\in\mathfs P:D\cap M_t\neq\emptyset\}$ is finite.
\end{enumerate}

\medskip
Let $X:=M^3\times (0,\infty)^3\times (0,1]$. For $\wh x\in{\rm NUE}_\chi^*$, let
$\Gamma(\wh x)=(\un{\wh x},\un u,\un Q)\in X$ where:
\begin{align*}
&\un{\wh x}=(\vt_{-1}[\wh x],\vt_0[\wh x],\vt_1[\wh x]),
\ \un u=(u(\wh f^{-1}(\wh x)),u(\wh x),u(\wh f(\wh x))),\ \un Q=Q_\ve(\wh x).
\end{align*}
Let $Y=\{\Gamma(\wh x):\wh x\in{\rm NUE}_\chi^*\}$. We want to construct a countable dense subset
of $Y$. Since the maps $\wh x\mapsto u(\wh x),Q_\ve(\wh x)$ are not necessarily continuous,
we apply a precompactness argument.
For vectors $\un{k}=(k_{-1},k_0,k_1),\un{\ell}=(\ell_{-1},\ell_0,\ell_1),\un a=(a_{-1},a_0,a_1)\in\N_0^3$
and $m\in\N_0$, define
$$
Y_{\un k,\un \ell,\un a,m}:=\left\{\Gamma(\wh x)\in Y:
\begin{array}{cl}
e^{-k_i-1}\leq d(\vt_i[\wh x],\mathfs S)< e^{-k_i},& -1\leq i\leq 1\\
e^{\ell_i}\leq u(\wh f^i(\wh x))<e^{\ell_i+1},&-1\leq i\leq 1\\
\vt_i[\wh x]\in D_{a_i},&-1\leq i\leq 1\\
e^{-m-1}\leq Q_\ve(\wh x)<e^{-m}&\\
\end{array}
\right\}.
$$

\medskip
\noindent
{\sc Claim 1:} $Y=\bigcup_{\un k,\un\ell,\un a\in\N_0^3\atop{m\in\N_0}}Y_{\un k,\un\ell,\un a,m}$, and each
$Y_{\un k,\un\ell,\un a,m}$ is precompact in $X$.

\medskip
\noindent
{\em Proof of Claim $1$.}
The first statement is clear, so we focus on the second.
Fix $\un k,\un\ell,\un a\in \N_0^3$, $m\in\N_0$, and
take $\Gamma(\wh x)\in Y_{\un k,\un\ell,\un a,m}$. Then
$$
\un{\wh x}\in M_{e^{-k_{-1}-1}}\times M_{e^{-k_0-1}}\times M_{e^{-k_1-1}},$$
a precompact subset of $M^3$.
For $|i|\leq 1$ we have $1\leq u(\wh f^i(\wh x))< e^{\ell_i+1}$, hence $\un u$ belongs to a compact
subset of $(0,\infty)^3$. Also $Q_\ve(\wh x)\in [e^{-m-1},1]$, therefore $\un Q$ belongs to a compact
subinterval of $(0,1]$. The product of precompact sets is precompact, thus the claim is proved.

\medskip
Let $j\geq 0$. By Claim 1, there is a finite set
$Y_{\un k,\un\ell,\un a,m}(j)\subset Y_{\un k,\un\ell,\un a,m}$
s.t. for every $\Gamma(\wh x)\in Y_{\un k,\un\ell,\un a,m}$
there exists $\Gamma(\wh y)\in Y_{\un k,\un\ell,\un a,m}(j)$
s.t.:
\begin{enumerate}[{\rm (a)}]
\item $d(\vt_i[\wh x],\vt_i[\wh y])+
|u(\wh f^i(\wh x))^{-1}-u(\wh f^i(\wh y))^{-1}|<e^{-8(j+2)}$
for $|i|\leq 1$.
\item $\tfrac{Q_\ve(\wh x)}{Q_\ve(\wh y)}=e^{\pm\frac{\ve}{3}}$.
\end{enumerate}
Remind that $I_\ve:=\{e^{-\frac{1}{3}\ve n}:n\geq 0\}$.

\medskip
\noindent
{\sc The alphabet $\mathfs A$:} Let $\mathfs A$ be the countable family of $\Psi_{\wh x}^p$ s.t.:
\begin{enumerate}[i i)]
\item[(CG1)] $\Gamma(\wh x)\in Y_{\un k,\un\ell,\un a,m}(j)$ for some
$(\un k,\un\ell,\un a,m,j)\in\N_0^3\times\N_0^3\times\N_0^3\times\N_0\times\N_0$.
\item[(CG2)] $p\in I_\ve$, $p\leq \delta_\ve Q_\ve(\wh x)$, and $e^{-j-2}\leq p\leq e^{-j+2}$.
\end{enumerate}

\medskip
\noindent
{\em Proof of discreteness.}
By the proof of Lemma \ref{Lemma-linear-reduction} and assumption (A2),
$$
u(\wh f(\wh x))^2=1+e^{2\chi}|df_{\vt_0[\wh x]}|^{-2}u(\wh x)^2\leq 2e^{2\chi}\rho(\wh x)^{-2a}u(\wh x)^2
<e^{2\chi+2}\rho(\wh x)^{-2a}u(\wh x)^2
$$
hence $u(\wh f(\wh x))<e^{\chi+1}\rho(\wh x)^{-a}u(\wh x)$. We will use this estimate below.

\medskip
Fix $0<t<1$, and let $\Psi_{\wh x}^p\in\mathfs A$ with $p>t$.
Start noting that $\rho(\wh x)>\rho(\wh x)^{2a}>Q_\ve(\wh x)>p>t$.
If $\Gamma(\wh x)\in Y_{\un k,\un\ell,\un a,m}(j)$ then:
\begin{enumerate}[$\circ$]
\item Finiteness of $\un k$: for $|i|\leq 1$,
$e^{-k_i}> d(\vt_i[\wh x],\mathfs S)\geq\rho(\wh x)>t$, hence $k_i< |\log t|$.
\item Finiteness of $\un\ell$: for $i=-1,0$, $e^{\ell_i}\leq u(\wh f^i(\wh x))<Q_\ve(\wh x)^{-1}<t^{-1}$,
hence $\ell_i<|\log t|$. For $i=1$, the estimate in the beginning of the proof implies
$e^{\ell_1}\leq u(\wh f(\wh x))<e^{\chi+1}\rho(\wh x)^{-a}u(\wh x)<e^{\chi+1}t^{-(a+1)}$,
hence $\ell_1<\chi+1+(a+1)|\log t|=:T_t$, which is bigger than $|\log t|$.
\item Finiteness of $\un a$: for $|i|\leq 1$, $\vt_i[\wh x]\in D_{a_i}\cap M_t$ hence $D_{a_i}$ 
belongs to the finite set $\{D\in\mathfs P:D\cap M_t\neq\emptyset\}$.
\item Finiteness of $m$: we have $e^{-m}>Q_\ve(\wh x)>t$, hence $m<|\log t|$.
\item Finiteness of $j$: $t<p\leq e^{-j+2}$, hence $j\leq |\log t|+2$.
\item Finiteness of $p$: $\#\{p\in I_\ve:p>t\}\leq \#(I_\ve\cap (t,1])$ is finite.
\end{enumerate}
The first five items above give that, for $\un a\in\N_0^3$ and $t>0$,
\begin{align*}
\#\left\{\Gamma(\wh x):
\begin{array}{c}
\Psi_{\wh x}^p\in\mathfs A\text{ s.t. }p>t\text{ and }\\
\vt_i[\wh x]\in D_{a_i}\text{ for }|i|\leq 1
\end{array}
\right\}\leq\sum_{j=0}^{\lceil |\log t|\rceil+2}
\sum_{k_i,\ell_i,m=0}^{T_t}
\# Y_{\un k,\un\ell,\un a,m}(j)
\end{align*}
is the finite sum of finite terms, hence finite. Together with the last item above and the
choice of $\mathfs P$, we obtain that
\begin{align*}
\#\left\{\Psi_{\wh x}^p\in\mathfs A:p>t\right\}&\leq 
\sum_{j=0}^{\lceil |\log t|\rceil+2}
\sum_{k_i,\ell_i,m=0}^{T_t}
\# Y_{\un k,\un\ell,\un a,m}(j)\\
&\ \ \ \ \times (\#\{D\in\mathfs P:D\cap M_t\neq\emptyset\})^3\times (\#(I_\ve\cap (t,1]))
\end{align*}
is finite. This proves the discreteness property of $\mathfs A$.

\medskip
\noindent
{\em Proof of sufficiency.}
Let $\wh x\in {\rm NUE}_\chi^*$. Take $(k_i)_{i\in\Z},(\ell_i)_{i\in\Z},(m_i)_{i\in\Z},(a_i)_{i\in\Z},(j_i)_{i\in\Z}$  s.t.:
\begin{align*}
& d(\vt_i[\wh x],\mathfs S)\in [e^{-k_i-1},e^{-k_i}),
\ u(\wh f^i(\wh x))\in [e^{\ell_i},e^{\ell_i+1}),\ Q_\ve(\wh f^i(\wh x))\in [e^{-m_i-1},e^{-m_i}),\\
&\vt_i[\wh x]\in D_{a_i},\ q_\ve(\wh f^i(\wh x))\in[e^{-j_i-1},e^{-j_i+1}).
\end{align*}
For $n\in\Z$, define
\begin{align*}
&\un k^{(n)}=(k_{n-1},k_n,k_{n+1}),\ \un\ell^{(n)}=(\ell_{n-1},\ell_n,\ell_{n+1}),\ \un a^{(n)}=(a_{n-1},a_n,a_{n+1}).
\end{align*}
Then $\Gamma(\wh f^n(\wh x))\in Y_{\un k^{(n)},\un\ell^{(n)},\un a^{(n)},m_n}$.
Take $\Gamma(\wh x_n)\in Y_{\un k^{(n)},\un\ell^{(n)},\un a^{(n)},m_n}(j_n)$ s.t.:
\begin{enumerate}[aaa)]
\item[(${\rm a}_n$)] $ d(\vt_i[\wh f^n(\wh x)],\vt_i[\wh x_n])+
|u(\wh f^{i}(\wh f^n(\wh x)))^{-1}-u(\wh f^i(\wh x_n))^{-1}|<e^{-8(j_n+2)}$ for $|i|\leq 1$.
\item[(${\rm b}_n$)] $\tfrac{Q_\ve(\wh f^n(\wh x))}{Q_\ve(\wh x_n)}=e^{\pm\frac{\ve}{3}}$.
\end{enumerate}
Define $p_n=\delta_\ve\min\{e^{\ve|k|}Q_\ve(\wh x_{n+k}):k\leq 0\}$.
We claim that $\{\Psi_{\wh x_n}^{p_n}\}_{n\in\Z}$ is an $\ve$--gpo
in $\mathfs A^\Z$ that shadows $\wh x$.

\medskip
\noindent
{\sc Claim 2:} $\Psi_{\wh x_n}^{p_n}\in\mathfs A$ for all $n\in\Z$.

\medskip
\noindent
(CG1) By definition, $\Gamma(\wh x_n)\in Y_{\un k^{(n)},\un\ell^{(n)},\un a^{(n)},m_n}(j_n)$.

\noindent
(CG2) By (${\rm b}_n$),
$\min\{e^{\ve|k|}Q_\ve(\wh x_{n+k}):k\leq 0\}=
e^{\pm\frac{\ve}{3}}\min\{e^{\ve|k|}Q_\ve(\wh f^{n+k}(\wh x)):k\leq 0\}$
hence $p_n$ is well-defined and satisfies $p_n=e^{\pm\frac{\ve}{3}}q_\ve(\wh f^n(\wh x))$.
Therefore $p_n\in I_\ve$, $p_n\leq \delta_\ve Q_\ve(\wh x_n)$,
and $p_n\in[e^{-j_n-2},e^{-j_n+2})$.

\medskip
\noindent
{\sc Claim 3:} $\Psi_{\wh x_n}^{p_n}\overset{\ve}{\leftarrow}\Psi_{\wh x_{n+1}}^{p_{n+1}}$
for all $n\in\Z$.

\medskip
\noindent
(E1) By (${\rm a}_{n+1}$) with $i=-1$ and (${\rm a}_n$) with $i=0$,
\begin{align*}
&\ d(\vt[\wh f^{-1}(\wh x_{n+1})],\vt[\wh x_n])+
|u(\wh f^{-1}(\wh x_{n+1}))^{-1}-u(\wh x_n)^{-1}|\\
&\leq d(\vt_{-1}[\wh x_{n+1}],\vt_{-1}[\wh f^{n+1}(\wh x)])+
|u(\wh f^{-1}(\wh x_{n+1}))^{-1}-u(\wh f^n(\wh x))^{-1}|\\
&\ \ \ \,+ d(\vt_0[\wh f^n(\wh x)],\vt_0[\wh x_n])+
|u(\wh f^n(\wh x))^{-1}-u(\wh x_n)^{-1}|\\
&<e^{-8(j_{n+1}+2)}+e^{-8(j_n+2)}.
\end{align*}
Note that 
\begin{align*}
&\ e^{-8(j_{n+1}+2)}+e^{-8(j_n+2)}\leq e^{-8}\left(q_\ve(\wh f^{n+1}(\wh x))^8+
q_\ve(\wh f^n(\wh x))^8\right)\\
&\overset{!}{\leq} e^{-8}(1+e^{8\ve})q_\ve(\wh f^n(\wh x))^8\leq
e^{-8+\frac{8\ve}{3}}(1+e^{8\ve})p_n^8\overset{!!}{<}p_n^8,
\end{align*}
where in $\overset{!}{\leq}$ we used Lemma \ref{Lemma-q}(2) and in $\overset{!!}{<}$ we used
that $e^{-8+\frac{8\ve}{3}}(1+e^{8\ve})<1$ when $\ve>0$ is sufficiently small. Therefore
$\Psi_{\wh f^{-1}(\wh x_{n+1})}^{p_n}\overset{\ve}{\approx}\Psi_{\wh x_n}^{p_n}$.

\medskip
\noindent
(E2) We will use the inequality $e^{-8(j_{n+1}+2)}+e^{-8(j_n+2)}<p_n^8$ proved above.

\medskip
\noindent
(E2.1) As remarked before, it follows directly from condition (E1).


\medskip
\noindent
(E2.2) By (${\rm a}_{n}$) with $i=1$ and (${\rm a}_{n+1}$) with $i=0$ we have
$|u(\wh f^{n+1}(\wh x))^{-1}-u(\wh f(\wh x_n))^{-1}|<e^{-8(j_n+2)}<p_n^8$
and $|u(\wh f^{n+1}(\wh x))^{-1}-u(\wh x_{n+1})^{-1}|<e^{-8(j_{n+1}+2)}<p_{n+1}^8$.
Proceeding as in the proof of Proposition \ref{Lemma-overlap}(1),
this first inequality implies that $\tfrac{u(\wh f^{n+1}(\wh x))}{u(\wh f(\wh x_n))}=e^{\pm p_n^6}$
and the second implies that $\tfrac{u(\wh f^{n+1}(\wh x))}{u(\wh x_{n+1})}=e^{\pm p_{n+1}^6}$.
Since $p_n^6\ll\tfrac{p_n}{2}$ and $p_{n+1}^6\leq e^{6\ve} p_n^6\ll\tfrac{p_n}{2}$, it follows that
$\tfrac{u(\wh f(\wh x_n))}{u(\wh x_{n+1})}=e^{\pm p_n}$.

\medskip
\noindent
(E2.3) The definition of $p_n$ guarantees that $p_{n+1}=\min\{e^\ve p_n,\delta_\ve Q_\ve(\wh x_{n+1})\}$.


\medskip
\noindent
{\sc Claim 4:} $\{\Psi_{\wh x_n}^{p_n}\}_{n\in\Z}$ shadows $\wh x$.

\medskip
By (${\rm a}_n$) with $i=0$, we have
$\Psi_{\wh f^n(\wh x)}^{p_n}\overset{\ve}{\approx}\Psi_{\wh x_n}^{p_n}$, hence 
by Proposition \ref{Lemma-overlap}(2) we have
$\vt_n[\wh x]=\vt[\wh f^n(\wh x)]=\Psi_{\wh f^n(\wh x)}(0)\in \Psi_{\wh x_n}(R[p_n])$,
therefore $\{\Psi_{\wh x_n}^{p_n}\}_{n\in\Z}$ shadows $\wh x$.

\medskip
This concludes the proof of sufficiency. Note that if $\wh x\in{\rm NUE}_\chi^\#$ then the
$\ve$--gpo constructed above belongs to $\Sigma^\#$. This observation will be used in the proof
of Proposition \ref{Prop-pi} below.

\medskip
\noindent
{\em Proof of relevance.} The family $\mathfs A$ might not a priori satisfy
the relevance condition, but we can easily reduce it to a sub-alphabet $\mathfs A'$ satisfying (1)--(3) as follows.
Call $v\in\mathfs A$ relevant if there is $\un v=\{v_n\}_{n\in\Z}\in\mathfs A^\Z$ with $v_0=v$ s.t. $\un{v}$ shadows
a point in ${\rm NUE}_\chi^*$. When this happens then every $v_n$ is relevant
(since ${\rm NUE}_\chi^*$ is $\wh f$--invariant).
Therefore $\mathfs A'=\{v\in\mathfs A:v\text{ is relevant}\}$ is discrete
because $\mathfs A'\subset\mathfs A$, and it is sufficient and relevant by definition.
\end{proof}

Let $\Sigma$ be the TMS associated to the graph with vertex set $\mathfs A$ given by
Theorem \ref{Thm-coarse-graining} and
edges $v\overset{\ve}{\leftarrow}w$. An element of $\Sigma$ is an $\ve$--gpo, hence
we define $\pi:\Sigma\to \wh M$ by
$$
\{\pi[\{v_n\}_{n\in\Z}]\}:=V^s[\{v_n\}_{n\geq 0}]\cap V^u[\{v_n\}_{n\leq 0}].
$$
Here are the main properties of the triple $(\Sigma,\sigma,\pi)$.

\begin{proposition}\label{Prop-pi}
The following holds for all $\ve>0$ small enough.
\begin{enumerate}[{\rm (1)}]
\item $\pi:\Sigma\to {\wh M}$ is H\"older continuous.
\item $\pi\circ\sigma=\wh f\circ\pi$.
\item $\pi[\Sigma^\#]\supset{\rm NUE}_\chi^\#$.
\end{enumerate} 
\end{proposition}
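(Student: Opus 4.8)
The plan is to settle the well-posedness of $\pi$ together with (2) and (3) using the machinery already in place, and to reserve the real work for the Hölder estimate (1). Every edge $v\overset{\ve}{\leftarrow}w$ is in particular a weak edge ((E1) is (WE1), and (E2.3) forces $p\le e^\ve q$, i.e.\ (WE2)), so each $\un v\in\Sigma$ is a weak $\ve$--gpo; by Lemma \ref{Lemma-shadowing} it shadows a unique point of $\wh M$, which by the definitions of $V^s,V^u$ and Proposition \ref{Prop-stable-manifolds}(1) is exactly the single element of $V^s[\{v_n\}_{n\ge0}]\cap V^u[\{v_n\}_{n\le0}]$. Thus $\pi$ is well defined and $\pi(\un v)$ is the unique point shadowed by $\un v$. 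Property (2) then follows immediately: if $\un v=\{\Psi_{\wh x_n}^{p_n}\}_{n\in\Z}$ shadows $\wh x=\pi(\un v)$ then $\vt_n[\wh f(\wh x)]=\vt_{n+1}[\wh x]\in\Psi_{\wh x_{n+1}}(R[p_{n+1}])$ for every $n$, so $\wh f(\wh x)$ is shadowed by $\sigma(\un v)$, and uniqueness gives $\wh f\circ\pi=\pi\circ\sigma$.

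For (3), given $\wh x\in{\rm NUE}_\chi^\#$ let $\un v=\{\Psi_{\wh x_n}^{p_n}\}_{n\in\Z}$ be the $\ve$--gpo produced in the Sufficiency part of Theorem \ref{Thm-coarse-graining}; it shadows $\wh x$, all of its vertices are relevant (each is traversed by an $\ve$--gpo shadowing $\wh x\in{\rm NUE}_\chi^*$), so $\un v\in\Sigma$, and $p_n=e^{\pm\ve/3}q_\ve(\wh f^n(\wh x))$. By the previous paragraph $\pi(\un v)=\wh x$, so it only remains to check $\un v\in\Sigma^\#$. By the definition of ${\rm NUE}_\chi^\#$ there are $n_j\to+\infty$ and $m_j\to-\infty$ with $q_\ve(\wh f^{n_j}(\wh x)),q_\ve(\wh f^{m_j}(\wh x))\ge\delta$ for some $\delta>0$; hence $p_{n_j},p_{m_j}\ge e^{-\ve/3}\delta=:t>0$, and since $\{\Psi_{\wh z}^p\in\mathfs A:p>t\}$ is finite (Discreteness in Theorem \ref{Thm-coarse-graining}) some vertex occurs infinitely often among the $v_{n_j}$ and some among the $v_{m_j}$, so $\un v\in\Sigma^\#$. (This is exactly the remark recorded at the end of the proof of Theorem \ref{Thm-coarse-graining}.) Hence $\wh x\in\pi[\Sigma^\#]$.

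The heart of the matter is (1). Let $\un v=\{\Psi_{\wh x_n}^{p_n}\}_{n\in\Z}\ne\un w=\{\Psi_{\wh w_n}^{p'_n}\}_{n\in\Z}$ in $\Sigma$, set $N+1:=\min\{|n|:v_n\ne w_n\}$ (so $v_n=w_n$ for $|n|\le N$ and $d(\un v,\un w)=e^{-N-1}$), $\wh x=\pi(\un v)$, $\wh y=\pi(\un w)$, $\theta:=\min\{\chi/2,\log2\}$; if $N=-1$ the claim is trivial since $\wh d\le\tfrac12$ and $d(\un v,\un w)=1$, so assume $N\ge0$. Equality of the vertices $v_n=w_n$ means equality of the data $\Gamma(\wh x_n),p_n$ defining the vertex, hence $\Psi_{\wh x_n}=\Psi_{\wh w_n}$, $p_n=p'_n$, $Q_\ve(\wh x_n)=Q_\ve(\wh w_n)$ and $g_{\wh x_n}=g_{\wh w_n}$ (the inverse branch depends only on $\vt_{-1}$), whence $G_{\wh x_n,\wh x_{n-1}}=G_{\wh w_n,\wh w_{n-1}}$ for $|n|,|n-1|\le N$; one never needs $\wh x_n=\wh w_n$ as points of $\wh M$. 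For $|n|\le N$ write $\vt_n[\wh x]=\Psi_{\wh x_n}(t_n)$ and $\vt_n[\wh y]=\Psi_{\wh x_n}(s_n)$ with $t_n,s_n\in R[p_n]$ (both points are shadowed); by Lemma \ref{Lemma-edge}(2), $t_{n-1}=G_{\wh x_n,\wh x_{n-1}}(t_n)$ and $s_{n-1}=G_{\wh x_n,\wh x_{n-1}}(s_n)$ for $-N<n\le N$. As each $G_{\wh x_n,\wh x_{n-1}}$ contracts by at least $e^{-\chi/2}$ on $R[10Q_\ve(\wh x_n)]\supset R[p_n]$ (Theorem \ref{Thm-non-linear-Pesin-2}), anchoring at $|t_N-s_N|\le2p_N<2$ and iterating toward the past gives $|t_n-s_n|\le2e^{-\chi(N-n)/2}\le2e^{-\chi N/2}$ for all $-N\le n\le0$. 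Since $\Psi_{\wh x_n}$ is $1$--Lipschitz, $d(\vt_n[\wh x],\vt_n[\wh y])\le|t_n-s_n|\le2e^{-\chi N/2}$ for $-N\le n\le0$, while for $n<-N$ one has $2^nd(\vt_n[\wh x],\vt_n[\wh y])\le2^n\,{\rm diam}(M)<2^{-N}$. Taking the supremum over $n\le0$,
$$
\wh d(\pi\un v,\pi\un w)\le\max\{2e^{-\chi N/2},\,2^{-N}\}\le2e^{-\theta N}=2e^\theta\,d(\un v,\un w)^\theta,
$$
so $\pi$ is $\theta$--Hölder. The only delicate step is this last computation: because $\wh d$ weighs only the coordinates $n\le0$ while the maps $G_{\wh x_n,\wh x_{n-1}}$ contract \emph{toward the past}, one must anchor the recursion at the most recent index $n=N$ at which the two gpo's still agree and propagate it backward, so that every coordinate $n\le0$ inherits the decay $e^{-\chi N/2}$, and then discard the far past $n<-N$ via the diameter bound damped by $2^n$.
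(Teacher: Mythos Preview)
Your proof is correct and follows essentially the same approach as the paper: the paper simply cites Proposition \ref{Prop-stable-manifolds}(3) for part (1), Proposition \ref{Prop-stable-manifolds}(2) for part (2), and the remark at the end of the sufficiency proof of Theorem \ref{Thm-coarse-graining} for part (3), while you unfold these references into explicit computations. Your H\"older estimate is precisely the mechanism behind Proposition \ref{Prop-stable-manifolds}(3)(a)---anchoring at $n=N$ and propagating the contraction of the $G_{\wh x_n,\wh x_{n-1}}$ backward through the shared block, then handling $n<-N$ with the weight $2^n$---so there is no genuine difference in strategy.
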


Part (1) follows from Proposition \ref{Prop-stable-manifolds}(3),
part (2) follows from Proposition \ref{Prop-stable-manifolds}(2),
and part (3) is a direct consequence of the observation in the end
of the proof of Theorem \ref{Thm-coarse-graining}(2).

\begin{remark}\label{Remark-finite-degree}
It is important noticing the difference between our $(\Sigma,\sigma)$ and
those constructed in \cite{Sarig-JAMS,Lima-Sarig,Lima-Matheus}:
while the later ones have finite ingoing and outgoing degrees (thus $\Sigma$ is locally compact),
our symbolic space does not necessarily satisfy this. The reason is that condition (E2.3) does not imply
a lower bound on $p_{n+1}$. This non-finiteness property also holds in Hofbauer towers.
\end{remark}

In general $(\Sigma,\sigma,\pi)$ does {\em not} satisfy Theorem \ref{Thm-Main},
since $\pi$ might be infinite-to-one. We use $\pi$ to induce a locally
finite cover of ${\rm NUE}_\chi^\#$, which will then be refined to a partition of ${\rm NUE}_\chi^\#$
that will lead to the proof of Theorem \ref{Thm-Main}.

\section{The inverse problem}

Our goal now is to analyze when $\pi$ loses injectivity. More specifically, given that
$\pi(\un{v})=\pi(\un{w})$ we want to compare $v_n$ with $w_n$ and show that one is defined
by the other ``up to bounded error''. We do this under the additional assumption
that $\un{v},\un{w}\in\Sigma^\#$. Remind that $\Sigma^\#$ is the {\em recurrent set} of $\Sigma$:
$$
\Sigma^\#:=\left\{\un v\in\Sigma:\exists v,w\in V\text{ s.t. }\begin{array}{l}v_n=v\text{ for infinitely many }n>0\\
v_n=w\text{ for infinitely many }n<0
\end{array}\right\}.
$$
The main result of this section is the following theorem.

\begin{theorem}[Inverse theorem]\label{Thm-inverse}
The following holds for $\ve>0$ small enough.
If $\{\Psi_{\wh x_n}^{p_n}\}_{n\in\Z},\{\Psi_{\wh y_n}^{q_n}\}_{n\in\Z}\in\Sigma^\#$ satisfy
$\pi[\{\Psi_{\wh x_n}^{p_n}\}_{n\in\Z}]=\pi[\{\Psi_{\wh y_n}^{q_n}\}_{n\in\Z}]$ then for all $n\in\Z$:
\begin{enumerate}[{\rm (1)}]
\item $d(\vt[\wh x_n],\vt[\wh y_n])\leq 2\max\{p_n,q_n\}$.
\item $\tfrac{u(\wh x_n)}{u(\wh y_n)}=e^{\pm 2\sqrt{\ve}}$.
\item $\tfrac{Q_\ve(\wh x_n)}{Q_\ve(\wh y_n)}=e^{\pm \sqrt[3]{\ve}}$.
\item $\tfrac{p_n}{q_n}=e^{\pm\sqrt[3]{\ve}}$.
\item $(\Psi_{\wh y_n}^{-1}\circ\Psi_{\wh x_n})(t)=t+\Delta_n(t)+\delta_n$
for $t\in R[10Q_\ve(\wh x_n)]$, where $\delta_n\in\R$ with $|\delta_n|<3q_n$ and
$\Delta_n:R[10Q_\ve(\wh x_n)]\to\R$ with $\Delta_n(0)=0$ and
$\|d\Delta_n\|_0<4\sqrt{\ve}$.
\end{enumerate}
\end{theorem}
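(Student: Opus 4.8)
Throughout write $\wh z:=\pi[\{\Psi_{\wh x_n}^{p_n}\}_{n\in\Z}]=\pi[\{\Psi_{\wh y_n}^{q_n}\}_{n\in\Z}]$ and $\wh z_n:=\wh f^n(\wh z)$; then for every $n$ one has $\vt[\wh z_n]=\vt_n[\wh z]\in\Psi_{\wh x_n}(R[p_n])\cap\Psi_{\wh y_n}(R[q_n])$, and the $n$--th shifts of the two gpo's again belong to $\Sigma^\#$ and map to $\wh z_n$ under $\pi$. So it suffices to treat $n=0$. Part (1) will be immediate: writing $\vt_0[\wh z]=\Psi_{\wh x_0}(t)=\Psi_{\wh y_0}(s)$ with $t\in R[p_0]$, $s\in R[q_0]$, and using that $\Psi_{\wh x_0},\Psi_{\wh y_0}$ have slopes $u(\wh x_0)^{-1},u(\wh y_0)^{-1}\le 1$, I get $d(\vt[\wh x_0],\vt_0[\wh z])\le p_0/u(\wh x_0)\le p_0$ and $d(\vt[\wh y_0],\vt_0[\wh z])\le q_0/u(\wh y_0)\le q_0$, and the triangle inequality gives (1). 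The finer bounds $d(\vt[\wh x_0],\vt_0[\wh z])\le p_0/u(\wh x_0)$, $d(\vt[\wh y_0],\vt_0[\wh z])\le q_0/u(\wh y_0)$ will be needed again for (5).

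The heart of the matter is (2), and everything else will follow from it. The plan is to prove, in the stronger cocycle form, that $\|\wh{df}^{(-m)}_{\wh x_0}\|=e^{\pm\sqrt\ve}\|\wh{df}^{(-m)}_{\wh z_0}\|$ and $\|\wh{df}^{(-m)}_{\wh y_0}\|=e^{\pm\sqrt\ve}\|\wh{df}^{(-m)}_{\wh z_0}\|$ for \emph{all} $m\ge 0$; summing the defining series of $u$ then yields (2), and also $u(\wh x_0)^2-1=e^{\pm2\sqrt\ve}(u(\wh y_0)^2-1)$, which is what (3) really needs. The device is Proposition \ref{Prop-stable-manifolds}(3)(b): $\wh z_0$ lies in the unstable set $V^u[\{\Psi_{\wh x_k}^{p_k}\}_{k\le 0}]$, hence its backward cocycle is comparable, within $Q_\ve(\wh x_0)^{\beta/4}$, to that of the central point of this unstable set. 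The delicate point is that the gpo does not in general shadow its own centers, so comparing the central point's backward cocycle with $\wh{df}^{(\cdot)}_{\wh x_0}$ itself is where the recurrence $\un v\in\Sigma^\#$ must be used: one picks $n_j\to-\infty$ along which $\Psi_{\wh x_{n_j}}^{p_{n_j}}$ is a \emph{fixed} $\ve$--chart, so $u(\wh x_{n_j})$ is a fixed known value, and telescopes from $n_j$ up to $0$ — along the backward orbit of $\wh z_0$ everything is controlled by the shadowing windows and by the uniform contraction of the inverse branches in charts (Theorem \ref{Thm-non-linear-Pesin-2}), while between consecutive recurrence times the gpo axioms (E1), (E2.2), the tempering of $Q_\ve$ (Lemma \ref{Lemma-temperedness}) and the H\"older bound (A3) keep the discrepancy from growing, the contraction forcing the accumulated error below $\sqrt\ve$ once $\ve$ is small. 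This is the analogue of the corresponding estimates in \cite[\S 6]{Sarig-JAMS} and \cite[\S 7]{Lima-Matheus}, adapted to the non-invertible one--dimensional setting, and I expect it to be the main obstacle.

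Granting (1) and (2), the remaining parts are bookkeeping. For (3): $Q_\ve(\wh x_0)$ is the largest element of $I_\ve$ below $\widetilde Q_\ve(\wh x_0)=\ve^{3/\beta}\min\{u(\wh x_0)^{-24/\beta},u(\wh f^{-1}(\wh x_0))^{-12/\beta}\rho(\wh x_0)^{72a/\beta}\}$, so I only need to compare $u(\wh x_0)$, $u(\wh f^{-1}(\wh x_0))$ and $\rho(\wh x_0)$ with the corresponding quantities for $\wh y_0$ up to error $\ll\sqrt[3]\ve$, the extra $e^{\pm\ve/3}$ from the discretization $I_\ve$ being harmless. The first is (2). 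For the others: (E1) for the edge $v_{-1}\overset{\ve}{\leftarrow}v_0$ gives $\vt_{-1}[\wh x_0]\approx\vt[\wh x_{-1}]$ and $\vt_{-1}[\wh y_0]\approx\vt[\wh y_{-1}]$, while $\vt[\wh x_{-1}]\approx\vt[\wh y_{-1}]$ by (1) at index $-1$, and $\vt_1[\wh x_0]=f(\vt_0[\wh x_0])$ is close to $\vt_1[\wh y_0]=f(\vt_0[\wh y_0])$ because $f$ has bounded derivative on the relevant neighborhood by (A2); hence $\rho(\wh x_0)=e^{\pm\sqrt[3]\ve}\rho(\wh y_0)$ and, by (A3), $|df_{\vt_{-1}[\wh x_0]}|$ and $|df_{\vt_{-1}[\wh y_0]}|$ are comparable. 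Plugging this and the cocycle form of (2) into the rewriting $u(\wh f^{-1}(\wh x_0))=e^{-\chi}|df_{\vt_{-1}[\wh x_0]}|\sqrt{u(\wh x_0)^2-1}$ of Lemma \ref{Lemma-linear-reduction} gives $u(\wh f^{-1}(\wh x_0))=e^{\pm\sqrt[3]\ve}u(\wh f^{-1}(\wh y_0))$, whence (3). For (4): since $\un v,\un w\in\Sigma^\#$, unrolling (E2.3) along the recurrence times yields $p_0=\delta_\ve\min_{k\ge 0}e^{\ve k}Q_\ve(\wh x_{-k})$ and $q_0=\delta_\ve\min_{k\ge 0}e^{\ve k}Q_\ve(\wh y_{-k})$; as (3) holds at every index, $Q_\ve(\wh x_{-k})=e^{\pm\sqrt[3]\ve}Q_\ve(\wh y_{-k})$ for all $k$, so $p_0=e^{\pm\sqrt[3]\ve}q_0$, proving (4).

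Finally (5): the map $\Psi_{\wh y_0}^{-1}\circ\Psi_{\wh x_0}$ is globally affine, equal to $t\mapsto\tfrac{u(\wh y_0)}{u(\wh x_0)}t+u(\wh y_0)(\vt[\wh x_0]-\vt[\wh y_0])$, so setting $\Delta_0(t):=\big(\tfrac{u(\wh y_0)}{u(\wh x_0)}-1\big)t$ and $\delta_0:=u(\wh y_0)(\vt[\wh x_0]-\vt[\wh y_0])$ one has $(\Psi_{\wh y_0}^{-1}\circ\Psi_{\wh x_0})(t)=t+\Delta_0(t)+\delta_0$ with $\Delta_0(0)=0$ and $\|d\Delta_0\|_0=\big|\tfrac{u(\wh y_0)}{u(\wh x_0)}-1\big|\le e^{2\sqrt\ve}-1<4\sqrt\ve$ by (2); and by the finer bound from the first paragraph, $|\delta_0|=u(\wh y_0)\,d(\vt[\wh x_0],\vt[\wh y_0])\le p_0\tfrac{u(\wh y_0)}{u(\wh x_0)}+q_0\le e^{2\sqrt\ve}e^{\sqrt[3]\ve}q_0+q_0<3q_0$ using (2) and (4). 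To summarize, the only genuinely substantial step is the recurrence/telescoping argument establishing the cocycle comparison underlying (2); (1) is elementary and (3)--(5) are formal consequences of (1)--(2), the definitions of $Q_\ve$ and $q_\ve$, and the $\Sigma^\#$ assumption.
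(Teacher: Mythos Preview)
Your treatment of parts (1), (4) and (5) matches the paper's. The gap is in part (2), and it propagates to (3).

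The stronger claim you aim for, $\|\wh{df}^{(-m)}_{\wh x_0}\|=e^{\pm\sqrt\ve}\|\wh{df}^{(-m)}_{\wh z_0}\|$ for \emph{all} $m\ge 0$, is not available. The point $\wh x_0\in{\rm NUE}_\chi^*$ is a genuine element of $\wh M$, and for $m\ge 2$ the product $\wh{df}^{(-m)}_{\wh x_0}$ involves $df_{\vt_{-2}[\wh x_0]}, df_{\vt_{-3}[\wh x_0]},\ldots$, coordinates over which the gpo axioms give you no control whatsoever: (E1), (E2.1), (E2.2) only constrain $\vt_i[\wh x_0]$ for $|i|\le 1$. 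Proposition~\ref{Prop-stable-manifolds}(3)(b) compares the cocycles of two points \emph{in the same unstable set}, but $\wh x_0$ is typically not in $V^u[\{\Psi_{\wh x_k}^{p_k}\}_{k\le 0}]$; the ``central point'' of that set shares its zeroth coordinate with $\wh x_0$ but has a different backward history, so the bridge you try to build (``telescope from $n_j$ up to $0$'') collapses at step $m=2$. This is not a matter of sharpening estimates --- the information simply is not there.

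The paper circumvents this entirely. It never compares cocycles term by term. It uses the forward recursion $u(\wh f(\wh w))^2=1+e^{2\chi}|df_{\vt[\wh w]}|^{-2}u(\wh w)^2$ from Lemma~\ref{Lemma-linear-reduction} to propagate the \emph{ratio} $\tfrac{u(\wh f^n(\wh z))}{u(\wh x_n)}$ forward in $n$: an Improvement Lemma shows that if this ratio is $e^{\pm\xi}$ with $\xi\ge\sqrt\ve$, then at the next step it becomes $e^{\pm(\xi-Q_\ve(\wh x_n)^{\beta/4})}$. The ``$1+$'' in the recursion is what produces the contraction of the error, and only $df_{\vt[\wh x_n]}$ versus $df_{\vt_n[\wh z]}$ enters --- both controlled by shadowing and (A3) --- together with (E2.2) to pass from $u(\wh f(\wh x_n))$ to $u(\wh x_{n+1})$. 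The recurrence $\un v\in\Sigma^\#$ (plus relevance of the repeated chart) is used exactly once, to provide an initial bound $e^{\pm\xi_0}$ along a sequence $n_k\to-\infty$; the improvement then drives the ratio below $e^{\pm\sqrt\ve}$.

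For (3), your route via $u(\wh f^{-1}(\wh x_0))=e^{-\chi}|df_{\vt_{-1}[\wh x_0]}|\sqrt{u(\wh x_0)^2-1}$ needs the unavailable cocycle form to compare $u(\wh x_0)^2-1$ with $u(\wh y_0)^2-1$ (part (2) alone does not control this ratio when $u$ is close to $1$). The paper instead reads $\tfrac{u(\wh f^{-1}(\wh x_n))}{u(\wh x_{n-1})}=e^{\pm p_{n-1}^6}$ directly off the overlap (E1) via Proposition~\ref{Lemma-overlap}(1), then chains with part (2) at index $n-1$.
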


The substantial differences of the above theorem from \cite[Thm 5.2]{Sarig-JAMS} rely on part (5):
in our case we can only obtain estimates inside the smaller rectangle $R[10Q_\ve(\wh x_n)]$,
and our estimate on $\delta_n$ is slightly weaker. This latter fact is the
reason we introduced weak edges, as it will be clear in section \ref{Section-symbolic-dynamics}
(see Remark \ref{Remark-explanation-weak-gpo}).
Part (1) is proved similarly to \cite[Prop. 5.3]{Sarig-JAMS}, as follows.
Write $\pi[\{\Psi_{\wh x_n}^{p_n}\}_{n\in\Z}]=\pi[\{\Psi_{\wh y_n}^{q_n}\}_{n\in\Z}]=\wh x$.
For each $n\in\Z$, $\vt_n[\wh x]=\Psi_{\wh x_n}(t)$ for some $t\in R[p_n]$.
Since $\Psi_{\wh x_n}(0)=\vt[\wh x_n]$ and $\Psi_{\wh x_n}$ is $1$--Lipschitz,
$d(\vt_n[\wh x],\vt[\wh x_n])\leq p_n$.
Similarly $d(\vt_n[\wh x],\vt[\wh y_n])\leq q_n$,
and so $d(\vt[\wh x_n],\vt[\wh y_n])\leq p_n+q_n$, which is better than stated in part (1).

\medskip
Before proceeding to the other parts, we discuss two consequences of part (1).
The first one is that for $\ve>0$ small enough it holds:
\begin{align}\label{estimates-distances}
\frac{1}{2}\leq \frac{d(\vt_i[\wh x_n],\mathfs S)^a}{d(\vt_i[\wh y_n],\mathfs S)^a}\leq 2,\ \forall |i|\leq 1.
\end{align}
Take $i=0$, and start noting that
$d(\vt[\wh x_n],\vt[\wh y_n])\leq p_n+q_n<\ve[d(\vt[\wh x_n],\mathfs S)+d(\vt[\wh y_n],\mathfs S)]$
hence $d(\vt[\wh x_n],\mathfs S)=d(\vt[\wh y_n],\mathfs S)\pm
d(\vt[\wh x_n],\vt[\wh y_n])
=d(\vt[\wh y_n],\mathfs S)\pm
\ve[d(\vt[\wh x_n],\mathfs S)+d(\vt[\wh y_n],\mathfs S)]$, thus
$\tfrac{1-\ve}{1+\ve}\leq \tfrac{d(\vt[\wh x_n],\mathfs S)}{d(\vt[\wh y_n],\mathfs S)}\leq \tfrac{1+\ve}{1-\ve}$.
If $\ve>0$ is small enough then $\tfrac{1+\ve}{1-\ve}<e^{3\ve}$, and so
\begin{equation}\label{equation-distances-1}
\frac{d(\vt[\wh x_n],\mathfs S)}{d(\vt[\wh y_n],\mathfs S)}=e^{\pm 3\ve}.
\end{equation}
Now take $i=-1$.
By (E1), $d(\vt_{-1}[\wh x_n],\vt[\wh x_{n-1}])\leq p_{n-1}^8<\ve d(\vt[\wh x_{n-1}],\mathfs S)$
and so $d(\vt_{-1}[\wh x_n],\mathfs S)=d(\vt[\wh x_{n-1}],\mathfs S)\pm d(\vt_{-1}[\wh x_n],\vt[\wh x_{n-1}])=
(1\pm\ve)d(\vt[\wh x_{n-1}],\mathfs S)$. If $\ve>0$ is small enough then
$e^{-2\ve}<1-\ve<1+\ve<e^{2\ve}$, therefore $\tfrac{d(\vt_{-1}[\wh x_n],\mathfs S)}{d(\vt[\wh x_{n-1}],\mathfs S)}=e^{\pm 2\ve}$.
Similarly $\tfrac{d(\vt_{-1}[\wh y_n],\mathfs S)}{d(\vt[\wh y_{n-1}],\mathfs S)}=e^{\pm 2\ve}$,
hence (\ref{equation-distances-1}) above implies that
\begin{equation}\label{equation-distances-2}
\frac{d(\vt_{-1}[\wh x_n],\mathfs S)}{d(\vt_{-1}[\wh y_n],\mathfs S)}=e^{\pm 7\ve}.
\end{equation}
It remains to take $i=1$. By (E2.1), 
$d(\vt_1[\wh x_n],\vt[\wh x_{n+1}])<p_n<\ve d(\vt_1[\wh x_n],\mathfs S)$
and so $d(\vt[\wh x_{n+1}],\mathfs S)=(1\pm\ve)d(\vt_1[\wh x_n],\mathfs S)$.
Now proceed as in the case $i=-1$ to get
\begin{equation}\label{equation-distances-3}
\frac{d(\vt_1[\wh x_n],\mathfs S)}{d(\vt_1[\wh y_n],\mathfs S)}=e^{\pm 7\ve}.
\end{equation}
It is clear that (\ref{estimates-distances}) follows from
(\ref{equation-distances-1}), (\ref{equation-distances-2}), (\ref{equation-distances-3}).

\medskip
The second consequence of part (1) is that $\vt[\wh x_n],\vt[\wh y_n]\in D_{\vt[\wh x_n]}\cap D_{\vt[\wh y_n]}$.
To see this, note that (\ref{equation-distances-1}) and (\ref{equation-distances-3}) imply
$\tfrac{1}{2}\leq\tfrac{\min\{d(\vt[\wh x_n],\mathfs S)^a,d(\vt_1[\wh x_n],\mathfs S)^a\}}{\min\{d(\vt[\wh y_n],\mathfs S)^a,d(\vt_1[\wh y_n],\mathfs S)^a\}}\leq 2$ and so
\begin{align*}
&\ d(\vt[\wh x_n],\vt[\wh y_n])<\ve[\min\{d(\vt[\wh x_n],\mathfs S)^a,d(\vt_1[\wh x_n],\mathfs S)^a\}\\
&\hspace{2.7cm}+\min\{d(\vt[\wh y_n],\mathfs S)^a,d(\vt_1[\wh y_n],\mathfs S)^a\}]\\
&<3\ve\min\{d(\vt[\wh y_n],\mathfs S)^a,d(\vt_1[\wh y_n],\mathfs S)^a\}<\mathfrak r(\vt[\wh y_n])
\end{align*}
and thus $\vt[\wh x_n]\in D_{\vt[\wh y_n]}$.
Similarly $\vt[\wh y_n]\in D_{\vt[\wh x_n]}$.
As a consequence, we can apply assumptions (A1)--(A3) with respect to either $\vt[\wh x_n]$
or $\vt[\wh y_n]$.

\subsection{Control of $u(\wh x_n)$}
We now make use of the hyperbolicity of $\wh f$ to show that $u$ improves along
an $\ve$--gpo.

\begin{proposition}
The following holds for all $\ve>0$ small enough.
If $\{\Psi_{\wh x_n}^{p_n}\}_{n\in\Z}$, $\{\Psi_{\wh y_n}^{q_n}\}_{n\in\Z}\in\Sigma^\#$ satisfy
$\pi[\{\Psi_{\wh x_n}^{p_n}\}_{n\in\Z}]=\pi[\{\Psi_{\wh y_n}^{q_n}\}_{n\in\Z}]$ then
$\tfrac{u(\wh x_n)}{u(\wh y_n)}=e^{\pm 2\sqrt{\ve}}$, $\forall n\in\Z$.
\end{proposition}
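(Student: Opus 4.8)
The plan is to adapt the scheme of \cite[\S 6]{Sarig-JAMS} (see also \cite[\S 6]{Lima-Matheus}), the new feature being that the parameter $u$ satisfies the one--step recursion of Lemma \ref{Lemma-linear-reduction}, which lets us propagate the comparison between $u(\wh x_n)$ and $u(\wh y_n)$ down the backward orbit through the chart centers. Write $\wh x:=\pi[\{\Psi_{\wh x_n}^{p_n}\}_{n\in\Z}]=\pi[\{\Psi_{\wh y_n}^{q_n}\}_{n\in\Z}]$; since every quantity involved is equivariant under $\wh f$ and $\sigma$ (recall $\pi\circ\sigma=\wh f\circ\pi$), it is enough to treat $n=0$. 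First I would record the consequences of part (1) of Theorem \ref{Thm-inverse}: combining (E1) with part (1) applied at index $m-1$ gives $d(\vt_{-1}[\wh x_m],\vt_{-1}[\wh y_m])\leq p_{m-1}^8+2(p_{m-1}+q_{m-1})+q_{m-1}^8\leq 3(p_{m-1}+q_{m-1})$, and, as in the discussion preceding this proposition, these points lie in a common ball where (A1)--(A3) apply; hence by (A2)--(A3) and the trivial bounds on $Q_\ve$ one gets $\tfrac{|df_{\vt_{-1}[\wh x_m]}|}{|df_{\vt_{-1}[\wh y_m]}|}=e^{\pm\eta_m}$ with $\eta_m\leq C\sqrt\ve\,(p_{m-1}+q_{m-1})^{\beta/4}$, $C$ depending only on $\mathfrak K,a,\chi$.

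Next, by Lemma \ref{Lemma-linear-reduction} one has $u(\wh x_m)^2=1+e^{2\chi}|df_{\vt_{-1}[\wh x_m]}|^{-2}u(\wh f^{-1}(\wh x_m))^2$, and since $\Psi_{\wh f^{-1}(\wh x_m)}^{p_{m-1}}\overset{\ve}{\approx}\Psi_{\wh x_{m-1}}^{p_{m-1}}$ by (E1), Proposition \ref{Lemma-overlap}(1) gives $\tfrac{u(\wh f^{-1}(\wh x_m))}{u(\wh x_{m-1})}=e^{\pm p_{m-1}^6}$, with the analogous statements for $\wh y,q$. Iterating $N$ times yields
$$
u(\wh x_0)^2=\sum_{k=0}^{N-1}e^{2k\chi}\Big(\prod_{i=0}^{k-1}|df_{\vt_{-1}[\wh x_{-i}]}|^{-2}e^{\pm 2p_{-i-1}^6}\Big)+e^{2N\chi}\Big(\prod_{i=0}^{N-1}|df_{\vt_{-1}[\wh x_{-i}]}|^{-2}e^{\pm 2p_{-i-1}^6}\Big)u(\wh x_{-N})^2,
$$
and the same identity for $u(\wh y_0)^2$. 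Because $\{\Psi_{\wh x_n}^{p_n}\}_{n\in\Z}\in\Sigma^\#$ there are arbitrarily negative $N$ along which $\wh x_{-N}$ equals a fixed vertex of ${\rm NUE}_\chi$; along such $N$ the factor $u(\wh x_{-N})\lesssim p_{-N}^{-\beta/24}$ stays bounded, while the prefactor $e^{2N\chi}\prod_i|df_{\vt_{-1}[\wh x_{-i}]}|^{-2}$ decays geometrically, since $\prod_i|df_{\vt_{-1}[\wh x_{-i}]}|$ is comparable (up to rate $o_\ve(1)$, using that consecutive (E1)--overlaps force $\vt_0[\wh f^j(\wh x_{-N})]$ to stay close to $\vt_0[\wh x_{-N+j}]$) to $|\wh{df}^{(N)}_{\wh x_{-N}}|$, which grows at rate $>\chi$ by definition of ${\rm NUE}_\chi$. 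Thus the remainder $\to 0$ and both quantities are genuine infinite series; temperedness (Lemma \ref{Lemma-temperedness}) is what makes the decay beat the growth of $u(\wh x_{-N})$.

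Finally I would compare the two series term by term. The $k=0$ terms are both equal to $1$; for $k\geq 1$, the bound on $\eta_{-i}$ from the first paragraph and the overlap errors make the two $k$--th terms agree up to the factor $\exp\!\big(\pm 2\textstyle\sum_{i=0}^{k-1}\eta_{-i}\pm 2\sum_{i=0}^{k-1}(p_{-i-1}^6+q_{-i-1}^6)\big)$. Fixing a threshold $K$, depending only on $\chi$ and the Lyapunov gap, beyond which $e^{2k\chi}|\wh{df}^{(-k)}_{\wh x_0}|^2$ is negligible, the partial sums occurring in this factor are $O(K\sqrt\ve)$ for $k\leq K$, so the factor is $e^{\pm\sqrt\ve}$ in the relevant range; summing and using $u(\wh x_0)^2,u(\wh y_0)^2\geq 1$ gives $\tfrac{u(\wh x_0)^2}{u(\wh y_0)^2}=e^{\pm\sqrt\ve}$, hence $\tfrac{u(\wh x_0)}{u(\wh y_0)}=e^{\pm\sqrt\ve/2}$, which is stronger than claimed.

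The main obstacle is the bookkeeping in the last two paragraphs: the errors $\eta_m$ and $p_{m-1}^6$ enter at the scale of the chart sizes $p_{m-1},q_{m-1}$, which do \emph{not} tend to $0$ along a recurrent $\ve$--gpo, so a naive summation diverges. One must balance them against the geometric decay furnished by the non-uniform expansion, which is precisely why the hypotheses $\un v,\un w\in\Sigma^\#$, the temperedness of $Q_\ve$, and membership in ${\rm NUE}_\chi$ are all needed and cannot be relaxed to arbitrary $\ve$--gpo's.
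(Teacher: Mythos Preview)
Your route differs substantially from the paper's and, as written, has a genuine gap.

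The paper does \emph{not} compare $u(\wh x_0)$ to $u(\wh y_0)$ directly. Instead it introduces the shadowed point $\wh x$ and proves an \emph{improvement lemma}: if $\tfrac{u(\wh f^n(\wh x))}{u(\wh x_n)}=e^{\pm\xi}$ with $\xi\geq\sqrt\ve$, then $\tfrac{u(\wh f^{n+1}(\wh x))}{u(\wh x_{n+1})}=e^{\pm(\xi-Q_\ve(\wh x_n)^{\beta/4})}$. The improvement comes from the ``$1+$'' in the recursion $u(\wh f(\cdot))^2=1+c\,u(\cdot)^2$: the constant term damps the ratio toward $1$ at every step, independently of any Lyapunov gap. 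One then initializes (Claim~2, using relevance of $\mathfs A$ and stability of $u$ along unstable sets, Proposition~\ref{Prop-stable-manifolds}(3)(b)) at some $\xi<\infty$ along a subsequence $n_k\to-\infty$, and the improvement drives the ratio to $e^{\pm\sqrt\ve}$ for every $n$. Doing this for both $\un v$ and $\un w$ yields the claimed $e^{\pm 2\sqrt\ve}$.

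Your argument tries to expand $u(\wh x_0)^2$ and $u(\wh y_0)^2$ as series through chart centers and compare term by term. Two problems:
\begin{enumerate}
\item Your reason for the remainder vanishing is wrong: (E1) controls the \emph{backward} iterate of a chart center, so the \emph{forward} orbit of $\wh x_{-N}$ need not track the chart centers for $N$ steps (forward expansion blows up the errors), and hence $\prod_i|df_{\vt_{-1}[\wh x_{-i}]}|$ is not comparable to $|\wh{df}^{(N)}_{\wh x_{-N}}|$. (The remainder does in fact vanish, but for a different reason: the series $\sum_k P_k$ is a convergent series of positive terms, so $P_k\to 0$, and along the recurrent subsequence $u(\wh x_{-N})$ is constant.)
\item More seriously, the termwise ratio is $\exp\!\big(\pm 2\sum_{i=0}^{k-1}(\eta_{-i}+p_{-i-1}^6+q_{-i-1}^6)\big)$, and along a \emph{recurrent} $\ve$--gpo the summands do not tend to zero, so the exponent grows like $Ck\,\ve^{1/24}$. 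Your proposed fix, truncating at a threshold $K$ beyond which the series tail is negligible, cannot be made uniform: the tail behaviour depends on the Lyapunov gap of the particular chart center $\wh x_0$, and there is no uniform positive lower bound on that gap over all of $\mathfs A$. Since $\ve$ must be chosen once and for all, the balancing you sketch (``geometric decay beats linear error'') gives at best a bound of the form $1+O(\ve^{1/24}/\delta)$ with $\delta$ the gap, which does not yield $e^{\pm\sqrt\ve}$ uniformly.
\end{enumerate}
The improvement lemma is precisely the mechanism that circumvents this accumulation: it contracts the ratio at \emph{every} step by a definite amount $Q_\ve(\wh x_n)^{\beta/4}$, so the infinitely many recurrences to a fixed vertex force the ratio down to $\sqrt\ve$, with no reference to any Lyapunov gap.
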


\begin{proof}
When $M$ is compact and $f$ is a $C^{1+\beta}$ diffeomorphism,
this follows from Lemma 7.2 and Proposition 7.3 of \cite{Sarig-JAMS}. We employ similar methods.
Let $\un v=\{\Psi_{\wh x_n}^{p_n}\}_{n\in\Z}$, $\un w=\{\Psi_{\wh y_n}^{q_n}\}_{n\in\Z}$,
and $\pi[\un v]=\pi[\un w]=\wh x$.

\medskip 
\noindent
{\sc Claim 1 (Improvement lemma):} The following holds for all $\ve>0$ small enough.
For $\xi\geq\sqrt{\ve}$, if $\tfrac{u(\wh f^n(\wh x))}{u(\wh x_n)}=e^{\pm\xi}$
then $\tfrac{u(\wh f^{n+1}(\wh x))}{u(\wh x_{n+1})}=e^{\pm(\xi-Q_\ve(\wh x_{n})^{\beta/4})}$.

\medskip
Note that the ratio improves. 

\begin{proof}[Proof of Claim $1$.]
It is enough to prove the claim for $n=0$, so assume
$\tfrac{u(\wh x)}{u(\wh x_0)}=e^{\pm\xi}$ with $\xi\geq\sqrt{\ve}$.
We have $\tfrac{u(\wh f(\wh x))}{u(\wh x_1)}=
\tfrac{u(\wh f(\wh x))}{u(\wh f(\wh x_0))}\cdot
\tfrac{u(\wh f(\wh x_0))}{u(\wh x_1)}$.
By (E2.2) we have  $\tfrac{u(\wh f(\wh x_0))}{u(\wh x_1)}=e^{\pm p_0}$,
and since $p_0\ll Q_\ve(\wh x_0)^{\beta/4}$ it follows that
$\tfrac{u(\wh f(\wh x_0))}{u(\wh x_1)}=e^{\pm Q_\ve(\wh x_0)^{\beta/4}}$.
Thus it is enough to show that
$\tfrac{u(\wh f(\wh x))}{u(\wh f(\wh x_0))}=e^{\pm(\xi-2Q_\ve(\wh x_0)^{\beta/4})}$.
We show that $\tfrac{u(\wh f(\wh x))}{u(\wh f(\wh x_0))}\leq e^{\xi-2Q_\ve(\wh x_0)^{\beta/4}}$
(the other side is proved similarly). By the proof of Lemma \ref{Lemma-linear-reduction},
\begin{align*}
&\frac{u(\wh f(\wh x))^2}{u(\wh f(\wh x_0))^2}=
\frac{1+e^{2\chi}|df_{\vt[\wh x]}|^{-2}u(\wh x)^2}{1+e^{2\chi}|df_{\vt[\wh x_0]}|^{-2}u(\wh x_0)^2}\leq 
\frac{1+e^{2\xi+2\chi}|df_{\vt[\wh x]}|^{-2}u(\wh x_0)^2}{1+e^{2\chi}|df_{\vt[\wh x_0]}|^{-2}u(\wh x_0)^2}\\
&\leq
\underbrace{\left(\tfrac{1+e^{2\xi+2\chi}|df_{\vt[\wh x_0]}|^{-2}u(\wh x_0)^2}{1+e^{2\chi}|df_{\vt[\wh x_0]}|^{-2}u(\wh x_0)^2}\right)}_{= \text{ I}}\ 
\underbrace{{\rm exp}\left(2\left|\log|df_{\vt[\wh x]}|-\log|df_{\vt[\wh x_0]}|\right|\right)}_{=\text{ II}}.
\end{align*}
Note that I can be written as:
\begin{align*}
\ \text{I}=\tfrac{1+e^{2\xi+2\chi}|df_{\vt[\wh x_0]}|^{-2}u(\wh x_0)^2}{1+e^{2\chi}
|df_{\vt[\wh x_0]}|^{-2}u(\wh x_0)^2}=e^{2\xi}-\tfrac{e^{2\xi}-1}{u(\wh f(\wh x_0))^2}
=e^{2\xi}\left[1-\tfrac{1-e^{-2\xi}}{u(\wh f(\wh x_0))^2}\right].
\end{align*}
Using that $\xi\geq\sqrt{\ve}$ and that $u(\wh f(\wh x_0))<e^{\chi+1}\rho(\wh x_0)^{-a}u(\wh x_0)$
(see the proof of discreteness of Theorem \ref{Thm-coarse-graining}), it follows that for $\ve>0$ small enough it holds:
\begin{align*}
&\tfrac{1-e^{-2\xi}}{u(\wh f(\wh x_0))^2}>\ve^{\frac{1}{2}}e^{-2\chi-2}\rho(\wh x_0)^{2a}u(\wh x_0)^{-2}
>\ve^{\frac{1}{2}}e^{-2\chi-2}\left[\ve^{-\frac{1}{12}}Q_\ve(\wh x_0)^{\frac{\beta}{36}}\right]
\left[\ve^{-\frac{1}{4}}Q_\ve(\wh x_0)^{\frac{\beta}{12}}\right]\\
&=\ve^{\frac{1}{6}}e^{-2\chi-2}Q_\ve(\wh x_0)^{-\frac{5\beta}{36}} Q_\ve(\wh x_0)^{\frac{\beta}{4}}
>\ve^{\frac{1}{6}}e^{-2\chi-2}\ve^{-\frac{5}{12}} Q_\ve(\wh x_0)^{\frac{\beta}{4}}
=\ve^{-\frac{1}{4}}e^{-2\chi-2}Q_\ve(\wh x_0)^{\frac{\beta}{4}}\\
&>5Q_\ve(\wh x_0)^{\frac{\beta}{4}}.
\end{align*}
Using that $1-t<e^{-t}$ for $t\in\R$, we obtain that $\text{I}<e^{2\xi-5Q_\ve(\wh x_0)^{\beta/4}}$.

\medskip
We now estimate II. By (A2)--(A3), if $x,y\in M$ with $y\in D_x$ then
$$
\left|\frac{|df_x|}{|df_y|}-1\right|\leq |df_y|^{-1}|df_x-df_y|
\leq\mathfrak K d(x,\mathfs S)^{-a}|x-y|^\beta.
$$
Since $\log t\leq t-1$ for $t>1$, we get
$|\log|df_x|-\log|df_y||\leq \mathfrak K d(x,\mathfs S)^{-a}|x-y|^\beta$
whenever $y\in D_x$, hence for $\ve>0$ small enough it holds:
\begin{align*}
&\ 2\left|\log|df_{\vt[\wh x]}|-\log|df_{\vt[\wh x_0]}|\right|
\leq 2\mathfrak K d(\vt[\wh x_0],\mathfs S)^{-a}d(\vt[\wh x],\vt[\wh x_0])^\beta\\
&\leq 2\mathfrak K\rho(\wh x_0)^{-a} Q_\ve(\wh x_0)^\beta
<2\mathfrak K \ve^{\frac{1}{24}}Q_\ve(\wh x_0)^{\frac{71\beta}{72}}<Q_\ve(\wh x_0)^{\frac{\beta}{4}}.
\end{align*}
The estimates of I and II above imply that
$\tfrac{u(\wh f(\wh x))}{u(\wh f(\wh x_0))}\leq e^{\xi-2Q_\ve(\wh x_0)^{\beta/4}}$, as claimed.
\end{proof}

\begin{remark}\label{Rmk-after-improvement-lemma}
The proof above also works to show that if $\wh f(\wh x)\in V^u[\{\Psi_{\wh x_n}^{p_n}\}_{n\leq 1}]$
and if $\tfrac{u(\wh x)}{u(\wh x_0)}=e^{\pm\xi}$ for $\xi\geq\sqrt{\ve}$,
then $\tfrac{u(\wh f(\wh x))}{u(\wh x_1)}=e^{\pm(\xi-Q_\ve(\wh x_0)^{\beta/4})}$.
In particular, the ratio does not get worse.
\end{remark}

At this point, we do not yet know that $u(\wh x)<\infty$. The next claim gives this.

\medskip
\noindent
{\sc Claim 2:} $\exists\xi\geq\sqrt{\ve}$ and
a sequence $n_k\to-\infty$ s.t. $\tfrac{u(\wh f^{n_k}(\wh x))}{u(\wh x_{n_k})}=e^{\pm\xi}$ for all $k$.

\medskip
In particular $u(\wh f^{n_k}(\wh x))<\infty$ and so, by the proof of Lemma \ref{Lemma-linear-reduction},
$u(\wh x)<\infty$.

\begin{proof}[Proof of Claim $2$.]
The statement and its proof are very similar to Claim 1 in the proof of \cite[Prop. 7.3]{Sarig-JAMS}.
Since $\un v\in\Sigma^\#$, there is a sequence $n_k\to-\infty$
s.t. $v_{n_k}=v=\Psi_{\wh y}^q$ for all $k$. Thus it is enough to show that $u(\wh f^{n_k}(\wh x))$
is uniformly bounded. Since $v$ is relevant, Theorem \ref{Thm-coarse-graining}(3)
implies that there is $\un w=\{w_n\}_{n\in\Z}\in\Sigma$ with $w_0=v$ and $\pi[\un w]=\wh z\in{\rm NUE}_\chi^*$.
In particular, $u(\wh z)<\infty$. Let $V^u:=V^u[\{w_n\}_{n\leq 0}]$, then by Proposition \ref{Prop-stable-manifolds}(3)(b)
we have $u(\wh z')\leq \exp{}[\ve^{3/4}]u(\wh z)$ for all $\wh z'\in V^u$.
Define $\xi_0>0$ by
$e^{\xi_0}:=\max\left\{\tfrac{u(\wh y)}{u(\wh z)},\tfrac{u(\wh z)}{u(\wh y)},e^{\sqrt{\ve}}\right\}$
and $\xi:=\xi_0+\ve^{3/4}$.
Below we will show that $u(\wh f^{n_k}(\wh x))\leq e^{\xi}u(\wh y)$ for all $k$.

\medskip
Fix $k$. For $\ell<k$ define $\wh z_\ell\in\wh M$ by
$\wh f^{n_\ell}(\wh z_\ell):=V^u\cap V^s[\{v_m\}_{m\geq n_\ell}]$.
This definition makes sense because $v_{n_\ell}=v$ and $V^u$ is an unstable set at $v$.
It is clear that $\wh z_\ell\to\wh x$. Since $\wh z,\wh f^{n_\ell}(\wh z_\ell)\in V^u$, we have that 
$\tfrac{u(\wh f^{n_\ell}(\wh z_\ell))}{u(\wh y)}=\tfrac{u(\wh f^{n_\ell}(\wh z_\ell))}{u(\wh z)}\cdot\tfrac{u(\wh z)}{u(\wh y)}=e^{\pm\xi}$,
hence by Remark \ref{Rmk-after-improvement-lemma} above we have
$\tfrac{u(\wh f^{n_k}(\wh z_\ell))}{u(\wh y)}=e^{\pm\xi}$. For each fixed $N>0$
we have $\wh{df}_{\wh f^{n_k}(\wh z_\ell)}^{(-n)}\xrightarrow[\ell\to\infty]{} \wh{df}_{\wh f^{n_k}(\wh x)}^{(-n)}$
for all $n=0,\ldots,N$, therefore
$$
\sum_{n=0}^N e^{2n\chi}|\wh{df}_{\wh f^{n_k}(\wh x)}^{(-n)}|^2\leq
\lim_{\ell\to\infty}\sum_{n=0}^N e^{2n\chi}|\wh{df}_{\wh f^{n_k}(\wh z_\ell)}^{(-n)}|^2\leq
\lim_{\ell\to\infty} u(\wh f^{n_k}(\wh z_\ell))^2\leq e^{2\xi} u(\wh y)^2
$$
and so $u(\wh f^{n_k}(\wh x))\leq e^{\xi}u(\wh y)$. This completes the proof of the claim.
\end{proof}

Now we complete the proof of the proposition. By Claim 2, we can apply Claim 1
along $\un v$ and the orbit of $\wh x$ for some $\xi\geq\sqrt{\ve}$: if $v_m=v$ for infinitely many $m<n$,
then the ratio improves a definite amount at each of these indices.
The conclusion is that $\tfrac{u(\wh f^n(\wh x))}{u(\wh x_n)}=e^{\pm\sqrt{\ve}}$, and
by the same reason $\tfrac{u(\wh f^n(\wh x))}{u(\wh y_n)}=e^{\pm\sqrt{\ve}}$.
Therefore $\tfrac{u(\wh x_n)}{u(\wh y_n)}=e^{\pm2\sqrt{\ve}}$.
\end{proof}

\subsection{Control of $Q_\ve(\wh x_n)$ and $p_n$}

Recall that $Q_\ve(\wh x):=\max\{q\in I_\ve:q\leq \widetilde Q_\ve(\wh x)\}$ where
$$
\widetilde Q_\ve(\wh x)=\ve^{3/\beta}
\min\left\{u(\wh x)^{-24/\beta},u(\wh f^{-1}(\wh x))^{-12/\beta}\rho(\wh x)^{72a/\beta}\right\},
$$
so we first control $\widetilde Q_\ve(\wh x_n)$.
By part (2), $\tfrac{u(\wh x_n)}{u(\wh y_n)}=e^{\pm2\sqrt{\ve}}$ for all $n\in\Z$.
Since $\Psi_{\wh f^{-1}(\wh x_n)}^{p_{n-1}}\overset{\ve}{\approx}\Psi_{\wh x_{n-1}}^{p_{n-1}}$,
Proposition \ref{Lemma-overlap}(1) implies that
$\tfrac{u(\wh f^{-1}(\wh x_n))}{u(\wh x_{n-1})}=e^{\pm\sqrt{\ve}}$,
and similarly $\tfrac{u(\wh f^{-1}(\wh y_n))}{u(\wh y_{n-1})}=e^{\pm\sqrt{\ve}}$.
Hence
$$
\tfrac{u(\wh f^{-1}(\wh x_n))}{u(\wh f^{-1}(\wh y_n))}=
\tfrac{u(\wh f^{-1}(\wh x_n))}{u(\wh x_{n-1})}\cdot \tfrac{u(\wh x_{n-1})}{u(\wh y_{n-1})}
\cdot \tfrac{u(\wh y_{n-1})}{u(\wh f^{-1}(\wh y_n))}=e^{\pm4\sqrt{\ve}}.
$$

\medskip
Now we estimate the ratio $\tfrac{\rho(\wh x_n)}{\rho(\wh y_n)}$.
By (\ref{equation-distances-1}), (\ref{equation-distances-2}), (\ref{equation-distances-3})
we have $\tfrac{\rho(\wh x_n)}{\rho(\wh y_n)}=e^{\pm 7\ve}$, hence
$\tfrac{\rho(\wh x_n)^{72a}}{\rho(\wh y_n)^{72a}}=e^{\pm\sqrt{\ve}}$.
The conclusion is that
$\tfrac{\widetilde Q_\ve(\wh x_n)}{\widetilde Q_\ve(\wh y_n)}={\rm exp}[\pm(\tfrac{49}{\beta}\sqrt{\ve})]$,
thus $\tfrac{Q_\ve(\wh x_n)}{Q_\ve(\wh y_n)}={\rm exp}[\pm(\frac{2}{3}\ve+\tfrac{49}{\beta}\sqrt{\ve})]$.
For $\ve>0$ small enough we get $\tfrac{Q_\ve(\wh x_n)}{Q_\ve(\wh y_n)}=e^{\pm\sqrt[3]{\ve}}$.

\medskip
We now prove part (4). We use the lemma below, whose proof is the same as in \cite[Prop. 8.3]{Sarig-JAMS}.

\begin{lemma}\label{Lemma-maximality}
If $\{\Psi_{\wh x_n}^{p_n}\}_{n\in\Z}\in\Sigma^\#$ then 
$p_n=\delta_\ve Q_\ve(\wh x_n)$ for infinitely many $n<0$.
\end{lemma}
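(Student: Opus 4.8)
The plan is a short contradiction argument built entirely on the greedy recursion (E2.3) and the recurrence encoded in $\Sigma^\#$. Suppose, for contradiction, that $p_n=\delta_\ve Q_\ve(\wh x_n)$ for only finitely many $n<0$. Since every vertex of $\Sigma$ lies in $\mathfs A$, condition (CG2) gives $p_n\le\delta_\ve Q_\ve(\wh x_n)$ for all $n$; hence there is $N<0$ such that $p_n<\delta_\ve Q_\ve(\wh x_n)$ \emph{strictly} for every $n\le N$. For each such $n$, apply (E2.3) to the edge $v_{n-1}\overset{\ve}{\leftarrow}v_n$: it reads $p_n=\min\{e^\ve p_{n-1},\,\delta_\ve Q_\ve(\wh x_n)\}$, and because $p_n$ is strictly smaller than the second term in the minimum, we must have $p_n=e^\ve p_{n-1}$.

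Iterating this identity backwards from $N$ yields $p_{N-k}=e^{-k\ve}p_N$ for all $k\ge 0$, so $p_{N-k}\to 0$ as $k\to\infty$. On the other hand, $\un v=\{\Psi_{\wh x_n}^{p_n}\}_{n\in\Z}\in\Sigma^\#$ provides a single vertex $w=\Psi_{\wh z}^r\in\mathfs A$ with $v_n=w$ — in particular $p_n=r>0$ — for infinitely many $n<0$. Choosing such an $n\le N$, we get $r=p_n=e^{-(N-n)\ve}p_N$, and letting $n\to-\infty$ along this subsequence forces $r=0$, a contradiction. Therefore $p_n=\delta_\ve Q_\ve(\wh x_n)$ for infinitely many $n<0$.

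I do not expect a genuine obstacle here; the only point requiring care is the \emph{direction} of the recursion (E2.3), which determines $p_{n+1}$ from $p_n$, so that staying in the strict regime $p_n<\delta_\ve Q_\ve(\wh x_n)$ forces $p_n$ to decay geometrically \emph{into the past} (not the future). The recurrence built into $\Sigma^\#$ — a fixed vertex, hence a fixed positive value of $p$, recurring infinitely often as $n\to-\infty$ — is precisely what prohibits this decay, exactly as in \cite[Prop.~8.3]{Sarig-JAMS}.
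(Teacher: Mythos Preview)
Your proof is correct and is exactly the argument the paper intends: the paper gives no independent proof but defers to \cite[Prop.~8.3]{Sarig-JAMS}, and you have faithfully reproduced that argument, correctly tracking the direction of the recursion (E2.3) so that the strict regime forces $p_{N-k}=e^{-k\ve}p_N\to 0$, in contradiction with the recurrence from $\Sigma^\#$. One minor remark: you need not invoke (CG2) for the inequality $p_n\le\delta_\ve Q_\ve(\wh x_n)$, since (E2.3) itself already gives it for every $n\in\Z$.
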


By symmetry, it is enough to prove that $p_n\geq e^{-\sqrt[3]{\ve}}q_n$ for all $n\in\Z$. We have:
\begin{enumerate}[$\circ$]
\item If $p_n=\delta_\ve Q_\ve(\wh x_n)$ then part (3) gives
$p_n=\delta_\ve Q_\ve(\wh x_n)\geq e^{-\sqrt[3]{\ve}}\delta_\ve Q_\ve(\wh y_n)
\geq e^{-\sqrt[3]{\ve}}q_n$.
\item If $p_n\geq e^{-\sqrt[3]{\ve}}q_n$ then (E2.3) and part (3) imply that
$$p_{n+1}=\min\{e^\ve p_n,\delta_\ve Q_\ve(\wh x_{n+1})\}\geq
e^{-\sqrt[3]{\ve}}\min\{e^\ve q_n,\delta_\ve Q_\ve(\wh y_{n+1})\}=e^{-\sqrt[3]{\ve}}q_{n+1}.$$
\end{enumerate}
By Lemma \ref{Lemma-maximality}, we conclude that $p_n\geq e^{-\sqrt[3]{\ve}}q_n$ for all $n\in\Z$.

\subsection{Control of $\Psi_{\wh y_n}^{-1}\circ\Psi_{\wh x_n}$}

We can assume that $n=0$ and write $\Psi_{\wh x_0}^{p_0}=\Psi_{\wh x}^p$,
$\Psi_{\wh y_0}^{q_0}=\Psi_{\wh y}^q$. Firstly note that, since $1-2t<e^{-t}<e^t<1+2t$
for small $t>0$, part (2) implies $\left|\tfrac{u(\wh y)}{u(\wh x)}-1\right|< 4\sqrt{\ve}$.
%
%
As in the proof of Proposition \ref{Lemma-overlap}(3), we have
$(\Psi_{\wh y}^{-1}\circ \Psi_{\wh x}-{\rm Id})(t)=\Delta(t)+\delta$, where
$\Delta(t):=\left(\tfrac{u(\wh y)}{u(\wh x)}-1\right)t$. Thus:
\begin{enumerate}[$\circ$]
\item $\Delta(0)=0$ and $\|d\Delta\|_0=\left|\tfrac{u(\wh y)}{u(\wh x)}-1\right|<4\sqrt{\ve}$.
\item There are $t\in R[p],t'\in R[q]$ s.t.
$\Psi_{\wh x}(t)=\Psi_{\wh y}(t')=\vt[\wh x]$, hence
$t'=\Psi_{\wh y}^{-1}\circ \Psi_{\wh x}(t)=t+\Delta(t)+\delta$. If $\ve>0$ is small enough, part (4) implies:
\begin{align*}
&|\delta|\leq |t|+\|d\Delta\|_0|t|+|t'|\leq (1+4\sqrt{\ve})p+q\leq [(1+4\sqrt{\ve})e^{\sqrt[3]{\ve}}+1]q<3q.
\end{align*}
\end{enumerate}
This completes the proof of part (5), and hence of Theorem \ref{Thm-inverse}.

\section{Symbolic dynamics}\label{Section-symbolic-dynamics}

\subsection{A countable Markov partition}

Remind that $(\Sigma,\sigma)$ is the TMS constructed from Theorem \ref{Thm-coarse-graining},
and $\pi:\Sigma\to {\wh M}$ is the map defined in the end of section \ref{Section-coarse-graining}.
We now employ Theorem \ref{Thm-inverse} to build a cover of ${\rm NUE}_\chi^\#$
that is locally finite and satisfies a (symbolic) Markov property. We will use the
constructions of \cite{Sarig-JAMS,Lima-Sarig,Lima-Matheus} to build a Markov partition for $\wh f$,
paying attention to the following facts:
\begin{enumerate}[$\circ$]
\item Our stable and unstable sets are not curves, but they do have good descriptions in
terms of the coordinates of $\wh x$ (Lemma \ref{Lemma-stable-sets}), and they do satisfy
properties analogous to their smooth versions (Proposition \ref{Prop-stable-manifolds}).
\item Most of the methods of \cite{Sarig-JAMS,Lima-Sarig,Lima-Matheus} used to
construct the Markov partition are abstract and rely on the properties of stable and unstable sets
that will be stated in this section.
\end{enumerate}

\medskip
\noindent
{\sc The Markov cover $\mathfs Z$:} Let $\mathfs Z:=\{Z(v):v\in\mathfs A\}$, where
$$
Z(v):=\{\pi(\un v):\un v\in\Sigma^\#\text{ and }v_0=v\}.
$$

\medskip
In other words, we take the natural partition of $\Sigma^\#$ into
cylinders at the zeroth position and use $\pi$ to induce a cover $\mathfs Z$.
Stable/unstable sets allow us to define ``invariant fibres'' inside each $Z\in\mathfs Z$, as follows.
Let $Z=Z(v)$.

%

\medskip
\noindent
{\sc $s$/$u$--fibres in $\mathfs Z$:} Given $\wh x\in Z$, let
$W^s(\wh x,Z):=V^s[\{\Psi_{\wh x_n}^{100p_n}\}_{n\geq 0}]\cap Z$
be the {\em $s$--fibre} of $\wh x$ in $Z$ for some (any) $\un v=\{\Psi_{\wh x_n}^{p_n}\}_{n\in\Z}\in\Sigma^\#$
s.t. $\pi(\un v)=\wh x$ and $\Psi_{\wh x_0}^{p_0}=v$.
Similarly, let $W^u(\wh x,Z):=V^u[\{\Psi_{\wh x_n}^{100p_n}\}_{n\leq 0}]\cap Z$
be the {\em $u$--fibre} of $\wh x$ in $Z$.

\medskip
We also define $V^s(\wh x,Z):=V^s[\{\Psi_{\wh x_n}^{100p_n}\}_{n\geq 0}]$
and $V^u(\wh x,Z):=V^u[\{\Psi_{\wh x_n}^{100p_n}\}_{n\leq 0}]$. These sets
are well-defined because $\{\Psi_{\wh x_n}^{100p_n}\}_{n\in\Z}$ is a weak $\ve$--gpo.
By Proposition \ref{Prop-stable-manifolds}(4), the definitions of $V^{s/u}(\wh x,Z),W^{s/u}(\wh x,Z)$
do not depend on the choice of $\un v$, and any two $s$--fibres ($u$--fibres) either coincide or are disjoint.
It is important noticing the relation between $W^{s/u}(\wh x,Z)$ and $V^{s/u}(\wh x,Z)$:
\begin{enumerate}[$\circ$]
\item $W^s(\wh x,Z)=V^s(\wh x,Z)=\vt^{-1}[x]$, where $x\in M$ is uniquely defined by
$f^n(x)\in \Psi_{\wh x_n}(R[p_n])$ for all $n\geq 0$, see Lemma \ref{Lemma-stable-sets}(1).
\item $V^u(\wh x,Z)$ is isomorphic to the interval $\Psi_{\wh x_0}(R[100p_0])$, while
$W^u(\wh x,Z)$ is isomorphic to a (usually fractal) subset of $\Psi_{\wh x_0}(R[p_0])$.
\end{enumerate}

\begin{remark}\label{Remark-explanation-weak-gpo}
In \cite{Sarig-JAMS,Lima-Sarig,Lima-Matheus}, $Z(\Psi_{\wh x}^p)\subset \Psi_{\wh x}(R[10^{-2}p])$
and the estimate for $\delta$ in Theorem \ref{Thm-inverse}(5) is $|\delta|<10^{-1}q$. In particular,
defining $V^u(\wh x,Z):=V^u[\{\Psi_{\wh x_n}^{p_n}\}_{n\leq 0}]$ (without taking
larger domains) is enough to guarantee that Smale brackets of points in $Z,Z'$ are well-defined
whenever $Z\cap Z'\neq\emptyset$. In our case, $Z(\Psi_{\wh x}^p)\subset \Psi_{\wh x}(R[p])$
and the estimate for $\delta$ on Theorem \ref{Thm-inverse}(5) is $|\delta|<3q$. That is why we define
$V^u(\wh x,Z)$ in a larger domain. This change is merely technical, since the statements and proofs
about $V^{s/u}(\wh x,Z)$ in our case are essentially the same as in \cite{Sarig-JAMS,Lima-Sarig,Lima-Matheus}.
\end{remark}

Below we collect the main properties of $\mathfs Z$.

\begin{proposition}\label{Prop-Z}
The following holds for all $\ve>0$ small enough.
\begin{enumerate}[{\rm (1)}]
\item {\sc Covering property:} $\mathfs Z$ is a cover of ${\rm NUE}_\chi^\#$.
\item {\sc Local finiteness:} For every $Z\in\mathfs Z$, $\#\{Z'\in\mathfs Z:Z\cap Z'\neq\emptyset\}<\infty$.
\item {\sc Product structure:} For every $Z\in\mathfs Z$ and every $\wh x,\wh y\in Z$, the intersection
$W^s(\wh x,Z)\cap W^u(\wh y,Z)$ consists of a single element of $Z$.
\item {\sc Symbolic Markov property:} If $\wh x=\pi(\un v)$ with $\un v=\{v_n\}_{n\in\Z}\in\Sigma^\#$ then
$$
\wh f(W^s(\wh x,Z(v_0)))\subset W^s(\wh f(\wh x),Z(v_1))\, \text{ and }\,
f^{-1}(W^u(\wh f(\wh x),Z(v_1)))\subset W^u(\wh x,Z(v_0)).
$$
\end{enumerate}
\end{proposition}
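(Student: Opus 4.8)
The plan is to establish the four properties of $\mathfs Z$ by reducing each to the facts already available about weak $\ve$--gpo's and the inverse theorem. I would prove them in the order (1), (3), (4), (2), since local finiteness is the deepest statement and the one that genuinely uses the discreteness of $\mathfs A$ together with Theorem \ref{Thm-inverse}.

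\textbf{Covering property.} Given $\wh x\in{\rm NUE}_\chi^\#$, Theorem \ref{Thm-coarse-graining}(2) produces an $\ve$--gpo $\un v\in\mathfs A^\Z$ shadowing $\wh x$; by the observation at the end of the proof of sufficiency (using that $\wh x\in{\rm NUE}_\chi^\#$ controls both tails of $q_\ve$), the constructed $\un v$ lies in $\Sigma^\#$. By Lemma \ref{Lemma-shadowing} the unique point shadowed by $\un v$ is $\wh x$, so $\wh x=\pi(\un v)\in Z(v_0)$. Hence $\mathfs Z$ covers ${\rm NUE}_\chi^\#$.

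\textbf{Product structure and Markov property.} For (3), fix $Z=Z(v)$ and $\wh x,\wh y\in Z$. Write $\wh x=\pi(\un v)$, $\wh y=\pi(\un w)$ with $\un v,\un w\in\Sigma^\#$, $v_0=w_0=v$. The set $W^s(\wh x,Z)=V^s(\wh x,Z)=\vt^{-1}[x]$ is a stable set at the $\ve$--chart $\Psi_{\wh x_0}^{100p_0}$, while $W^u(\wh y,Z)\subset V^u(\wh y,Z)$ is (the trace on $Z$ of) an unstable set at $\Psi_{\wh y_0}^{100q_0}$. Since $\wh x_0=\wh y_0$ (both equal to the center of $v$), Proposition \ref{Prop-stable-manifolds}(1) gives that $V^s(\wh x,Z)\cap V^u(\wh y,Z)$ is a single point $\wh z\in\wh M$; one then checks $\wh z\in Z$ by exhibiting an element of $\Sigma^\#$ through $v$ mapping to it, concatenating the past of $\un w$ with the future of $\un v$ (legitimate because both pass through the vertex $v$, and recurrence of the two one-sided tails is inherited). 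This is exactly the standard bracket argument, and the enlargement to $100p_n$ in the definition of the fibres — explained in Remark \ref{Remark-explanation-weak-gpo} — is what makes the brackets well-defined given the weaker bound $|\delta|<3q$ in Theorem \ref{Thm-inverse}(5). For (4), invariance is immediate from Proposition \ref{Prop-stable-manifolds}(2): $\wh f(V^s[\{\Psi_{\wh x_n}^{100p_n}\}_{n\geq 0}])\subset V^s[\{\Psi_{\wh x_n}^{100p_n}\}_{n\geq 1}]$, and since $\wh f(\wh x)=\pi(\sigma\un v)$ with $(\sigma\un v)_0=v_1$, intersecting with $Z(v_1)$ gives $\wh f(W^s(\wh x,Z(v_0)))\subset W^s(\wh f(\wh x),Z(v_1))$; the $u$--fibre statement is the same argument applied to $\wh f^{-1}$, using that $f^{-1}$ restricted to the relevant chart range is the inverse branch $g_{\wh x_1}$.

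\textbf{Local finiteness.} This is the main obstacle. Fix $Z=Z(\Psi_{\wh x}^p)\in\mathfs Z$ and suppose $Z\cap Z(\Psi_{\wh y}^q)\neq\emptyset$. Pick a common point $\wh w$, and chains $\un v,\un w\in\Sigma^\#$ with $v_0=\Psi_{\wh x}^p$, $w_0=\Psi_{\wh y}^q$, $\pi(\un v)=\pi(\un w)=\wh w$. Theorem \ref{Thm-inverse} applied at $n=0$ then gives $d(\vt[\wh x],\vt[\wh y])\leq 2\max\{p,q\}$, $\tfrac{Q_\ve(\wh x)}{Q_\ve(\wh y)}=e^{\pm\sqrt[3]{\ve}}$, and $\tfrac{p}{q}=e^{\pm\sqrt[3]{\ve}}$, so $q\geq e^{-\sqrt[3]{\ve}}p$. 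Now I invoke the discreteness of $\mathfs A$ (Theorem \ref{Thm-coarse-graining}(1)): there are only finitely many $\ve$--charts in $\mathfs A$ with parameter exceeding $e^{-\sqrt[3]{\ve}}p$, hence finitely many candidates $\Psi_{\wh y}^q$. Since $\mathfs Z$ is indexed by $\mathfs A$ this bounds $\#\{Z'\in\mathfs Z:Z\cap Z'\neq\emptyset\}$. The only subtlety is that distinct vertices could a priori give the same $Z$-set, but that only helps; and one must make sure the bound on $q$ is genuinely a lower bound uniform over all such $Z'$, which is exactly what parts (3)–(4) of Theorem \ref{Thm-inverse} deliver. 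I expect the write-up to be short precisely because all the analytic work is already in Theorems \ref{Thm-inverse} and \ref{Thm-coarse-graining} and Proposition \ref{Prop-stable-manifolds}; the proposition is an assembly step.
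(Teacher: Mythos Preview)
Your proposal is correct and follows essentially the same route as the paper: part (1) via Theorem \ref{Thm-coarse-graining}(2) and the $\Sigma^\#$ observation, part (2) via Theorem \ref{Thm-inverse}(4) combined with the discreteness in Theorem \ref{Thm-coarse-graining}(1), part (3) via Proposition \ref{Prop-stable-manifolds}(1) together with the product (concatenation) structure of $\Sigma^\#$, and part (4) as in \cite[Prop.~10.9]{Sarig-JAMS}. One small point: in your treatment of (4) you write ``intersecting with $Z(v_1)$ gives\ldots'', but the content of the symbolic Markov property is precisely that $\wh f(W^s(\wh x,Z(v_0)))$ lands in $Z(v_1)$, which is not automatic from Proposition \ref{Prop-stable-manifolds}(2) alone; you need the same concatenation trick you used in (3), splicing the past of a $\Sigma^\#$--coding of $\wh y\in W^s(\wh x,Z(v_0))$ with the future $(v_1,v_2,\ldots)$ of $\un v$ to exhibit $\wh f(\wh y)\in Z(v_1)$.
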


Part (1) follows from Theorem \ref{Thm-coarse-graining}(2),
part (2) follows from Theorem \ref{Thm-inverse}(4) and Theorem \ref{Thm-coarse-graining}(1),
part (3) follows from Proposition \ref{Prop-stable-manifolds}(1) and the product structure of $\Sigma^\#$,
and part (4) is proved as in \cite[Prop. 10.9]{Sarig-JAMS}.
For $\wh x,\wh y\in Z$, let $[\wh x,\wh y]_Z$ denote the intersection element of $W^s(\wh x,Z)$ and $W^u(\wh y,Z)$.

\begin{lemma}
The following holds for all $\ve>0$ small enough.
\begin{enumerate}[{\rm (1)}]
\item {\sc Compatibility:} If $\wh x,\wh y\in Z(v_0)$ and $\wh f(\wh x),\wh f(\wh y)\in Z(v_1)$ with
$v_0\overset{\ve}{\leftarrow} v_1$ then $\wh f([\wh x,\wh y]_{Z(v_0)})=[\wh f(\wh x),\wh f(\wh y)]_{Z(v_1)}$.
\item {\sc Overlapping charts properties:} If $Z=Z(\Psi_{\wh x}^p),Z'=Z(\Psi_{\wh y}^q)\in\mathfs Z$
with $Z\cap Z'\neq \emptyset$ then:
\begin{enumerate}[{\rm (a)}]
\item $Z\subset \Psi_{\wh y}(R[100q])$.
\item If $\wh x\in Z\cap Z'$ then $W^{s/u}(\wh x,Z)\subset V^{s/u}(\wh x,Z')$. 
\item If $\wh z\in Z,\wh w\in Z'$ then $V^s(\wh z,Z)$ and $V^u(\wh w,Z')$ intersect at a unique element. 
\end{enumerate}
\end{enumerate}
\end{lemma}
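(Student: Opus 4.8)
The two assertions of this lemma are the symbolic-dynamics analogues of the facts proved for smooth stable/unstable manifolds in \cite[\S10--11]{Sarig-JAMS}, \cite{Lima-Sarig}, \cite{Lima-Matheus}, and the strategy is to reduce each statement to properties already established in the excerpt: the characterizations of $V^{s/u}$ in Lemma~\ref{Lemma-stable-sets}, the invariance and hyperbolicity in Proposition~\ref{Prop-stable-manifolds}, the inverse theorem (Theorem~\ref{Thm-inverse}), and the covering/local-finiteness/product/Markov properties of $\mathfs Z$ in Proposition~\ref{Prop-Z}. I would treat (1), then (2a), then (2b), then (2c), roughly in that order, since (2c) uses (2a)--(2b).

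\textbf{Part (1), compatibility.} Write $\wh z:=[\wh x,\wh y]_{Z(v_0)}$, so $\wh z\in W^s(\wh x,Z(v_0))\cap W^u(\wh y,Z(v_0))$. By the symbolic Markov property (Proposition~\ref{Prop-Z}(4)) applied to $\wh x$, we have $\wh f(W^s(\wh x,Z(v_0)))\subset W^s(\wh f(\wh x),Z(v_1))$, and applied to $\wh y$ (after noting $\wh f^{-1}(W^u(\wh f(\wh y),Z(v_1)))\subset W^u(\wh y,Z(v_0))$, i.e. $W^u(\wh f(\wh y),Z(v_1))\subset \wh f(W^u(\wh y,Z(v_0)))$) we get that $\wh f(\wh z)$ lies in $W^s(\wh f(\wh x),Z(v_1))$. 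For the unstable side I would use the invariance of unstable sets: $\wh f(\wh z)\in \wh f(W^u(\wh y,Z(v_0)))$, and since $\wh z\in V^u(\wh y,Z(v_0))=V^u[\{\Psi_{\wh x_n}^{100p_n}\}_{n\le 0}]$, Proposition~\ref{Prop-stable-manifolds}(2) gives $\wh f(\wh z)\in V^u[\{\Psi_{\wh x_n}^{100p_n}\}_{n\le 1}]$; intersecting with $Z(v_1)$ and using that $\wh f(\wh y)$ also lies in the same $u$--fibre (again by invariance) shows $\wh f(\wh z)\in W^u(\wh f(\wh y),Z(v_1))$. Since $[\wh f(\wh x),\wh f(\wh y)]_{Z(v_1)}$ is the unique point of $W^s(\wh f(\wh x),Z(v_1))\cap W^u(\wh f(\wh y),Z(v_1))$ (Proposition~\ref{Prop-Z}(3)), we conclude $\wh f(\wh z)=[\wh f(\wh x),\wh f(\wh y)]_{Z(v_1)}$. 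The only subtlety is checking that the $s$--fibre of $\wh f(\wh x)$ and the $u$--fibre of $\wh f(\wh y)$ computed inside $Z(v_1)$ agree with the ones obtained by pushing forward; this is where one uses that the defining $\ve$--gpo's can be chosen through $v_0\overset{\ve}{\leftarrow}v_1$ and that fibres are independent of the chosen gpo (Proposition~\ref{Prop-stable-manifolds}(4)).

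\textbf{Part (2), overlapping charts.} For (a), pick $\wh w\in Z\cap Z'$; write $Z=Z(\Psi_{\wh x}^{p})$ with defining gpo $\un u=\{\Psi_{\wh x_n}^{p_n}\}$ through $\wh w$ and $Z'=Z(\Psi_{\wh y}^{q})$ with defining gpo $\un u'=\{\Psi_{\wh y_n}^{q_n}\}$ through $\wh w$; both have $\pi=\wh w$ and both lie in $\Sigma^\#$, so Theorem~\ref{Thm-inverse} applies at $n=0$: $d(\vt[\wh x],\vt[\wh y])\le 2\max\{p,q\}$, $p/q=e^{\pm\sqrt[3]\ve}$, and the change of coordinates $\Psi_{\wh y}^{-1}\circ\Psi_{\wh x}$ is close to the identity with translation part $<3q$. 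Now for an arbitrary $\wh a\in Z$, $\vt[\wh a]\in\Psi_{\wh x}(R[p])$ (since $Z=Z(\Psi_{\wh x}^p)\subset\Psi_{\wh x}(R[p])$ by construction), and then $\Psi_{\wh y}^{-1}(\vt[\wh a])=\Psi_{\wh y}^{-1}\circ\Psi_{\wh x}(t)=t+\Delta(t)+\delta$ with $|t|\le p$, $\|d\Delta\|_0<4\sqrt\ve$, $|\delta|<3q$, hence $|\Psi_{\wh y}^{-1}(\vt[\wh a])|\le (1+4\sqrt\ve)p+3q<100q$ for $\ve$ small (using $p\le e^{\sqrt[3]\ve}q$); so $\vt[\wh a]\in\Psi_{\wh y}(R[100q])$, i.e. $Z\subset\Psi_{\wh y}(R[100q])$. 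For (b), if $\wh x\in Z\cap Z'$ then the $s$--fibre $W^s(\wh x,Z)$ is defined via $\{\Psi_{\wh x_n}^{100p_n}\}_{n\ge 0}$ and $V^s(\wh x,Z')$ via $\{\Psi_{\wh y_n}^{100q_n}\}_{n\ge 0}$; by Lemma~\ref{Lemma-stable-sets}(1), $V^s(\wh x,Z)=\vt^{-1}[x]=V^s(\wh x,Z')$ for the same $x=\vt[\wh x]$, and $W^s(\wh x,Z)=V^s(\wh x,Z)\cap Z\subset V^s(\wh x,Z')$; for the $u$--fibre one argues analogously using Proposition~\ref{Prop-stable-manifolds}(4) (disjointness/containment of unstable sets) together with the inclusion $\Psi_{\wh x_n}(R[100p_n])\subset\Psi_{\wh y_n}(R[100q_n])$ for all $n\le 0$, which follows from the estimates of Theorem~\ref{Thm-inverse}(4),(5) exactly as in (a) applied coordinatewise, so $V^u(\wh x,Z)\subset V^u(\wh x,Z')$ and intersecting with $Z$ gives (b). Finally (2c): given $\wh z\in Z$, $\wh w\in Z'$, by (2a) $\wh z\in\Psi_{\wh y}(R[100q])$ and (by symmetry, applying (2a) with the roles swapped) $\wh w\in\Psi_{\wh x}(R[100p])$, so both $V^s(\wh z,Z)$ and $V^u(\wh w,Z')$ are genuine stable/unstable sets at overlapping $\ve$--charts; then Proposition~\ref{Prop-stable-manifolds}(1), suitably adapted to stable and unstable sets at \emph{different but overlapping} charts (this adaptation is routine, exactly as in \cite[Prop.~10.5]{Sarig-JAMS}: the unstable set is parametrized by its zeroth coordinate over an interval containing $\vt[\wh z]$, the stable set is $\vt^{-1}[$a point$]$, and they meet at exactly one point because the relevant intervals overlap), yields that the intersection is a single element.

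\textbf{Main obstacle.} The genuinely delicate point is the coordinatewise propagation of the single-index estimates of Theorem~\ref{Thm-inverse} to the enlarged domains $R[100p_n]$ used in the definition of $V^{s/u}(\wh x,Z)$: one must verify that along the whole negative (resp.\ positive) semi-orbit the charts $\Psi_{\wh x_n}^{100p_n}$ and $\Psi_{\wh y_n}^{100q_n}$ still $\ve$--overlap in the strong enough sense that $\Psi_{\wh x_n}(R[100p_n])\subset\Psi_{\wh y_n}(R[100q_n])$ and vice versa, so that the brackets and fibre inclusions are well-defined. This is where the weaker bound $|\delta|<3q$ in Theorem~\ref{Thm-inverse}(5) — rather than $|\delta|<10^{-1}q$ as in \cite{Sarig-JAMS} — forces the use of the factor $100$ and of weak $\ve$--gpo's, as explained in Remark~\ref{Remark-explanation-weak-gpo}; once one checks that $100$ is large enough to absorb $(1+4\sqrt\ve)e^{\sqrt[3]\ve}+3<100$ and the analogous iterated estimates (which contract geometrically by Proposition~\ref{Prop-stable-manifolds}(3)), the rest of the proof is formally identical to the arguments in \cite[\S10]{Sarig-JAMS} and \cite[\S6]{Lima-Matheus}.
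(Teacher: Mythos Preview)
Your treatment of parts (1), (2a), and (2c) is essentially correct and matches the paper's approach (the paper just cites \cite[Lemma~10.7]{Sarig-JAMS} for (1) and gives for (2c) the explicit construction you describe in your parenthetical). The problem is your argument for the unstable side of (2b).

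You try to prove the stronger inclusion $V^u(\wh x,Z)\subset V^u(\wh x,Z')$ via the coordinatewise containment $\Psi_{\wh x_n}(R[100p_n])\subset\Psi_{\wh y_n}(R[100q_n])$ for all $n\le 0$, saying this follows ``exactly as in (a)''. It does not: repeating the estimate in (a) with $|t|\le 100p_n$ yields
\[
|\Psi_{\wh y_n}^{-1}\circ\Psi_{\wh x_n}(t)|\le (1+4\sqrt\ve)\,100p_n+3q_n\le \bigl[100(1+4\sqrt\ve)e^{\sqrt[3]\ve}+3\bigr]q_n,
\]
and the bracket exceeds $100$ for every small $\ve>0$. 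Your appeal to Proposition~\ref{Prop-stable-manifolds}(4) does not help either, since that statement assumes the two unstable sets are centered at charts with the \emph{same} base point, which is not the case here. The ``iterated estimates contract geometrically'' remark in your final paragraph cannot rescue the inclusion at $n=0$, where no contraction has yet occurred.

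The paper instead proves only the needed inclusion $W^u(\wh x,Z)\subset V^u(\wh x,Z')$, and the mechanism is different. Since $W^u(\wh x,Z)\subset Z$, any element has zeroth coordinate in $\Psi_{\wh x_0}(R[p_0])$ (not $R[100p_0]$), and $\Psi_{\wh x_0}(R[p_0])\subset\Psi_{\wh y_0}(R[100q_0])$ \emph{does} hold by the computation in (2a). The essential missing ingredient is that the inclusions $\Psi_{\wh x_n}(R[p_n])\subset\Psi_{\wh y_n}(R[100q_n])$ and $\Psi_{\wh y_n}(R[q_n])\subset\Psi_{\wh x_n}(R[100p_n])$, valid for all $n$, force the inverse branches $g_{\wh x_n}$ and $g_{\wh y_n}$ to \emph{coincide} (by the uniqueness discussed before Lemma~\ref{Lemma-temperedness}). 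Once $g_{\wh x_n}=g_{\wh y_n}$, Lemma~\ref{Lemma-stable-sets}(2) gives the conclusion directly: an element of $W^u(\wh x,Z)$ has $x_0\in\Psi_{\wh y_0}(R[100q_0])$ and $x_{n-1}=g_{\wh x_n}(x_n)=g_{\wh y_n}(x_n)$ for all $n\le 0$, which is precisely the membership condition for $V^u(\wh x,Z')$. You never mention this identification of inverse branches, and without it your route through $V^u(\wh x,Z)\subset V^u(\wh x,Z')$ does not close.
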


\begin{proof}
(1)  Proceed as in \cite[Lemma 10.7]{Sarig-JAMS}.

\medskip
\noindent
(2) By Theorem \ref{Thm-inverse}, we have $\tfrac{p}{q}=e^{\pm\sqrt[3]{\ve}}$
and $(\Psi_{\wh y}^{-1}\circ\Psi_{\wh x})(t)=t+\Delta(t)+\delta$
for $t\in R[10Q_\ve(\wh x)]$, where $\delta\in\R$ with $|\delta|<3q$ and
$\Delta:R[10Q_\ve(\wh x)]\to\R$ with $\Delta(0)=0$ and
$\|d\Delta\|_0<4\sqrt{\ve}$. Let us prove (a). Since $Z\subset \Psi_{\wh x}(R[p])$,
it is enough to show that $\Psi_{\wh x}(R[p])\subset \Psi_{\wh y}(R[100q])$.
Here is the proof: if $x=\Psi_{\wh x}(t)$ with $|t|\leq p$ then $x=\Psi_{\wh y}(t')$ with
$t'=(\Psi_{\wh y}^{-1}\circ\Psi_{\wh x})(t)$, and
$|t'|\leq |t|+|\Delta(t)|+|\delta|\leq (1+4\sqrt{\ve})p+3q\leq [(1+4\sqrt{\ve})e^{\sqrt[3]{\ve}}+3]q<100q$.

\medskip
Now we prove (b). Let $\un v,\un w\in\Sigma^\#$ s.t. $\wh x=\pi(\un v)=\pi(\un w)$, where
$\un v=\{\Psi_{\wh x_n}^{p_n}\}_{n\in\Z}$ and $\un w=\{\Psi_{\wh y_n}^{q_n}\}_{n\in\Z}$
with $\Psi_{\wh x_0}^{p_0}=\Psi_{\wh x}^p$ and $\Psi_{\wh y_0}^{q_0}=\Psi_{\wh y}^q$.
Proceeding as in the last paragraph, Theorem \ref{Thm-inverse} implies that
$\Psi_{\wh x_n}(R[p_n])\subset \Psi_{\wh y_n}(R[100q_n])$ and
$\Psi_{\wh y_n}(R[q_n])\subset \Psi_{\wh x_n}(R[100p_n])$ for all $n\in\Z$. By the discussion before
Lemma \ref{Lemma-temperedness}, we get that $g_{\wh x_n}$ and $g_{\wh y_n}$
are the same inverse branch of $f$, for every $n\in\Z$. Since
\begin{align*}
&W^u(\wh x,Z)\subset\{(x_n)_{n\in\Z}\in\wh M:x_0\in \Psi_{\wh x_0}(R[p_0])\text{ and }
x_{n-1}=g_{\wh x_n}(x_n),\forall n\leq 0\},\\
&V^u(\wh x,Z')=\{(x_n)_{n\in\Z}\in\wh M:x_0\in \Psi_{\wh y_0}(R[100q_0])\text{ and }
x_{n-1}=g_{\wh y_n}(x_n),\forall n\leq 0\}
\end{align*}
and $ \Psi_{\wh x_0}(R[p_0])\subset \Psi_{\wh y_0}(R[100q_0])$, it follows that
$W^u(\wh x,Z)\subset V^u(\wh x,Z')$. The other inclusion is actually an equality:
$W^s(\wh x,Z)=V^s(\wh x,Z)=\vt^{-1}[\vt[\wh x]]=V^s(\wh x,Z')$.

\medskip
Now we prove (c). Write $V^s(\wh z,Z)=\vt^{-1}[z]$ and
$V^u(\wh w,Z')=V^u[\{\Psi_{\wh y_n}^{100q_n}\}_{n\leq 0}]$, where $\Psi_{\wh y_0}^{q_0}=\Psi_{\wh y}^q$.
Define $(x_n)_{n\in\Z}$ by $x_n=f^n(z)$ for $n\geq 0$ and $x_{n-1}=g_{\wh y_n}(x_n)$ for $n\leq 0$.
We have $(x_n)_{n\in\Z}\in V^s(\wh z,Z)$. Since $z\in\Psi_{\wh x}(R[p])\subset \Psi_{\wh y}(R[100q])$,
we also have that $(x_n)_{n\in\Z}\in V^u(\wh w,Z')$, hence $V^s(\wh z,Z)\cap V^u(\wh w,Z')$
contains $(x_n)_{n\in\Z}$. Clearly, any element in this intersection must be equal to $(x_n)_{n\in\Z}$.
\end{proof}


\medskip
The next step is to apply a refinement method to destroy non-trivial intersections in $\mathfs Z$. 
The result is a pairwise disjoint cover of ${\rm NUE}_\chi^\#$ with a (geometrical) Markov property.
This idea, originally developed by Sina{\u\i} and Bowen
for finite covers \cite{Sinai-Construction-of-MP,Sinai-MP-U-diffeomorphisms,Bowen-LNM},
works equally well for locally finite countable covers \cite{Sarig-JAMS}.
Let $\mathfs Z=\{Z_1,Z_2,\ldots\}$.

\medskip
\noindent
{\sc The Markov partition $\mathfs R$:} For every $Z_i,Z_j\in\mathfs Z$, define a partition of $Z_i$ by:
\begin{align*}
T_{ij}^{su}&=\{\wh x\in Z_i: W^s(\wh x,Z_i)\cap Z_j\neq\emptyset,
W^u(\wh x,Z_i)\cap Z_j\neq\emptyset\}\\
T_{ij}^{s\emptyset}&=\{\wh x\in Z_i: W^s(\wh x,Z_i)\cap Z_j\neq\emptyset,
W^u(\wh x,Z_i)\cap Z_j=\emptyset\}\\
T_{ij}^{\emptyset u}&=\{\wh x\in Z_i: W^s(\wh x,Z_i)\cap Z_j=\emptyset,
W^u(\wh x,Z_i)\cap Z_j\neq\emptyset\}\\
T_{ij}^{\emptyset\emptyset}&=\{\wh x\in Z_i: W^s(\wh x,Z_i)\cap Z_j=\emptyset,
W^u(\wh x,Z_i)\cap Z_j=\emptyset\}.
\end{align*}
Let $\mathfs T:=\{T_{ij}^{\alpha\beta}:i,j\geq 1,\alpha\in\{s,\emptyset\},\beta\in\{u,\emptyset\}\}$,
and let $\mathfs R$ be the partition generated by $\mathfs T$.


\medskip
Since $T_{ii}^{su}=Z_i$, $\mathfs R$ is a pairwise disjoint cover of ${\rm NUE}_\chi^\#$.
Clearly, $\mathfs R$ is finer than $\mathfs Z$. The local finiteness of $\mathfs Z$ (Proposition \ref{Prop-Z}(2))
implies two local finiteness properties for $\mathfs R$:
\begin{enumerate}[$\circ$]
\item For every $Z\in\mathfs Z$, $\#\{R\in\mathfs R:R\subset Z\}<\infty$.
\item For every $R\in\mathfs R$, $\#\{Z\in\mathfs Z:Z\supset R\}<\infty$.
\end{enumerate}

\medskip
Now we show that $\mathfs R$ is a Markov partition in the sense of Sina{\u\i} \cite{Sinai-MP-U-diffeomorphisms}. For that, we define $s$/$u$--fibres in $\mathfs R$.

\medskip
\noindent
{\sc $s$/$u$--fibres in $\mathfs R$:} Given $\wh x\in R\in\mathfs R$, we define the {\em $s$--fibre}
and {\em $u$--fibre} of $\wh x$ by:
\begin{align*}
W^s(\wh x,R):=
\bigcap_{T_{ij}^{\alpha\beta}\in\mathfs T\atop{T_{ij}^{\alpha\beta}\supset R}} W^s(\wh x,Z_i)\cap T_{ij}^{\alpha\beta}
\, \text{ and }\, W^u(\wh x,R):=
\bigcap_{T_{ij}^{\alpha\beta}\in\mathfs T\atop{T_{ij}^{\alpha\beta}\supset R}}  W^u(\wh x,Z_i)\cap T_{ij}^{\alpha\beta}.
\end{align*}

It is clear that any two $s$--fibres ($u$--fibres) either coincide or are disjoint.

\begin{proposition}\label{Prop-R}
The following holds for $\ve>0$ small enough.
\begin{enumerate}[{\rm (1)}]
\item {\sc Product structure:} For every $R\in\mathfs R$ and every $\wh x,\wh y\in R$, the intersection
$W^s(\wh x,R)\cap W^u(\wh y,R)$ consists of a single element of $R$. Denote it by $[\wh x,\wh y]$.
\item {\sc Hyperbolicity:} If $\wh z,\wh w\in W^s(\wh x,R)$ then
$d(\wh f^n(\wh z),\wh f^n(\wh w))\xrightarrow[n\to\infty]{}0$, and
if $\wh z,\wh w\in W^u(\wh x,R)$ then $d(\wh f^n(\wh z),\wh f^n(\wh w))\xrightarrow[n\to-\infty]{}0$.
The rates are exponential.
\item {\sc Geometrical Markov property:} Let $R_0,R_1\in\mathfs R$. If $\wh x\in R_0$
and $\wh f(\wh x)\in R_1$ then 
$$
\wh f(W^s(\wh x,R_0))\subset W^s(\wh f(\wh x),R_1)\, \text{ and }\, 
\wh f^{-1}(W^u(\wh f(\wh x),R_1))\subset W^u(\wh x,R_0).
$$
\end{enumerate}
\end{proposition}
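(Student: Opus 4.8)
The plan is to follow the Bowen--Sina{\u\i} refinement argument as implemented in \cite[\S 11]{Sarig-JAMS} and in \cite{Lima-Matheus}, using the product structure of the cover $\mathfs Z$ (Proposition \ref{Prop-Z}), the product structure of its $s/u$--fibres (Proposition \ref{Prop-stable-manifolds}), and the overlapping--charts and compatibility statements of the preceding lemma. The starting point is that each $T_{ij}^{\alpha\beta}$ has \emph{product structure relative to $Z_i$}: whether $W^s(\wh x,Z_i)\cap Z_j\neq\emptyset$ depends only on the $s$--fibre $W^s(\wh x,Z_i)$ (distinct $s$--fibres are disjoint), and whether $W^u(\wh x,Z_i)\cap Z_j\neq\emptyset$ depends only on $W^u(\wh x,Z_i)$. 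Hence if $\wh x,\wh y\in T_{ij}^{\alpha\beta}$ then $[\wh x,\wh y]_{Z_i}$ --- which by Proposition \ref{Prop-Z}(3) has $s$--fibre $W^s(\wh x,Z_i)$ and $u$--fibre $W^u(\wh y,Z_i)$ --- again lies in $T_{ij}^{\alpha\beta}$. Since only finitely many $T_{ij}^{\alpha\beta}$ are relevant to a given $R$ (by local finiteness of $\mathfs Z$, Proposition \ref{Prop-Z}(2), together with $\#\{Z\in\mathfs Z:Z\supset R\}<\infty$), one concludes that each $R\in\mathfs R$ inherits product structure inside any $Z_i\supset R$, and also that $R\subset Z_i$ or $R\cap Z_i=\emptyset$ for every $i$.

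For part (1), fix $\wh x,\wh y\in R$ and $Z_i\supset R$. By the previous paragraph $\wh z:=[\wh x,\wh y]_{Z_i}\in R$, and unwinding the definition of the $\mathfs R$--fibres --- using the overlapping--charts properties ((b), (c) of the preceding lemma) to reconcile the $s/u$--fibres taken in the various $Z\supset R$, and the disjointness of distinct $s$--fibres (resp.\ $u$--fibres) --- gives $\wh z\in W^s(\wh x,R)\cap W^u(\wh y,R)$; the same disjointness yields uniqueness. For part (2), by construction $W^s(\wh x,R)\subset W^s(\wh x,Z_i)=\vt^{-1}[\vt[\wh x]]$, so Proposition \ref{Prop-stable-manifolds}(3) gives $\wh d(\wh f^n(\wh z),\wh f^n(\wh w))=2^{-n}\wh d(\wh z,\wh w)\to 0$ for $\wh z,\wh w\in W^s(\wh x,R)$ as $n\to\infty$; similarly $W^u(\wh x,R)\subset V^u[\{\Psi_{\wh x_n}^{100p_n}\}_{n\le 0}]$, and Proposition \ref{Prop-stable-manifolds}(3)(a) gives $\wh d(\wh f^n(\wh z),\wh f^n(\wh w))\le 200\,p_0\,e^{\chi n/2}\,\wh d(\wh z,\wh w)\to 0$ as $n\to-\infty$. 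Both rates are exponential.

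Part (3) is the main point. Let $\wh x\in R_0$ and $\wh f(\wh x)\in R_1$, and pick $\un v=\{\Psi_{\wh x_n}^{p_n}\}_{n\in\Z}\in\Sigma^\#$ with $\pi(\un v)=\wh x$; then $\sigma(\un v)\in\Sigma^\#$, $\pi(\sigma\un v)=\wh f(\wh x)$, $v_0\overset{\ve}{\leftarrow}v_1$, and (by the last remark of the first paragraph) $R_0\subset Z(v_0)$, $R_1\subset Z(v_1)$. For $\wh y\in W^s(\wh x,R_0)\subset W^s(\wh x,Z(v_0))$, Proposition \ref{Prop-Z}(4) gives $\wh f(\wh y)\in W^s(\wh f(\wh x),Z(v_1))$; it remains to check that $\wh f(\wh y)$ lies in the same $\mathfs R$--atom $R_1$ as $\wh f(\wh x)$, i.e.\ that for every $T_{ij}^{\alpha\beta}\supset R_1$ the points $\wh f(\wh x)$ and $\wh f(\wh y)$ have the same membership status. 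This is carried out exactly as in \cite[\S 11]{Sarig-JAMS}: one combines the compatibility identity $\wh f([\cdot,\cdot]_{Z(v_0)})=[\wh f(\cdot),\wh f(\cdot)]_{Z(v_1)}$ of the preceding lemma, the symbolic Markov property of $\mathfs Z$ (Proposition \ref{Prop-Z}(4)) applied to both $s$-- and $u$--fibres, and the local finiteness of $\mathfs Z$ to handle the finitely many relevant indices $j$. The $u$--fibre inclusion follows symmetrically by running the argument for $\wh f^{-1}$. I expect this last bookkeeping step --- tracking which $T_{ij}^{\alpha\beta}$ contain $R_0$ and $R_1$ and verifying that membership is preserved by $\wh f$ --- to be the only genuine obstacle; the product-structure reduction of the first paragraph and parts (1) and (2) are formal consequences of Propositions \ref{Prop-Z} and \ref{Prop-stable-manifolds}.
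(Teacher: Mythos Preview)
Your proposal is correct and follows exactly the route the paper itself takes: the paper simply states that Propositions 11.5 and 11.7 of \cite{Sarig-JAMS} apply verbatim, and what you have written is a faithful outline of that argument adapted to the present setting (product structure of the $T_{ij}^{\alpha\beta}$, hyperbolicity from Proposition~\ref{Prop-stable-manifolds}(3), and the case-by-case bookkeeping for the geometrical Markov property using compatibility and the symbolic Markov property of $\mathfs Z$). Your honest flag that the $T_{ij}^{\alpha\beta}$--membership verification in part (3) is the only nontrivial step matches the structure of Sarig's original proof; the ingredients you cite (Proposition~\ref{Prop-Z}(4), the compatibility lemma, and the overlapping-charts properties) are precisely what is needed to carry it out.
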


When $M$ is compact and $f$ is a diffeomorphism, this is \cite[Prop. 11.5 and 11.7]{Sarig-JAMS}
and the same proof works in our case.

\subsection{A finite-to-one Markov extension}

We construct a new coding for $\wh f$.
Let $\widehat{\mathfs G}=(\widehat V,\widehat E)$ be the oriented graph with vertex set
$\widehat V=\mathfs R$ and edge set $\widehat E=\{R\to S:R,S\in\mathfs R\text{ s.t. }\wh f(R)\cap S\neq\emptyset\}$,
and let $(\widehat\Sigma,\widehat\sigma)$ be the TMS induced by $\widehat{\mathfs G}$.

\medskip
For $\ell\in\Z$ and a path $R_m\to\cdots\to R_n$ on $\widehat{\mathfs G}$ define
$_\ell[R_m,\ldots,R_n]:=\wh f^{-\ell}(R_m)\cap\cdots\cap\wh f^{-\ell-(n-m)}(R_n)$,
which is the set of points whose itinerary between the iterates $\ell,\ldots,\ell+(n-m)$ visits the rectangles
$R_m,\ldots,R_n$.
The crucial property of these sets is that $_\ell[R_m,\ldots,R_n]\neq\emptyset$.
This follows by induction, using the Markov property of $\mathfs R$ (Proposition \ref{Prop-R}(3)).

\medskip
The map $\pi$ defines similar sets: for $\ell\in\Z$ and a path
$v_m\overset{\ve}{\leftarrow}\cdots\overset{\ve}{\leftarrow}v_n$ on $\Sigma$ let
$
Z_\ell[v_m,\ldots,v_n]:=\{\pi(\un w):\un w\in\Sigma^\#\text{ and }w_\ell=v_m,\ldots,w_{\ell+(n-m)}=v_n\}$.
There is a relation between $\Sigma$ and $\widehat\Sigma$ in terms of these sets.

\begin{lemma}
If $\{R_n\}_{n\in\Z}\in\widehat\Sigma$ then there exists $\{v_n\}_{n\in\Z}\in\Sigma$
s.t. $R_n\subset Z(v_n)$ and $_{-n}[R_{-n},\ldots,R_n]\subset Z_{-n}[v_{-n},\ldots,v_n]$
for all $n\geq 0$.
\end{lemma}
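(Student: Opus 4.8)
The plan is to build the required bi-infinite path $\{v_n\}_{n\in\Z}$ by a compactness argument and then to check the cylinder inclusion by exhibiting, for each fixed $n$, a concrete $\ve$--gpo in $\Sigma^\#$ through $v_{-n},\dots,v_n$ whose image under $\pi$ is the given point, using uniqueness of shadowing.

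\emph{Step 1: construction of $\{v_n\}_{n\in\Z}$.} First I would fix $N\geq 0$ and use the geometrical Markov property of $\mathfs R$ (Proposition~\ref{Prop-R}(3)) to get ${}_{-N}[R_{-N},\ldots,R_N]\neq\emptyset$; pick $\wh x^{(N)}$ in it. Since $\wh x^{(N)}\in R_0$ and each element of $\mathfs R$ is contained in some element of $\mathfs Z\subset\pi[\Sigma^\#]$, write $\wh x^{(N)}=\pi(\un w^{(N)})$ with $\un w^{(N)}=\{w^{(N)}_k\}_{k\in\Z}\in\Sigma^\#$. For $|k|\leq N$ we have $\wh f^k(\wh x^{(N)})=\pi(\sigma^k\un w^{(N)})\in Z(w^{(N)}_k)$ and also $\wh f^k(\wh x^{(N)})\in R_k$, so $R_k\cap Z(w^{(N)}_k)\neq\emptyset$; because $\mathfs R$ is generated by $\mathfs T$ and $T^{su}_{ii}=Z_i$, each $R\in\mathfs R$ is contained in or disjoint from each $Z\in\mathfs Z$, whence $R_k\subset Z(w^{(N)}_k)$. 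By local finiteness of $\mathfs Z$ (Proposition~\ref{Prop-Z}(2)) only finitely many $Z\in\mathfs Z$ contain a given $R_k$, so for each $m$ the central block $(w^{(N)}_{-m},\dots,w^{(N)}_m)$ ranges over a finite set as $N\geq m$ varies. A diagonal (König's lemma) extraction then yields $\{v_n\}_{n\in\Z}$ with $R_n\subset Z(v_n)$ for all $n$, and since each $\un w^{(N)}$ is an honest path, $v_n\overset{\ve}{\leftarrow}v_{n+1}$ for all $n$, i.e. $\{v_n\}_{n\in\Z}\in\Sigma$.

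\emph{Step 2: the cylinder inclusion.} Fix $n\geq 0$ and $\wh y\in{}_{-n}[R_{-n},\ldots,R_n]$, so $\wh f^k(\wh y)\in R_k\subset Z(v_k)$ for $|k|\leq n$. I would produce $\un w\in\Sigma^\#$ with $\pi(\un w)=\wh y$ and $w_{-n}=v_{-n},\dots,w_n=v_n$. Since $\wh f^{-n}(\wh y)\in R_{-n}\subset Z(v_{-n})=\{\pi(\un u):\un u\in\Sigma^\#,\,u_0=v_{-n}\}$, pick $\un u\in\Sigma^\#$ with $u_0=v_{-n}$ and $\pi(\un u)=\wh f^{-n}(\wh y)$; similarly pick $\un u'\in\Sigma^\#$ with $u'_0=v_n$ and $\pi(\un u')=\wh f^{n}(\wh y)$. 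Concatenate: $w_{-n+j}:=u_j$ for $j\leq 0$, $w_k:=v_k$ for $-n\leq k\leq n$, $w_{n+j}:=u'_j$ for $j\geq 0$ (consistent at the two junctions since $u_0=v_{-n}$ and $u'_0=v_n$). Then $\un w$ is a path in $\Sigma$ whose left tail equals that of $\un u$ and whose right tail equals that of $\un u'$, so $\un w\in\Sigma^\#$, and $\wh f^k(\wh y)\in Z(w_k)$ for \emph{every} $k$: for $|k|\leq n$ this is the hypothesis, for $k<-n$ it is $\wh f^{-n+j}(\wh y)=\wh f^{j}(\pi(\un u))=\pi(\sigma^j\un u)\in Z(u_j)$, and symmetrically for $k>n$. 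Writing $w_k=\Psi_{\wh x_k}^{p_k}$ and using $Z(\Psi_{\wh x_k}^{p_k})\subset\Psi_{\wh x_k}(R[p_k])$ (this holds since $\pi(\un w)\in V^u[\{w_m\}_{m\leq 0}]$; cf. Remark~\ref{Remark-explanation-weak-gpo}), one gets $\vt_k[\wh y]=\vt[\wh f^k(\wh y)]\in\Psi_{\wh x_k}(R[p_k])$ for all $k$, i.e. the weak $\ve$--gpo $\un w$ shadows $\wh y$; since it also shadows $\pi(\un w)$, Lemma~\ref{Lemma-shadowing} forces $\wh y=\pi(\un w)$, hence $\wh y\in Z_{-n}[v_{-n},\dots,v_n]$.

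The main obstacle is really Step 1: one has to juggle three ingredients at once — the Markov property of $\mathfs R$ (to keep the cylinders ${}_{-N}[R_{-N},\dots,R_N]$ nonempty), the fact that $\mathfs R$ refines $\mathfs Z$ so that ``$R_k$ meets $Z$'' upgrades to ``$R_k\subset Z$'', and the local finiteness of $\mathfs Z$ (so the branching in the König's lemma tree is finite). Once the path $\{v_n\}$ is in hand, Step 2 is essentially bookkeeping with the definitions of $Z(v)$, $Z_\ell[\,\cdot\,]$ and $\Sigma^\#$, the only real point being the invocation of uniqueness of shadowing.
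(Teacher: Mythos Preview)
Your proposal is correct and follows essentially the same approach as the paper. In Step~1 you identify precisely the point the paper stresses: the diagonal extraction must use that each $R_k$ is contained in only finitely many $Z\in\mathfs Z$ (a consequence of Proposition~\ref{Prop-Z}(2)), rather than finite degree of $\Sigma$, which may fail here. Your Step~2 spells out the concatenation-and-shadowing argument that the paper simply defers to \cite[Lemma~12.2]{Sarig-JAMS}; your use of Lemma~\ref{Lemma-shadowing} to identify $\wh y$ with $\pi(\un w)$ is the right way to close the argument in this setting.
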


\begin{proof}
When $M$ is compact and $f$ is a diffeomorphism, this is \cite[Lemma 12.2]{Sarig-JAMS},
whose proof applies a diagonal argument using the fact that
each vertex of $\Sigma$ has finite ingoing and outgoing degree.
Since our $\Sigma$ does do not necessarily satisfy this finiteness
property (see Remark \ref{Remark-finite-degree}), the same proof does {\em not} work in our case.
Instead, we use the local finiteness of $\mathfs R$ to apply the diagonal argument, as follows.

\medskip
For $n\geq 0$, take $\wh x_n\in{}_{-n}[R_{-n},\ldots,R_n]$, and let
$\un v^{(n)}=\{v^{(n)}_{k}\}_{k\in\Z}\in\Sigma^\#$
s.t. $\pi(\un v^{(n)})=\wh x_n$. As in \cite[Lemma 12.2]{Sarig-JAMS},
$_{-n}[R_{-n},\ldots,R_n]\subset Z_{-n}[v^{(n)}_{-n},\ldots,v^{(n)}_n]$.
Since $R_k$ is included in finitely many elements of $\mathfs Z$,
there are finitely many choices for $(v^{(n)}_{-n},\ldots,v^{(n)}_{n})$. 
By a diagonal argument, there is $\un v$ s.t. for all $n\geq 0$,
$(v_{-n},\ldots,v_{n})=(v^{(m)}_{-n},\ldots,v^{(m)}_n)$ for infinitely many $m$.
Now continue as in \cite[Lemma 12.2]{Sarig-JAMS}.
\end{proof}

By Proposition \ref{Prop-R}(2), $\bigcap_{n\geq 0}\overline{_{-n}[R_{-n},\ldots,R_n]}$
is the intersection of a descending chain of nonempty closed sets with
diameters converging to zero.

\medskip
\noindent
{\sc The map $\widehat\pi:\widehat\Sigma\to \wh M$:} Given $\un R=\{R_n\}_{n\in\Z}\in\widehat\Sigma$,
$\widehat\pi(\un R)$ is defined by the identity
$$
\{\widehat\pi(\un R)\}:=\bigcap_{n\geq 0}\overline{_{-n}[R_{-n},\ldots,R_n]}.
$$

\medskip
The triple $(\widehat\Sigma,\widehat\sigma,\widehat\pi)$ satisfies Theorem \ref{Thm-Main}.

\begin{theorem}\label{Thm-widehat-pi}
The following holds for all $\ve>0$ small enough.
\begin{enumerate}[{\rm (1)}]
\item $\widehat\pi:\widehat\Sigma\to\wh M$ is H\"older continuous.
\item $\widehat\pi\circ\widehat\sigma=\wh f\circ\widehat\pi$.
\item $\widehat\pi[\widehat\Sigma^\#]\supset {\rm NUE}_\chi^\#$.
\item Every point of $\widehat\pi[\widehat\Sigma^\#]$ has finitely many pre-images in $\widehat\Sigma^\#$.
\end{enumerate}
\end{theorem}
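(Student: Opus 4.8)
The plan is to establish the four assertions in the order $(2)$, $(1)$, $(3)$, $(4)$, the last being the only one demanding genuinely new bookkeeping. \emph{Parts $(1)$ and $(2)$.} Both are formal consequences of the properties of the Markov partition $\mathfs R$. For $(2)$ I would match up the nested families of sets defining $\widehat\pi(\widehat\sigma\un R)$ and $\widehat f(\widehat\pi(\un R))$ using the geometrical Markov property (Proposition \ref{Prop-R}(3)), exactly as in \cite[\S 12]{Sarig-JAMS}. For $(1)$, if $\un R,\un S\in\widehat\Sigma$ agree on coordinates $-n,\dots,n$ then $\widehat\pi(\un R)$ and $\widehat\pi(\un S)$ both lie in $\overline{{}_{-n}[R_{-n},\dots,R_n]}$, so it suffices to bound the $\widehat d$--diameter of ${}_{-n}[R_{-n},\dots,R_n]$ by $Ce^{-cn}$. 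Here the zeroth coordinates $y_0,z_0$ of two points of this set, together with their first $n$ forward iterates, stay inside Pesin charts of bounded radius while being expanded by a factor $\ge e^{n\chi/2}$, so $d(y_0,z_0)\lesssim e^{-n\chi/2}$; the uniform contraction of the inverse branches in Pesin charts (Theorem \ref{Thm-non-linear-Pesin}) then propagates this to all negative coordinates, and since $\widehat d$ weighs only the negative coordinates the bound follows. The H\"older exponent depends on $\chi$ and on the constants in the metrics of $\Sigma$ and $\widehat M$. This is the computation of \cite[\S 12]{Sarig-JAMS} transcribed to our setting, resting on Proposition \ref{Prop-R}(2).

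\emph{Part $(3)$.} Given $\widehat x\in{\rm NUE}_\chi^\#$, Theorem \ref{Thm-coarse-graining}(2) and the observation closing its proof produce an $\ve$--gpo $\un v=\{v_n\}_{n\in\Z}\in\Sigma^\#$ shadowing $\widehat x$, so $\pi(\un v)=\widehat x$ by Lemma \ref{Lemma-shadowing}. For each $n$ let $R_n\in\mathfs R$ be the cell containing $\widehat f^n(\widehat x)$ (well defined since $\mathfs R$ covers the $\widehat f$--invariant set ${\rm NUE}_\chi^\#$); since $\mathfs R$ refines $\mathfs Z$ and $\widehat f^n(\widehat x)\in R_n\cap Z(v_n)$, we get $R_n\subset Z(v_n)$. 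Because $\widehat f^{n+1}(\widehat x)\in\widehat f(R_n)\cap R_{n+1}$, the sequence $\un R=\{R_n\}_{n\in\Z}$ belongs to $\widehat\Sigma$, and $\widehat x\in\bigcap_{n\ge0}\overline{{}_{-n}[R_{-n},\dots,R_n]}$ forces $\widehat\pi(\un R)=\widehat x$. Finally $\un v\in\Sigma^\#$ gives a vertex $v$ with $v_n=v$ for infinitely many $n>0$; for those $n$ one has $R_n\subset Z(v)$, and $\#\{R\in\mathfs R:R\subset Z(v)\}<\infty$ forces some cell to recur as $R_n$ infinitely often. Arguing symmetrically for $n<0$ gives $\un R\in\widehat\Sigma^\#$.

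\emph{Part $(4)$.} Here I would follow \cite[\S 12]{Sarig-JAMS} with two substitutions already flagged in the text. First, the passage from $\widehat\Sigma$ back to $\Sigma$ is the Lemma preceding the definition of $\widehat\pi$: given $\widehat x\in\widehat\pi[\widehat\Sigma^\#]$ and $\un R\in\widehat\Sigma^\#$ with $\widehat\pi(\un R)=\widehat x$, it furnishes $\un v\in\Sigma$ with $R_n\subset Z(v_n)$ and $\pi(\un v)=\widehat x$, and an extra bookkeeping step (again as in \cite[\S 12]{Sarig-JAMS}, now using the recurrence of $\un R$, the local finiteness of $\mathfs R$ inside $\mathfs Z$, and the discreteness of $\mathfs A$) lets one take $\un v\in\Sigma^\#$. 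Second, for two codings $\un R,\un S\mapsto\widehat x$ with associated $\un v=\{\Psi_{\widehat x_n}^{p_n}\}_{n\in\Z}$, $\un w=\{\Psi_{\widehat y_n}^{q_n}\}_{n\in\Z}$ in $\Sigma^\#$, the Inverse Theorem \ref{Thm-inverse} gives $p_n/q_n=e^{\pm\sqrt[3]{\ve}}$ and $d(\vt[\widehat x_n],\vt[\widehat y_n])\le2\max\{p_n,q_n\}$ for all $n$. Fixing a vertex $v=\Psi_{\widehat x'}^{p'}$ of $\un v$ that recurs as $n\to+\infty$, at each such time $q_n\ge e^{-\sqrt[3]{\ve}}p'>0$, so by the discreteness of $\mathfs A$ the vertex $w_n$ lies in a finite subfamily of $\mathfs A$ independent of $\un S$; hence $Z(w_n)$ lies in a finite subfamily of $\mathfs Z$, and by local finiteness of $\mathfs R$ in $\mathfs Z$ the cell $S_n=R_n$ lies in a finite subfamily of $\mathfs R$. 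Thus along the infinite recurrence set of $v$ every coding of $\widehat x$ is confined to boundedly many cells, and the pigeonhole/closing argument of \cite[\S 12]{Sarig-JAMS} then shows that $\{\un R\in\widehat\Sigma^\#:\widehat\pi(\un R)=\widehat x\}$ is finite.

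The genuine obstacle, as anticipated in Remark \ref{Remark-finite-degree}, is that $\widehat\Sigma$ is not locally compact: in \cite{Sarig-JAMS,Lima-Sarig,Lima-Matheus} the diagonal argument producing $\un v$ from $\un R$, and the counting following it, exploit the compactness of cylinders of $\Sigma$, which fails here. Every such use of compactness has to be replaced by an appeal to the local finiteness of the Markov partition $\mathfs R$ relative to the locally finite cover $\mathfs Z$ (Proposition \ref{Prop-Z}(2) and its consequences), and one must check throughout that the auxiliary $\ve$--gpos are chosen recurrent so that Theorem \ref{Thm-inverse} applies; away from that point the section is a transcription of the corresponding arguments there.
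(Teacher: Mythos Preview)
Your proposal is correct and follows the same route as the paper, which simply defers to \cite[Thm.~12.5]{Sarig-JAMS} for (1)--(3) and to \cite[Thm.~5.6(4)]{Lima-Sarig} for (4), noting that the only modification needed is the one you isolate: replacing compactness-of-cylinder arguments in $\Sigma$ by the local finiteness of $\mathfs R$ relative to $\mathfs Z$. Your identification of this as the sole genuine obstacle, and your use of the Lemma preceding the definition of $\widehat\pi$ together with Theorem~\ref{Thm-inverse} and discreteness of $\mathfs A$, is exactly the mechanism the paper has in mind; the paper records the resulting bound as $\#\{\un S\in\widehat\Sigma^\#:\widehat\pi(\un S)=\widehat x\}\le N(R)N(S)$ for a function $N:\mathfs R\to\N$, where $R,S$ are vertices recurring in $\un R$ for $n>0$ and $n<0$ respectively.

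One slip to fix: in part~(4) you write ``the cell $S_n=R_n$ lies in a finite subfamily of $\mathfs R$''. The equality $S_n=R_n$ is not available (and would trivialize the statement): since $\widehat\pi$ is defined through closures, $\widehat f^n(\widehat x)$ need only lie in $\overline{R_n}$ and $\overline{S_n}$, not in $R_n\cap S_n$. What your argument actually yields (and what is needed) is that $S_n$, being contained in $Z(w_n)$ with $w_n$ ranging over a finite subset of $\mathfs A$, lies in a finite subfamily of $\mathfs R$ by local finiteness; from there the pigeonhole/closing argument of \cite{Sarig-JAMS,Lima-Sarig} applies. Also make explicit that the bookkeeping producing $\un v\in\Sigma^\#$ from $\un R\in\widehat\Sigma^\#$ must equally be applied to produce $\un w\in\Sigma^\#$ from $\un S$, since Theorem~\ref{Thm-inverse} requires both sequences to be recurrent.
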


In particular, if $\mu$ is an $f$--adapted $\chi$--expanding measure and $\wh\mu$ is
its lift to the natural extension, then $\wh\mu(\wh\pi[\widehat\Sigma^\#])=1$.
When $M$ is compact and $f$ is a diffeomorphism, parts (1)--(3) are \cite[Thm. 12.5]{Sarig-JAMS}
and part (4) is \cite[Thm. 5.6(4)]{Lima-Sarig}.
The same proofs work in our case, and the bound
on the number of pre-images is exactly the same: there is a function 
$N:\mathfs R\to\N$ s.t. if $\wh x=\widehat\pi(\un R)$ with $R_n=R$ for infinitely many $n>0$ and $R_n=S$
for infinitely many $n<0$ then $\#\{\un S\in\widehat\Sigma^\#:\widehat\pi(\un S)=\wh x\}\leq N(R)N(S)$.

\bibliographystyle{alpha}
\bibliography{bibliography}{}

\end{document}